\documentclass[reqno, 11pt]{amsart}
\usepackage{amssymb,amsmath,amsfonts}
\usepackage[margin=1in]{geometry}
\usepackage{dsfont}
\usepackage{color}
\usepackage{graphicx}
\usepackage{latexsym}
\usepackage{amsmath}
\usepackage{amssymb}
\usepackage{graphics}
\usepackage[dvips]{epsfig}
\usepackage{mathrsfs}
\usepackage{mathtools}
\usepackage{leqno}
\usepackage{stmaryrd,cite}

\usepackage{cases}
\usepackage{enumerate}
\usepackage[usenames,dvipsnames,svgnames]{xcolor}
\definecolor{refkey}{rgb}{1,0,0.5}
\definecolor{labelkey}{rgb}{0,0.4,1}
\usepackage{todonotes}
\usepackage{marginnote}

\newcommand{\Om}{\Omega}

\newcommand{\Div}{\operatorname{div}}
\newcommand{\av}[1]{\overline{#1}}
\newcommand{\norm}[2]{\lVert #1\rVert_{#2}}

\newcommand{\GN}{\mathcal N}

\presetkeys{todonotes}{fancyline, color=green!40}{}

\makeatletter
\renewcommand{\@todonotes@drawMarginNoteWithLine}{%
	\begin{tikzpicture}[remember picture, overlay, baseline=-0.75ex]%
		\node [coordinate] (inText) {};%
	\end{tikzpicture}%
	\marginnote[{
		\@todonotes@drawMarginNote%
		\@todonotes@drawLineToLeftMargin%
	}]{
		\@todonotes@drawMarginNote%
		\@todonotes@drawLineToRightMargin%
	}%
}
\makeatother
\usepackage{listings}
\usepackage{ifpdf}
\ifpdf
\usepackage[CJKbookmarks=true,
hyperindex=true,
pdfstartview=FitH,
bookmarksnumbered=true,
bookmarksopen=true,
colorlinks=true,
citecolor=blue,
linkcolor=blue,
urlcolor=blue,
pdfborder=001,
pdfauthor={},
pdftitle={},
pdfkeywords={},
]{hyperref}
\else
\usepackage[
hypertex,
hyperindex,
linkcolor=blue,
unicode,
citecolor=blue%
]{hyperref}
\fi
\usepackage{cleveref}

\allowdisplaybreaks

\numberwithin{equation}{section}

\newtheorem{thm}{Theorem}[section]

\newtheorem{lem}[thm]{Lemma}
\newtheorem{prop}[thm]{Proposition}
\newtheorem{rmk}[thm]{Remark}
\newtheorem{defn}[thm]{Definition}

\newcommand{\pl}{\partial}

\newcommand{\be}{\begin{equation}}
	\newcommand{\ee}{\end{equation}}

\newcommand{\bee}{\begin{equation*}}
	\newcommand{\eee}{\end{equation*}}

\newcommand{\bse}{\begin{subequations}}
	\newcommand{\ese}{\end{subequations}}

\newcommand{\bs}{\begin{split}}
	\newcommand{\es}{\end{split}}

\begin{document}
	\author{Yingzhi Du$^{1}$}\thanks{$^{1}$ School of Mathematics and Statistics, Wuhan University, No. 299, Bayi Road, Wuchang District, Wuhan City, Hubei Province, China, 430072. E-mail: yingzhidu2@gmail.com}
	
	\author{Huijiang Zhao$^{2}$}\thanks{$^{2}$ School of Mathematics and Statistics, Wuhan University, No. 299, Bayi Road, Wuchang District, Wuhan City, Hubei Province, China, 430072. E-mail: hhjjzhao@hotmail.com}

	\title[] {Global Existence of Strong and Weak Solutions to the 2D Compressible Navier-Stokes-Poisson System in Bounded Domains with Large Data and Vacuum}

	\begin{abstract}

			We study the two-dimensional compressible Navier--Stokes--Poisson system in a general simply connected bounded $C^{2,1}$ domain with Navier--slip boundary conditions and a positive, possibly non-flat, doping profile. We establish the global existence of a unique strong solution and of a global weak solution for arbitrarily large initial data allowing vacuum when the shear viscosity is a positive constant and the bulk one proportional to a power of the density with the power bigger than $\frac{4}{3}$.
			
			The main difficulties arise from the interaction between the Poisson force and the general boundary geometry. To overcome them, we introduce the modified material derivative
			$$
			\widetilde{u}:=\dot{u}-\nabla\Phi:= \pl_t u + (u \cdot \nabla)u - \nabla \Phi,
			$$
			and derive quantitative $L^p$ estimates for the density by means of a Neumann duality argument together with a harmonic correction generated by the Navier--slip boundary condition. We further obtain a pointwise representation of the effective viscous flux by pulling back the Neumann Green function on the unit disk through a conformal mapping. The resulting commutator estimates, combined with elliptic estimates for the Poisson equation, yield a uniform-in-time upper bound for the density and allow us to close the higher-order a priori estimates. Finally, the global weak solution is constructed through a charge-preserving approximation and compactness argument.\\

		\noindent {\bf Keywords}: Compressible Navier--Stokes--Poisson system, Navier-slip boundary condition, Non-flat doping profile, Vacuum, Global strong solution\\
		{\bf AMS Subject Classifications.} 35Q35, 76N10, 35A01
	
	\end{abstract}
	
	\maketitle

	\section{Introduction}

	The compressible Navier-Stokes-Poisson (NSP) system consists of the Navier-Stokes equations coupled with the self-consistent Poisson equation and is used to model the motion of charged particles (electrons or holes; see \cite{[23]} for further details). In two spatial dimensions, the NSP system for a single carrier type takes the form
	\begin{equation}\label{origin}
		\left\{
		\begin{array}{lll}
			\rho_{t}+\mbox{div}(\rho u)=0,\\[2mm]
			\rho\left(u_{t}+(u\cdot\nabla)u\right)+\nabla P=\mu\Delta u+\nabla((\mu+\lambda(\rho))\mbox{div}u)+\rho\nabla\Phi,\\[2mm]
			\Delta\Phi=\rho-b(x),
		\end{array}
		\right.
	\end{equation}
	where $\rho$, $u=(u_1(x,t),u_2(x,t))$ and $P$ denote the density, velocity field and pressure of the charged particles, respectively.
	The self-consistent electric potential $\Phi=\Phi(x,t)$ is coupled with the density through the Poisson equation.
	The pressure $P$ is given by
	$$P(\rho)=a_1\rho^{\gamma},$$
	where $a_1>0$ and $\gamma\geq1$ are constants. We impose the following assumptions on the shear viscosity coefficient $\mu$ and the bulk viscosity coefficient $\lambda(\rho)$:
	\begin{equation}
		0<\mu=\mbox{constant},\quad \lambda(\rho)=a_2 \rho^{\beta},\nonumber
	\end{equation}
	with positive constants $a_2$ and $\beta$. We set $a_1=a_2=1$ without loss of generality.
	The background profile $b=b(x)>0$ is prescribed and represents the sum of the background ion density and the net density of impurities.

	The aim of this paper is to investigate the global existence of solutions to the initial boundary value (IBV) problem for (\ref{origin}) in $(t,x)\in [0,+\infty)\times\Omega$, where $\Omega \subset \mathbb{R}^2$ is a simply connected bounded $C^{2,1}$-domain. The system is supplemented with the initial data
	\begin{equation}\label{initial}
		\rho(x,0)=\rho_0(x), \quad \rho u(x,0)=m_0(x), \quad \Delta \Phi(x,0)= \rho_0(x)-b(x) , \quad \int_{\Omega} \rho_0 \mbox{d}x = \int_{\Omega} b(x) \mbox{d}x, \quad x \in \Omega,
	\end{equation}
	and the Navier-slip boundary conditions
	\begin{equation}\label{boundary1}
		u\cdot n=0,\quad \mbox{curl}u = -A u  \cdot n^{\bot},\quad \mbox{on}\quad \partial\Omega,
	\end{equation}
	and
	\begin{equation}\label{boundary2}
		\nabla\Phi\cdot n\mid_{\partial\Omega}=0,
	\end{equation}
	where $A$ is a non-negative smooth function on the boundary and $n=(n_1,n_2)$ denotes the outward unit normal to $\partial\Omega$. Throughout the paper, all generic constants may depend on the fixed boundary coefficient $A$, on $n$, and on a fixed smooth extension of $(A,n,n^{\bot})$.
	The unit tangential vector $n^{\bot}$ on the boundary $\pl \Omega$ is defined by
	$$n^{\bot}:=(n_2,-n_1).$$
	We may smoothly extend $n$, $n^{\bot}$ and $A$ to the entire domain. These extensions are not unique, and we fix one choice throughout the paper.

	
	There is an extensive literature on the well-posedness theory for the
	multidimensional compressible Navier--Stokes equations. The local
	well-posedness theory for sufficiently regular solutions can be traced
	back to the classical works of Nash \cite{Nash1962} and Serrin
	\cite{Serrin1959}. The first global existence result for classical
	solutions near a non-vacuum equilibrium was established by
	Matsumura and Nishida \cite{MatsumuraNishida1980}. Hoff
	\cite{Hoff1995,Hoff2005} subsequently developed a theory allowing
	rougher initial data and introduced a number of important estimates
	based on the material derivative. For global weak solutions, a major
	breakthrough was achieved by Lions \cite{Lions1998}, whose restriction
	on the adiabatic exponent was later improved by Feireisl, Novotn\'y
	and Petzeltov\'a \cite{FeireislNovotnyPetzeltova2001}. For three-dimensional
	flows with vacuum and large oscillations, Huang, Li and Xin
	\cite{HuangLiXin2012} established global classical solutions under a
	smallness assumption on the initial energy.
	
	In contrast to the small-energy theory, global solvability for
	arbitrarily large initial data is much more delicate. A remarkable
	class of results is available in two space dimensions when the bulk
	viscosity depends on the density. More precisely, for
	\[
	\mu>0,\qquad \lambda(\rho)=\rho^\beta,
	\]
	Vaigant and Kazhikhov \cite{VK1995} first obtained the
	global existence of strong solutions for large initial data under the
	condition $\beta>3$, with the density initially bounded away from
	vacuum. Their result initiated the large-data theory for the
	two-dimensional density-dependent viscosity model. Related global
	weak solutions for the two-dimensional periodic problem were obtained
	by Perepelitsa \cite{P2006}, who also derived uniform upper and lower
	bounds for the density under the assumption that the initial density
	is bounded away from vacuum. Subsequently, Jiu, Wang and Xin
	\cite{JiuWangXin2014,JiuWangXin2018} extended the theory to allow
	vacuum states in the periodic problem and in the Cauchy problem. By developing refined estimates for the effective viscous
	flux and exploiting the commutator structure associated with the
	density equation, Huang and Li \cite{HL2016,HL2022} further reduced
	the restriction on the bulk viscosity exponent to
	\[
	\beta>\frac{4}{3}
	\]
	for the periodic problem and the Cauchy problem.
	
	The presence of a physical boundary introduces substantial additional
	difficulties, since the singular-integral structure available on
	$\mathbb R^2$ or $\mathbb T^2$ is no longer directly applicable.
	A fundamental advance in this direction was made by Fan, Li and Li
	\cite{FLL2022}, who considered the two-dimensional compressible
	Navier--Stokes equations in a general bounded simply connected domain
	subject to the Navier-slip boundary condition. They established the
	global existence of both strong and weak solutions with vacuum and
	arbitrarily large initial data for $\beta>4/3$. A key ingredient of
	their analysis is a pointwise representation of the effective viscous
	flux. By means of the Riemann mapping theorem and a pull-back of the
	Neumann Green function from the unit disc, the singular terms can be
	reduced to a commutator-type form, for which the boundary singularities
	can be controlled by suitable estimates of $\nabla u$. Their work
	provides the main analytical framework that motivates the present
	paper.
	
	
	We now turn to the compressible Navier--Stokes--Poisson system. The
	coupling with the Poisson equation introduces a nonlocal force and
	creates additional difficulties which do not occur in the usual
	compressible Navier--Stokes equations. Local solvability has been
	studied in several settings. For instance, Zhang and Tan
	\cite{ZhangTan2007} considered solutions to a two-dimensional
	compressible Navier--Stokes--Poisson system, while Tan and Zhang
	\cite{TanZhang2010} proved the local existence and uniqueness of
	strong solutions to the three-dimensional isentropic system, allowing
	the initial density to contain vacuum. Local strong solutions to the
	full Navier--Stokes--Poisson system with vacuum were also studied by
	Yin and Tan \cite{YinTan2009}.
	
	A considerable amount of work has also been devoted to global
	solutions. An early global weak-solution result for the Cauchy problem
	was obtained by Donatelli \cite{Donatelli2003}, where compactness
	properties induced by the Poisson equation play an important role.
	Further global weak and strong solutions, as well as their large-time
	behavior, have been investigated under various assumptions on the
	pressure, viscosity coefficients and doping profile; see, for example,
	\cite{DucometNecasovaVasseur2010,YeDou2017,LiuXuZhang2020,LiuXuZhangCorr2021}
	and the references therein. Of particular relevance here, Liu, Xu and Zhang
	\cite{LiuXuZhang2020,LiuXuZhangCorr2021} considered the three-dimensional Cauchy problem
	with a large and non-flat doping profile and established global strong
	solutions with vacuum and large oscillations, provided that the
	initial energy is sufficiently small.
	
	The initial-boundary value problem is even more sensitive to the
	interaction between the Poisson force and the boundary geometry.
	In two dimensions, Li, Zhou and Liao \cite{LiZhouLiao2018} studied
	the Navier--Stokes--Poisson system in a bounded domain in connection
	with the zero-electron-mass limit and obtained global regular solutions
	under well-prepared assumptions. In three-dimensional bounded domains, global solutions near a
	non-constant steady state were obtained by Liu and Zhong
	\cite{LiuZhong2021}. Liu, Si and Yu \cite{LiuSiYu2023} subsequently
	established global strong solutions with slip boundary conditions and
	vacuum under a small-initial-energy assumption. Chen, Huang and Shi \cite{ChenHuangShi2024} subsequently
	established global classical solutions in three-dimensional bounded
	domains with slip boundary conditions, allowing a large and non-flat
	doping profile, large oscillations and vacuum, again under a
	small-energy condition. Related global solutions near a non-constant
	steady state were obtained in \cite{Wu2025}.
	
	More recently, progress has also been made toward genuinely large-data
	problems. For the three-dimensional Cauchy problem with
	density-dependent viscosities, global strong solutions with
	arbitrarily large initial data were established under a sufficiently
	large doping profile in \cite{NSPLargeData2025}. Global classical
	solutions for arbitrarily large data have also been obtained on solid
	balls under spherical symmetry and a different density-dependent
	viscosity structure in \cite{FanHuangLei2026}. These results indicate
	that the large-data theory for the Navier--Stokes--Poisson system is
	strongly dependent on both the geometry and the structure of the
	viscosity coefficients.
	
	To the best of our knowledge, however, there has been no result on
	global strong and weak solutions with arbitrarily large initial data
	and vacuum for the two-dimensional compressible Navier--Stokes--Poisson
	system in a general bounded simply connected domain under the
	Navier-slip boundary condition, when
	\[
	\mu>0,\qquad
	\lambda(\rho)=\rho^\beta,\qquad
	\beta>\frac{4}{3},
	\]
	and the doping profile is allowed to be non-flat. The purpose of the
	present paper is to fill this gap.
	
	
	Our analysis is inspired by the large-data theory developed for the
	compressible Navier--Stokes equations in
	\cite{VK1995,HL2016,FLL2022}. Nevertheless, the Poisson
	coupling and the general boundary geometry create several new
	difficulties. We describe the main points below.
	
	First, the electrostatic force $\rho\nabla\Phi$ enters directly into
	the effective viscous flux equation. It is therefore convenient to
	introduce the modified material derivative
	\[
	\widetilde u:=\dot u-\nabla\Phi := \pl_t u + (u \cdot \nabla) u - \nabla \Phi.
	\]
	With
	\[
	F=(2\mu+\lambda(\rho))\mbox{div} u-P,
	\qquad
	\omega=\mbox{curl} u,
	\]
	the momentum equation can be written in the form
	\[
	\rho\widetilde u=\nabla F+\mu\nabla^\perp\omega.
	\]
	Although this decomposition resembles the one for the usual
	compressible Navier--Stokes system, the Poisson force produces
	additional terms at several stages of the argument. In particular,
	the pointwise representation of $F$ contains an additional electric
	contribution involving $\rho\nabla\Phi$, and the lower- and
	higher-order estimates have to be reformulated in terms of
	$\widetilde u$. The Neumann condition for the electric potential is
	also coupled with the Navier-slip boundary condition in the estimates
	of the boundary terms.
	
	Second, an important ingredient of our proof is a self-contained
	derivation of the $L^p$ estimate for the density in Lemma \ref{lem2}.
	This issue is particularly delicate in a general curved bounded
	domain. To explain the difficulty, set
	\[
	G=\rho u\otimes u.
	\]
	The natural elliptic problem associated with
	$\mbox{div} \mbox{div} G$ is
	\[
	\Delta\eta=\mbox{div} \mbox{div} G,
	\qquad
	\partial_n\eta=(\mbox{div} G)\cdot n,
	\]
	and the normal derivative is in general non-zero on a curved
	boundary. Thus the corresponding elliptic operator cannot simply be
	treated as a homogeneous Neumann inverse.
	
	To overcome this difficulty, we introduce the zero-mean Neumann
	operator $\mathcal N$ by
	\[
	\Delta\mathcal Nf=f-\overline f,
	\qquad
	\partial_n\mathcal Nf=0,
	\qquad
	\int_\Omega\mathcal Nf\,dx=0,
	\]
	and use a duality argument to obtain the quantitative estimate
	\[
	\|\eta\|_{L^p}
	\le Cp\|\rho|u|^2\|_{L^p}.
	\]
	At the same time, the difference between the natural normal derivative
	of $\eta$ and that of the effective viscous flux gives rise to a
	non-trivial harmonic correction. More precisely, we introduce the
	zero-mean harmonic function $h$ satisfying
	\[
	\Delta h=0,
	\qquad
	\partial_nh
	=\mu\partial_\tau\big(A(u\cdot\tau)\big),
	\]
	and prove the estimate
	\[
	\|h\|_{L^p}
	\le C\sqrt p\,\|u\|_{H^1}.
	\]
	This construction allows us to recover the density estimate without
	imposing an artificial homogeneous boundary condition on the
	auxiliary elliptic problem. Together with a further auxiliary
	potential associated with the Poisson force, this yields the
	polynomial-in-$p$ estimate required for the subsequent density
	upper bound.
	
	Third, to obtain the uniform upper bound of the density, we adapt the
	pull-back Green function and conformal-mapping argument of
	\cite{FLL2022} to the present Navier--Stokes--Poisson system.
	Besides the commutator term already present in the Navier--Stokes
	equations, the representation of the effective viscous flux now
	contains the additional term
	\[
	J_2(x)
	=
	\int_\Omega
	\nabla_y\widetilde N(x,y)\cdot
	\rho(y)\nabla\Phi(y)\,dy.
	\]
	Moreover, for the general Navier-slip condition
	\[
	\mbox{curl} u=-A u\cdot n^\perp
	\quad\text{on }\partial\Omega,
	\]
	the Neumann boundary condition for $F$ contains the tangential
	boundary contribution
	\[
	-\mu\partial_\tau\big(Au_\tau\big).
	\]
	Both contributions have to be retained in the Green representation
	and estimated together with the usual commutator terms. After combining
	the elliptic estimates for the Poisson equation, the higher
	integrability of the density and the boundary estimates, we obtain
	the crucial inequality
	\[
	R_T^\beta
	\le
	C(\varepsilon)
	R_T^{\max\left\{
		1+\frac{\beta}{4}+3\varepsilon,
		\frac{2+\beta}{3},
		1+\varepsilon
		\right\}}.
	\]
	Since $\beta>4/3$, choosing $\varepsilon>0$ sufficiently small closes
	the estimate and gives a uniform upper bound for the density.
	
	Finally, the Poisson coupling also requires some modifications in the
	continuation and compactness arguments. In particular, the
	approximation of the initial density has to preserve the Neumann
	compatibility condition
	\[
	\int_\Omega\rho_0\,dx
	=
	\int_\Omega b(x)\,dx.
	\]
	We therefore use charge-preserving positive approximations and exploit
	the continuity of the Neumann solution operator to pass to the limit
	in the nonlinear electric force $\rho\nabla\Phi$. With these
	additional ingredients, the lower- and higher-order estimates can be
	closed globally in time, and the global strong and weak solutions
	stated in Theorem \ref{thm1} and Theorem \ref{thm2} follow.

		Before stating the main results, we introduce the notation and conventions used throughout this paper. For a positive integer $k$ and $1 \leq p < \infty$, we use the following notation for the standard $L^p$ spaces and Sobolev spaces:
	\begin{equation}
		\left\{
		\begin{array}{lll}
			L^p=L^p(\Omega), \quad W^{k,p}=W^{k,p}(\Omega), \quad H^k=W^{k,2}(\Omega),\\[2mm]
			\|f\|_{L^p}= \|f\|_{L^p(\Omega)}, \quad \|f\|_{W^{k,p}}=\|f\|_{W^{k,p}(\Omega)}, \quad \|f\|_{H^k}=\|f\|_{W^{k,2}(\Omega)},\\[2mm]
			\tilde{H}^1= \{ u \in H^1(\Omega) |  u\cdot n=0,\quad \mbox{curl}u = -A u \cdot n^{\bot},\quad \mbox{on}\quad \partial\Omega\}, \\[2mm]
			\tilde{W}^{2,q} = \{ u \in W^{2,q}(\Omega) | u \cdot n =0, \quad \mbox{on} \quad \pl \Omega \}.
		\end{array}
		\right.
	\end{equation}
	
	The material derivative and the transpose gradient are given by
	$$\frac{D}{Dt} f = \dot{f} := \frac{\pl}{\pl_t}f + u \cdot \nabla f, \quad \nabla^{\bot}= ( \pl_2, -\pl_1).$$

	We first define weak and strong solutions as follows:
	
	\begin{defn}\label{wandssolution}
		Let $T>0$ be a finite constant. A solution $(\rho,u, \Phi)$ to \eqref{origin} is called a weak solution if it satisfies \eqref{origin} in the sense of distributions. Moreover, if
		all the derivatives involved in \eqref{origin} are regular distributions and \eqref{origin} holds almost everywhere in $\Omega \times (0,T)$, the solution is called a strong solution.
	\end{defn}

	We state the main result concerning the global existence of strong solutions as follows:

	\begin{thm}\label{thm1}
		Let $\Omega$ be a simply connected bounded domain in $\mathbb{R}^2$ with $C^{2,1}$ boundary $\pl \Omega$. Assume that
		\begin{align}\label{betagamma}
			\beta > \frac{4}{3}, \quad \gamma >1,
		\end{align}
		and that the initial data $\rho_0 \geq 0$, $m_0$ and $\nabla \Phi_0$ satisfy, for some $q>2$,
		\begin{align}\label{initialcondition}
			\rho_0 \in W^{1,q}, \quad u_0 \in \tilde{H}^1, \quad \nabla \Phi_0 \in \tilde{W}^{2,q}, \quad m_0=\rho_0 u_0, \quad \Delta\Phi_0=\rho_0-b(x),
		\end{align}
		with $b(x) \in W^{1,q}$.
		
		Then the problem \eqref{origin}-\eqref{boundary2} has a unique strong solution $(\rho ,u,\nabla \Phi)$ in $\Omega \times (0,\infty)$ satisfying, for any $0<T< \infty$,
		\begin{equation}\label{theorem1}
			\left\{
			\begin{array}{lll}
				\rho \in C([0,T]; W^{1,q}), \quad \rho_{t} \in L^{\infty}(0,T;L^2),\\[2mm]
				\nabla \Phi \in C([0,T]; W^{2,q}), \quad \nabla \Phi_{t} \in L^{\infty}(0,T;H^1),\\[2mm]
				u \in L^{\infty}(0,T;H^1) \cap L^{1+\frac{1}{q}}(0,T; W^{2,q}),\\[2mm]
				\sqrt{t}u \in L^{\infty}(0,T;H^2) \cap L^{2}(0,T; W^{2,q}), \quad \sqrt{t}u_t \in L^2(0,T;H^1),\\[2mm]
				\rho u \in C([0,T];L^2), \quad \sqrt{\rho}u_t \in L^2((0,T)\times \Omega).
			\end{array}
			\right.
		\end{equation}

	\end{thm}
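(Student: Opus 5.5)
The proof follows the standard continuation scheme of \cite{VK1995,HL2016,FLL2022}, adapted to the Poisson coupling. First, I establish local existence and uniqueness of a strong solution in the class \eqref{theorem1} on some $[0,T_*]$. I do this for initial densities bounded below, $\rho_0^\delta=\rho_0+\delta$, renormalized so that the compatibility $\int_\Omega\rho_0^\delta\,dx=\int_\Omega b\,dx$ still holds; this is the charge-preserving approximation. I then pass to $\delta\to0$ using bounds that are uniform in $\delta$. Since $\nabla\Phi$ is determined from $\rho$ by the Neumann operator $\mathcal N$, that is $\nabla\Phi=\nabla\mathcal N(\rho-b)$, the system is a compressible Navier--Stokes system with a lower-order nonlocal forcing. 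The local theory, for instance a linearization and fixed point as in Tan--Zhang \cite{TanZhang2010}, then goes through. Global existence reduces to showing that on any $[0,T]$ the norms in \eqref{theorem1} are bounded by a constant depending only on $T$ and the data. The key quantity is $\sup_{t\le T}\|\rho\|_{L^\infty}$.

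The a priori estimates come in the following order.
\begin{enumerate}
\item \textbf{Basic energy.} The energy
\[
\int_\Omega\Big(\tfrac12\rho|u|^2+G(\rho)+\tfrac12|\nabla\Phi|^2\Big)dx+\int_0^T\!\!\int_\Omega\big(\mu\omega^2+(2\mu+\lambda)(\mbox{div}u)^2\big)\,dx\,dt+\int_0^T\!\!\int_{\partial\Omega}\mu A|u|^2
\]
is bounded. To get this, test the momentum equation with $u$ and use $\int\rho u\cdot\nabla\Phi=-\int\Phi\,\mbox{div}(\rho u)=\frac{d}{dt}\frac12\int|\nabla\Phi|^2$, where the boundary conditions \eqref{boundary1}--\eqref{boundary2} kill the boundary terms.
\item \textbf{$L^p$ bound for $\rho$ with polynomial growth in $p$.} Following the plan announced in the introduction, I write the momentum equation in conservative form and apply the operator $\mathcal N\,\mbox{div}$. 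I use duality to get $\|\eta\|_{L^p}\le Cp\|\rho|u|^2\|_{L^p}$. The boundary term from the Navier-slip condition is absorbed by the harmonic correction $h$, with $\|h\|_{L^p}\le C\sqrt p\|u\|_{H^1}$, and the Poisson force is handled by an auxiliary potential $\mathcal N\,\mbox{div}(\rho\nabla\Phi)$. Multiplying the transport equation for $\theta(\rho)=\int_1^\rho\frac{2\mu+\lambda(s)}{s}ds$ by $\rho^{p}$ then gives a bound on $\|\rho\|_{L^p}$ of the form $Cp^{k}$.
\item \textbf{First-order estimates.} I derive the Hoff-type bounds on $\sup_t\|\nabla u\|_{L^2}^2+\int_0^T\|\sqrt\rho\widetilde u\|_{L^2}^2$ and on $\sup_t\|\sqrt\rho\widetilde u\|_{L^2}^2+\int_0^T\|\nabla\widetilde u\|_{L^2}^2$, working with $\rho\widetilde u=\nabla F+\mu\nabla^\perp\omega$. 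These come with powers of $R_T=1+\sup_t\|\rho\|_{L^\infty}$. Boundary terms are treated via $u=(u\cdot n^\perp)n^\perp$ on $\partial\Omega$, and $\nabla\Phi_t$ is controlled by $\mathcal N\,\mbox{div}(\rho u)$.
\end{enumerate}

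\textbf{The main obstacle is the uniform upper bound of $\rho$.} Along particle paths, $\frac{D}{Dt}\theta(\rho)=-P-F$. I use the pull-back Neumann Green function $\widetilde N$ from the disk via the Riemann map, for which the paper assumes the gradient and commutator bounds. This gives
\[
F(x)=\int\nabla_y\widetilde N\cdot\rho\dot u+J_2+(\text{boundary term from }\mu\partial_\tau(Au_\tau))+\overline F.
\]
The first term splits into $\frac{D}{Dt}\int\nabla_y\widetilde N\cdot\rho u$ plus a commutator $\int\rho u_iu_j\,\partial_i\partial_j\widetilde N$-type term with $\nabla_x\widetilde N\cdot u(x)$. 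Both are bounded by $C R_T^{1+\beta/4+\varepsilon}$-type powers using the estimates from step 3. The new term $J_2$ is bounded by $\|\rho\nabla\Phi\|_{L^p}$ for some $p>2$, and this is $\le CR_T^{\varepsilon}$ because elliptic theory for $\Delta\Phi=\rho-b$ gives $\|\nabla\Phi\|_{W^{1,p}}\le C\|\rho\|_{L^p}$, which step 2 controls. Integrating in time along trajectories yields
\[
R_T^\beta\le C(\varepsilon)\,R_T^{\max\{1+\beta/4+3\varepsilon,\,(2+\beta)/3,\,1+\varepsilon\}},
\]
and since $\beta>4/3$ every exponent on the right is below $\beta$ for small $\varepsilon$. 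This closes the bound.

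With $\rho$ bounded, the higher-order estimates follow. I bound $\nabla\rho$ in $L^q$ via Beale--Kato--Majda-type $\|\nabla u\|_{L^\infty}$ estimates for Lamé systems with slip conditions, and obtain the $t$-weighted $H^2$ and $W^{2,q}$ bounds with $\nabla\Phi\in W^{2,q}$ from Poisson regularity. This gives \eqref{theorem1} uniformly in $\delta$. Letting $\delta\to0$, using the continuity of $\mathcal N$ to pass to the limit in $\rho\nabla\Phi$, and combining with a standard $L^2$-energy uniqueness argument for the difference of two solutions yields Theorem~\ref{thm1}.
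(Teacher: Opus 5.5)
Your overall architecture matches the paper's: energy identity, the $\mathcal N$/$\eta$/$h$/$K$ argument for $\|\rho\|_{L^p}$, the pull-back Green function with the extra term $J_2$ (which is indeed harmless), BKM-type higher-order bounds, charge-preserving approximation and Germain-type uniqueness. The gap is in the step you yourself call the main obstacle. There the exponents $1+\frac{\beta}{4}+3\varepsilon$ and $\frac{2+\beta}{3}$ are asserted; they do not follow from your step 3, and two ingredients are missing.

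\emph{First ingredient: the logarithmic estimate.} The Hoff-type bound is not polynomial in $R_T$. Testing $\rho\tilde u=\nabla F+\mu\nabla^\perp\omega$ with $\tilde u$ gives only $\frac{d}{dt}A_1^2+\frac12A_2^2\le C R_T^{1+\alpha\beta}A_1^4$. With $\int_0^TA_1^2\,dt\le C$, Gronwall then yields only $\sup_t\|\nabla u\|_{L^2}^2\lesssim\exp(CR_T^{1+\alpha\beta})$. What is polynomial is the logarithmic form obtained by dividing by $A_1^2$: $\sup_t\log A_1^2+\int_0^TA_2^2/A_1^2\,dt\le C(\alpha)R_T^{1+\alpha}$ (Proposition \ref{lem6}). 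Every time-integrated term has to be organised around it.
\begin{itemize}
\item The boundary term of the representation is $\frac{1}{2\pi}\int_{\partial\Omega}|\nabla\psi_1|F\,dS$, not $\overline F$. It is handled through $\int_0^T\|F\|_{H^1}dt\lesssim R_T^{1/2}(\int_0^TA_2^2/A_1^2\,dt)^{1/2}(\int_0^TA_1^2\,dt)^{1/2}+C$.
\item In the commutator one uses $\|\nabla u\|_{L^p}\lesssim R_T^{\frac12-\frac1p+\epsilon}A_1(A_2^2/A_1^2)^{\frac12-\frac1p}+R_T^{\epsilon}A_1$ (Lemma \ref{lem5}). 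H\"older in time against $\int_0^TA_1^2$ and $\int_0^TA_2^2/A_1^2$ then forces the companion factor $\int_\Omega|x-y|^{-1-\frac2p}\rho|u|(y)\,dy$ to be bounded by a power of $R_T$ times \emph{exactly} $A_1^{2/p}$. Any surplus $A_1^{\epsilon'}$ cannot be paid for polynomially in $R_T$.
\end{itemize}

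\emph{Second ingredient: higher integrability with an $R_T$-dependent exponent.} Getting exactly $A_1^{2/p}$ with only an $R_T^{\beta/4}$ loss requires $\sup_t\int_\Omega\rho|u|^{2+\nu}dx\le C$ with $\nu=\nu_0R_T^{-\beta/2}$ (Lemma \ref{lem4}). This comes from testing the momentum equation with $|u|^{\nu}u$. The term $\nu\int(2\mu+\lambda)|\mbox{div}u||u|^\nu|\nabla u|$ can only be absorbed when $\nu^2R_T^{\beta}\lesssim1$, which forces this choice of $\nu$.
\begin{itemize}
\item Split at a scale $\delta$ with $\delta^{-\frac2p+\frac{\nu}{2+\nu}}=A_1^{2/p}$. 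Use $\rho u\in L^{2+\nu}$ for $|x-y|>\delta$, and $u\in L^{(1+\epsilon_0)/\epsilon_0}$ with $\epsilon_0\sim\nu$ for $|x-y|<2\delta$. This produces the factor $\epsilon_0^{-1/2}\lesssim R_T^{\beta/4}$, which is exactly where $1+\frac\beta4$ comes from.
\item The same estimate gives $\sup_t|\int_\Omega\nabla_y\tilde N(x,y)\cdot\rho u(y)\,dy|\lesssim\nu^{-\frac{1+\nu}{2+\nu}}R_T^{\frac{1+\nu}{2+\nu}}\lesssim R_T^{\frac{2+\beta}{3}}$. This term is not controlled by your step-3 estimates. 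After integrating $\frac{D}{Dt}$ along a particle path it enters through its endpoint values, so it needs a bound uniform in time rather than a time-integrated one.
\end{itemize}

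Without these two ingredients, the natural bounds carry exponential factors in $R_T$ or surplus powers of $A_1$. The inequality $R_T^\beta\le C(\varepsilon)R_T^{\max\{\cdots\}}$ is then not reached for the full range $\beta>\frac43$.

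A smaller point concerns your step 2. The transported quantity must be $\xi+\theta(\rho)$ with $\Delta\xi=\mbox{div}(\rho u)$ and $\partial_n\xi=0$, because $\xi_t$ is not controlled. The paper tests with $\rho f^{2m-1}$, where $f=(\xi+\theta(\rho))_+$, not with a power of $\rho$.
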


	The second result concerns the global existence of weak solutions.
	
	\begin{thm}\label{thm2}
		Assume the conditions of Theorem \ref{thm1}, except that the requirements $\rho_0 \in W^{1,q}$ and $\nabla \Phi_0 \in \tilde{W}^{2,q}$ in \eqref{initialcondition} are replaced by $\rho_0 \in L^{\infty}$, and let $\Phi_0$ satisfy $\Delta \Phi_0 = \rho_0 - b(x)$ with $(\pl_n \Phi_0)|_{\pl \Omega}=0$. Then there exists at least one weak solution $(\rho ,u, \nabla \Phi)$ to the problem \eqref{origin}-\eqref{boundary2} in $\Omega \times (0,\infty)$ satisfying, for any $0<\tau \leq T< \infty$ and $p \geq 1$,
		\begin{equation}\label{theorem2}
			\left\{
			\begin{array}{lll}
				\rho \in L^{\infty}(0,T;L^{\infty}) \cap C([0,T];L^p), \quad \nabla \Phi \in L^{\infty}(0,T;L^{\infty}) \cap C([0,T];W^{1,p}) \\[2mm]
				u \in L^{\infty}(0,T;H^1), \quad u_t \in L^2(\tau,T; L^{2}), \quad \nabla u \in L^{\infty}(\tau,T; L^{p})\\[2mm]
				\mbox{curl} u, (2\mu + \lambda)\mbox{div}u - P \in  L^{2}(0,T;H^1) \cap L^{\frac{3}{2}}(0,T; L^{\infty}).
			\end{array}
			\right.
		\end{equation}

		Moreover, there exists a positive constant $C$ depending on $\Omega,T,\mu,\beta, \gamma,\|b(x)\|_{W^{1,q}}, \|\rho_0\|_{L^{\infty}}$, and $\|u_0\|_{H^1}$ such that
		\begin{equation}\label{infrho}
			\inf_{(x,t)\in \Omega \times (0,T)} \rho(x,t) \geq C^{-1} \inf_{x\in \Omega } \rho_0(x).
		\end{equation}
		
	\end{thm}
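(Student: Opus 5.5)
The plan is to build the weak solution of Theorem \ref{thm2} as a limit of the strong solutions from Theorem \ref{thm1}, using a priori bounds that depend only on $\|\rho_0\|_{L^\infty}$, $\|u_0\|_{H^1}$ and $\|b\|_{W^{1,q}}$.

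\textbf{Approximation.} First I regularize the data. I mollify $\rho_0$ and add a small positive constant, then renormalize multiplicatively. Concretely, I take $\rho_0^\delta=c_\delta(j_\delta*\rho_0+\delta)$ with $c_\delta$ chosen so that $\int_\Omega\rho_0^\delta\,dx=\int_\Omega b\,dx$. This choice gives three properties:
\begin{itemize}
\item $\rho_0^\delta\in W^{1,q}$ and $\rho_0^\delta\ge C^{-1}(\inf\rho_0+\delta)$;
\item $\|\rho_0^\delta\|_{L^\infty}\le C\|\rho_0\|_{L^\infty}+C$;
\item $\rho_0^\delta\to\rho_0$ in every $L^p$, $p<\infty$.
\end{itemize}
I let $\Phi_0^\delta$ be the zero-mean Neumann solution of $\Delta\Phi_0^\delta=\rho_0^\delta-b$. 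Elliptic regularity in the $C^{2,1}$ domain gives $\nabla\Phi_0^\delta\in\tilde W^{2,q}$. I keep $u_0$ fixed, or approximate it in $\tilde H^1$ if a smoother version is needed. Theorem \ref{thm1} then yields global strong solutions $(\rho^\delta,u^\delta,\nabla\Phi^\delta)$.

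\textbf{Uniform estimates.} The key point is that the lower-order estimates behind Theorem \ref{thm1} never use $\|\nabla\rho_0\|_{L^q}$. These are the energy inequality, the $\widetilde u$-based estimates, Lemma \ref{lem2}, and the density upper bound $R_T\le C$ obtained from $R_T^\beta\le C(\varepsilon)R_T^{\max\{\cdots\}}$. They give, uniformly in $\delta$:
\begin{itemize}
\item $\sup_t\|\rho^\delta\|_{L^\infty}\le C$ and $\sup_t\|u^\delta\|_{H^1}\le C$;
\item $\int_0^T\|\sqrt{\rho^\delta}\widetilde u^\delta\|_{L^2}^2\,dt\le C$, and hence $F^\delta,\omega^\delta\in L^2(0,T;H^1)$;
\item time-weighted bounds $\sup_t t\|\sqrt\rho\,\widetilde u\|_{L^2}^2+\int_0^T t\|\nabla\widetilde u\|_{L^2}^2\,dt\le C$. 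These give $\nabla u^\delta\in L^\infty(\tau,T;L^p)$ and $u_t^\delta\in L^2(\tau,T;L^2)$.
\end{itemize}
Since $\rho^\delta$ is bounded, the Poisson equation with $W^{2,p}$ Neumann estimates gives $\nabla\Phi^\delta$ bounded in $L^\infty(0,T;W^{1,p})$, hence in $L^\infty$. The $L^{3/2}(0,T;L^\infty)$ bound on $F$ and $\omega$ should follow, as in \cite{FLL2022}, from Gagliardo–Nirenberg, $\|F\|_{L^\infty}\le C\|F\|_{L^2}^{1/2}\|\nabla F\|_{L^4}^{1/2}+C$, combined with the time weights.

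For the lower bound \eqref{infrho}, I write the continuity equation as
\[
\frac{D}{Dt}\log\rho=-\frac{F+P}{2\mu+\lambda(\rho)}.
\]
Because $2\mu+\lambda\ge2\mu$ and $P\le C$, this gives $\frac{D}{Dt}\log\rho\ge -C(1+\|F\|_{L^\infty})$. Integrating along particle paths and using $F\in L^{3/2}(0,T;L^\infty)\subset L^1(0,T;L^\infty)$ yields $\rho^\delta\ge C^{-1}\inf\rho_0^\delta$. This passes to the limit as $\delta\to0$.

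\textbf{Compactness.} Aubin–Lions gives $u^\delta\to u$ strongly in $C([\tau,T];L^p)$ and in $L^2(0,T;L^p)$. The hard part is strong convergence of $\rho^\delta$, which is needed because $P(\rho)$ and $\lambda(\rho)\mbox{div}u$ are nonlinear. I would follow the Vaigant–Kazhikhov/Perepelitsa route. Let $g(\rho)=\int_1^\rho\frac{2\mu+\lambda(s)}{s}\,ds$; then
\[
\partial_tg(\rho)+u\cdot\nabla g(\rho)=-P-F.
\]
Since $F^\delta$ and $\omega^\delta$ are bounded in $L^2(0,T;H^1)$, they converge strongly in $L^2(0,T;L^2)$, and the weak limit of $(2\mu+\lambda(\rho^\delta))\mbox{div}u^\delta$ can be identified with $\overline{F+P}$. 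Writing the renormalized equation for $\rho\log\rho$ (or for $g(\rho)$) and comparing it with the equation for its weak limit, I obtain the one-sided inequality $\overline{\rho g(\rho)}\le\overline\rho\, g(\overline\rho)$-type, which by monotonicity gives $\overline{\rho\log\rho}=\overline\rho\log\overline\rho$. Hence $\rho^\delta\to\rho$ strongly in $C([0,T];L^p)$. Three ingredients make this work:
\begin{itemize}
\item the initial convergence $\rho_0^\delta\to\rho_0$ in every $L^p$;
\item the boundary term vanishes because $u\cdot n=0$;
\item the term $F\,\mbox{div}u$ is handled through strong convergence of $F$.
\end{itemize}

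\textbf{Passing to the limit.} Strong convergence of $\rho^\delta$ and continuity of the Neumann operator $\mathcal N$ give $\nabla\Phi^\delta\to\nabla\Phi$ in $C([0,T];W^{1,p})$. The constraint $\int\rho^\delta=\int b$ is preserved, so the limit Neumann problem stays compatible. Then $\rho^\delta\nabla\Phi^\delta\to\rho\nabla\Phi$ and $\rho^\delta u^\delta\otimes u^\delta\to\rho u\otimes u$, and the weak formulation, including the Navier-slip condition in weak form, passes to the limit. The main obstacle is the density strong-convergence step: the effective-flux identity must be justified on the curved boundary, and the Poisson force enters $F$ only through $\widetilde u$, so strong compactness of $F$ has to be checked to survive the extra $\rho\nabla\Phi$ term. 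I expect this to go through since $\nabla\Phi^\delta$ is compact.
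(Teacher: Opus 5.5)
Your overall route is the paper's: a charge-preserving positive regularization of the data, global strong solutions from Theorem~\ref{thm1}, and the observation that the estimates of Section~\ref{section3} and Lemmas~\ref{lem12}--\ref{lem14} do not involve $\|\nabla\rho_0\|_{L^q}$. The remaining ingredients also match: the lower bound from $\frac{D}{Dt}\log\rho=-\frac{F+P}{2\mu+\lambda(\rho)}$ with $F\in L^{3/2}(0,T;L^\infty)$, and the limit in $\rho\nabla\Phi$ via the Neumann estimate. The paper additionally regularizes $b$, which is unnecessary since $b\in W^{1,q}$. The paper does not prove the strong convergence of $\rho^\delta$; it simply cites \cite{FLL2022,HL2016}. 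Where you do sketch that step, the mechanism you describe (a one-sided inequality obtained by monotonicity) would not close.

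Let bars denote weak limits and $\rho$ the limit density, and put $h_1(s)=\frac{1}{2\mu+\lambda(s)}$ and $h_2(s)=\frac{P(s)}{2\mu+\lambda(s)}$, so that $\mbox{div}u^\delta=h_1(\rho^\delta)F^\delta+h_2(\rho^\delta)$. Use the strong convergence of $F^\delta$, and the fact that the limit $\rho$ is a renormalized solution by DiPerna--Lions. Then the defect $D:=\overline{\rho\log\rho}-\rho\log\rho\ge0$ satisfies $\partial_tD+\mbox{div}(Du)=-\big(\overline{\rho\,\mbox{div}u}-\rho\,\mbox{div}u\big)$, where
\[
\overline{\rho\,\mbox{div}u}-\rho\,\mbox{div}u
=F\big(\overline{\rho h_1(\rho)}-\rho\,\overline{h_1(\rho)}\big)
+\big(\overline{\rho h_2(\rho)}-\rho\,\overline{h_2(\rho)}\big).
\]
The first bracket is nonpositive because $h_1$ is decreasing, but it is multiplied by the sign-indefinite $F$. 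Moreover, $h_2$ need not be monotone on the range of $\rho$ (e.g.\ when $\beta>\gamma$). So, unlike Lions' constant-$\lambda$ case, there is no sign to exploit.

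What does work is Gronwall. Both brackets are Young-measure covariances of $s$ with functions that are Lipschitz on $[0,R_T]$. They are therefore bounded by $C(\overline{\rho^2}-\rho^2)\le 2CR_T\,D$, using the strong convexity of $s\log s$ on $(0,R_T]$. Integrating over $\Omega$ (the flux vanishes since $u\cdot n=0$) gives
\[
\frac{d}{dt}\int_\Omega D\,dx\le C(1+\|F\|_{L^\infty})\int_\Omega D\,dx .
\]
Since $F\in L^{3/2}(0,T;L^\infty)$ and $D(0)=0$, this forces $D\equiv0$. The closing ingredients are thus the uniform upper bound on $\rho$ and the $L^1_tL^\infty_x$ bound on $F$, not the strong convergence of $F$ alone.

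Separately, a bound in $L^2(0,T;H^1)$ does not by itself give strong convergence of $F^\delta$ in $L^2(0,T;L^2)$; you also need time compactness. On $(\tau,T)$ this comes from the identity \eqref{13-3} for $F_t+u\cdot\nabla F$ together with the weighted bounds of Lemma~\ref{lem13}. On $(0,\tau)$ it is handled by $\sup_t\|F^\delta\|_{L^2}\le C$.
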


	We now briefly describe the main strategy for obtaining the uniform upper
	bound of the density. Following the approach developed by Fan, Li and Li
	\cite{FLL2022}, we introduce
	\[
	\theta(\rho)
	:=2\mu\log\rho+\frac{1}{\beta}\rho^\beta
	\]
	and the effective viscous flux
	\[
	F:=(2\mu+\lambda(\rho))\mbox{div} u-P.
	\]
	By the continuity equation,
	\[
	\frac{D}{Dt}\theta(\rho)+P=-F.
	\]
	Hence, the upper bound of $\rho$ is reduced to obtaining a suitable
	pointwise estimate for $F$. In the present Navier--Stokes--Poisson system,
	$F$ satisfies
	\[
	\Delta F=\mbox{div}\big(\rho(\dot u-\nabla\Phi)\big)
	\]
	together with the corresponding nonhomogeneous Neumann boundary condition
	induced by the Navier-slip boundary condition.
	
	To estimate $F$ in a general bounded simply connected domain, we follow
	the conformal-mapping and Green-function argument introduced in
	\cite{FLL2022}. More precisely, the Neumann Green function on the unit
	disc is pulled back to $\Omega$ through a conformal mapping. The
	preservation of angles under conformal mappings, together with the
	slip boundary condition, allows the singular part of the resulting
	integral representation to be reduced to a commutator-type term.
	This provides the basic mechanism for deriving the uniform upper bound
	of the density. Compared with \cite{FLL2022}, the Poisson coupling generates additional interior and
	boundary terms, whose estimates constitute part of the main new	difficulties of the present paper.
	
	The remainder of the paper is organized as follows.
	Section \ref{section2} collects the preliminary results.
	In Section \ref{section3}, we derive the key a priori estimates leading
	to the uniform upper bound of the density.
	Section \ref{section4} is devoted to the lower- and higher-order estimates.
	Finally, in Section \ref{section5}, we prove Theorems \ref{thm1} and
	\ref{thm2} by approximation, continuation, and compactness arguments.

	\section{Preliminaries}\label{section2}
	
	We first recall the standard local existence theory for initial densities bounded away from vacuum, which forms the basis of our proof of global existence; see \cite{SS1993,S1980}.
	
	\begin{lem}\label{lem-localexistence}
		Assume that $(\rho_0,m_0,\Phi_0)$ satisfies
		\begin{align}\label{initialvalue}
			\rho_0 \in H^{2}, \quad \inf_{x\in \Omega }\rho_0(x)>0, \quad u_0 \in H^2 \cap \tilde{H}^1, \quad \nabla \Phi_0 \in H^3 \cap \tilde{W}^{2,q}, \quad m_0=\rho_0 u_0, \quad \Delta \Phi_0=\rho_0-b(x),
		\end{align}
		with $b(x) \in H^2$.
		
		Then there exist a sufficiently small time $T>0$ and a constant $C_0>0$, both depending only on $\Omega,\mu,\beta,\gamma,\|\rho_0\|_{H^2}$, $\|b(x)\|_{H^2}$, $\|u_0\|_{H^2}$, and $\inf_{x\in \Omega }\rho_0(x)$, such that the problem \eqref{origin}-\eqref{boundary2} has a unique strong solution $(\rho ,u,\nabla \Phi)$ in $\Omega \times (0, T)$ satisfying
		\begin{equation}\label{localexistence}
			\left\{
			\begin{array}{lll}
				\rho \in C([0,T]; H^2), \quad \rho_{t} \in C([0,T];H^1),\\[2mm]
				\nabla \Phi \in C([0,T]; H^3), \quad \nabla \Phi_{t} \in C([0,T];H^2),\\[2mm]
				u \in C([0,T];H^2)\cap L^{2}(0,T;H^3) , \quad u_t \in  C([0,T]; L^2) \cap L^{2}(0,T;H^1),\\[2mm]
				\rho u \in C([0,T];L^2), \quad \sqrt{\rho}u_t \in L^2((0,T)\times \Omega).
			\end{array}
			\right.
		\end{equation}
		and
		\begin{equation}\label{localinf}
			\inf_{(x,t)\in \Omega \times (0,T)} \rho(x,t) \geq C_0 >0.
		\end{equation}
	\end{lem}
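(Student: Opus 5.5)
The proof goes through a standard linearization and contraction scheme, in the spirit of \cite{SS1993,S1980}. The point is to reduce the Poisson coupling to a bounded, Lipschitz perturbation of the compressible Navier--Stokes local theory.

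\emph{Solving the Poisson equation.} For a given density $\rho$ with $\int_\Omega\rho\,dx=\int_\Omega b\,dx$, let $\nabla\Phi=\nabla\mathcal N(\rho-b)$. This is the unique zero-mean solution of $\Delta\Phi=\rho-b$ with $\partial_n\Phi=0$. The compatibility condition is propagated in time, since the continuity equation with $u\cdot n=0$ conserves $\int_\Omega\rho\,dx$. Neumann elliptic regularity on the $C^{2,1}$ domain gives
\[
\|\nabla\Phi\|_{H^3}\le C\|\rho-b\|_{H^2},\qquad \|\nabla\Phi_t\|_{H^2}\le C\|\rho_t\|_{H^1}.
\]
The second bound uses $\Delta\Phi_t=\rho_t=-\operatorname{div}(\rho u)$ and $\partial_n\Phi_t=0$. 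Thus $\nabla\Phi$ is a linear, Lipschitz function of $\rho$.

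\emph{The iteration map.} Fix a pair $(v,\sigma)$ in a ball of the space
\[
\|v\|_{C([0,T];H^2)}+\|v\|_{L^2(0,T;H^3)}+\|v_t\|_{L^2(0,T;H^1)},\qquad \sigma\in C([0,T];H^2).
\]
From this pair, $(\rho,u)$ is produced in three steps.
\begin{enumerate}
\item Solve the transport equation $\rho_t+\operatorname{div}(\rho v)=0$. Since $v\cdot n=0$, no boundary condition is needed. Gronwall gives $H^2$ bounds on $\rho$, and the characteristic representation gives $\rho\ge C_0>0$ on a short time interval.
\item Set $\Phi=\mathcal N(\rho-b)$ as above.
\item Solve the linear Lam\'e-type parabolic problem
\[
\rho u_t-\mu\Delta u-\nabla\big((\mu+\lambda(\rho))\operatorname{div}u\big)=-\rho v\cdot\nabla v-\nabla P(\rho)+\rho\nabla\Phi,
\]
with the Navier-slip conditions \eqref{boundary1}. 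This operator is strongly elliptic because $\mu>0$ and $\lambda\ge0$, and the density is bounded below. Galerkin approximation, or the $L^2$ theory of \cite{SS1993}, gives the $C([0,T];H^2)\cap L^2(H^3)$ regularity. The boundary terms are handled with $\operatorname{curl}u=-Au\cdot n^\perp$: integrating by parts, $-\Delta u=\nabla^\perp\operatorname{curl}u-\nabla\operatorname{div}u$ produces only the controlled term $\int_{\partial\Omega}A|u|^2$.
\end{enumerate}
Compatibility of the initial data, $u_0\in H^2\cap\tilde H^1$, gives $u_t\in C([0,T];L^2)$.

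\emph{Self-map and contraction.} For $T$ small, depending only on the listed norms and on $\inf\rho_0$, the map sends the ball into itself. It is a contraction in the weaker norm $C([0,T];L^2)$ for $\rho$ together with $L^\infty(L^2)\cap L^2(H^1)$ for $\sqrt\rho\,u$ and $u$; the Poisson force difference is bounded by $\|\rho_1\nabla\Phi_1-\rho_2\nabla\Phi_2\|_{L^2}\le C\|\rho_1-\rho_2\|_{L^2}$. The fixed point has the stated regularity by lower semicontinuity, and uniqueness follows from the same difference estimate.

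\emph{Main obstacle.} I expect the hardest step to be the higher-order parabolic estimates for $u$ under the Navier-slip condition on a merely $C^{2,1}$ boundary. These require the elliptic regularity $\|u\|_{H^{k+2}}\le C(\|\operatorname{div}u\|_{H^{k+1}}+\|\operatorname{curl}u\|_{H^{k+1}})$ for $u\cdot n=0$ on a simply connected domain. One must also check that $H^3$ regularity for $u$ is compatible with the boundary smoothness; the safe route is to invoke the classical results \cite{SS1993,S1980} for this step.
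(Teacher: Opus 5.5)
Your overall route matches the paper's, which gives no argument beyond citing the classical local theory \cite{SS1993,S1980}. Your treatment of the Poisson coupling is correct: mass conservation keeps $\mathcal{N}(\rho-b)$ well defined along the iteration, and $\|\rho_1\nabla\Phi_1-\rho_2\nabla\Phi_2\|_{L^2}\le C\|\rho_1-\rho_2\|_{L^2}$ makes it a harmless Lipschitz term in the contraction. The genuine gap is boundary regularity.

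Your bound $\|\nabla\Phi\|_{H^3}\le C\|\rho-b\|_{H^2}$ is $H^4$ regularity for the Neumann problem, and that needs a $C^{3,1}$ boundary. On a $C^{2,1}$ domain, Neumann theory only gives $\Phi\in H^3$ from $\Delta\Phi\in H^1$. So your bound for $\nabla\Phi_t$ in $H^2$ is fine, but the one for $\nabla\Phi$ in $H^3$ is not.

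The step you defer to the classical results, $u\in L^2(0,T;H^3)$ under $u\cdot n=0$, fails for a related reason, and citation cannot repair it. On a $C^{2,1}$ boundary, the trace of an $H^3$ field still lies in $H^{5/2}$ in arclength. On any arc where such a field is tangent and nonzero, $n^\perp=\pm u/|u|$, so $n$ itself would be $H^{5/2}$ there. A $C^{2,1}$ boundary only gives $n\in C^{1,1}$. There are $C^{2,1}$ domains whose normal is $H^{5/2}$ on no arc: take the curvature Lipschitz with a derivative that jumps on a dense set. On such domains every $H^3$ field tangent to $\partial\Omega$ vanishes on $\partial\Omega$. Then $u\in C([0,T];H^2)\cap L^2(0,T;H^3)$ would force $u_0|_{\partial\Omega}=0$, which fails for general $u_0\in H^2\cap\tilde H^1$.

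Consequently the lemma, with its $H^3$-level conclusions, can only be proved under a smoother boundary: at least $C^{3,1}$, and in practice the smooth setting in which \cite{SS1993,S1980} are proved. The paper's citation relies on the same implicit assumption, as does its bound $\|\nabla\Phi\|_{H^3}\le C\|\Delta\Phi\|_{H^2}$ in the proof of Proposition~\ref{prop1}. You should state that hypothesis explicitly. With it, the $H^3$ div--curl estimate and the $H^4$ Neumann bound are available, and the rest of your scheme (transport, linear Lam\'e problem with slip conditions, contraction in low norms) is the standard argument.

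If you insist on a general $C^{2,1}$ domain, the fix is not cosmetic. The regularity class itself must be lowered, and the transport and parabolic estimates redone at that level. Your iteration only needs $\nabla\Phi\in C([0,T];H^2)$ for the forcing $\rho\nabla\Phi$, but the $H^2$ estimate for $\rho$ uses $u\in L^2(0,T;H^3)$.
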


	The following lemma controls the $H^{m}$-norm of a vector-valued function $v$ in terms of the $H^{m-1}$-norms of $\mbox{curl}v$ and $\mbox{div} v$ (see \cite{XX2007}).
	\begin{lem}\label{lem-div-curl}
		Let $\Omega$ be a domain in $\mathbb{R}^{2}$ with smooth boundary $\partial\Omega$ and outward normal $n$. Then there exists a constant $C>0$ such that
		\begin{equation}\label{A3-1}
			\|v\|_{H^{s}}\leq C\left(\|\mbox{div}v\|_{H^{s-1}}+\|\mbox{curl}v\|_{H^{s-1}}
			+ |v\cdot n|_{H^{s-\frac{1}{2}}(\partial\Omega)}+\|v\|_{L^2}\right),
		\end{equation}
		for any $v\in H^{s}(\Omega)$, $s\geq1$.
	\end{lem}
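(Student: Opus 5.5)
The inequality \eqref{A3-1} is the classical div--curl estimate. I would prove it by induction on $s$, after a localization. I would argue for integer $s$ and recover non-integer $s$ by interpolation; since the lemma is quoted from \cite{XX2007}, the plan only needs to be convincing.

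\textbf{Base case $s=1$.} Choose an $H^2$ function $\psi$ solving
\[
\Delta\psi=0 \ \text{in } \Omega,\qquad \partial_n\psi=v\cdot n-\overline{v\cdot n}\ \text{on } \partial\Omega,
\]
where the bar denotes the boundary mean. Neumann elliptic regularity gives $\|\psi\|_{H^2}\le C|v\cdot n|_{H^{1/2}(\partial\Omega)}$. The remainder $w=v-\nabla\psi$ satisfies $\operatorname{div}w=\operatorname{div}v$ and $\operatorname{curl}w=\operatorname{curl}v$, and $w\cdot n$ is a constant $c$ on $\partial\Omega$ with $|c|\le C|v\cdot n|_{L^1(\partial\Omega)}$. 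For this $w$, integrating by parts twice (the Gaffney/Friedrichs identity) gives
\[
\|\nabla w\|_{L^2}^2=\|\operatorname{div}w\|_{L^2}^2+\|\operatorname{curl}w\|_{L^2}^2+\int_{\partial\Omega}(\text{curvature})\,|w|^2\,dS+(\text{terms involving } w\cdot n).
\]
The curvature term is controlled by the trace inequality $|w|_{L^2(\partial\Omega)}^2\le \varepsilon\|\nabla w\|_{L^2}^2+C_\varepsilon\|w\|_{L^2}^2$; the curvature is bounded because $\partial\Omega\in C^{2,1}$. Absorbing the $\varepsilon$-term gives the case $s=1$.

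\textbf{Step from $s-1$ to $s\ge 2$.} I would use a partition of unity and straighten the boundary with $C^{2,1}$ (or smoother) charts.
\begin{itemize}
\item Interior pieces $\chi v$: since $\Delta(\chi v)=\nabla\operatorname{div}(\chi v)-\nabla^{\perp}\operatorname{curl}(\chi v)$, interior elliptic regularity applies. The lower-order commutators $\nabla\chi\cdot v$ are handled by the induction hypothesis.
\item Boundary pieces, tangential derivatives: apply $\partial_\tau$ to $\chi v$ and use the $s-1$ estimate. The commutators $[\partial_\tau,\operatorname{div}]$ and $[\partial_\tau,\operatorname{curl}]$ and the trace of $\partial_\tau(v\cdot n)$ involve only derivatives of $n$ and lower-order norms of $v$. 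This step needs the regularity of $\partial\Omega$, which is why the lemma is stated for smooth boundaries.
\item Boundary pieces, normal derivative: solve algebraically. In local coordinates, $\operatorname{div}v$ and $\operatorname{curl}v$ together determine $\partial_n v$ from the tangential derivatives, since the symbol of $(\operatorname{div},\operatorname{curl})$ is invertible. This recovers all derivatives of order $s$.
\end{itemize}

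\textbf{Main obstacle.} I expect the hardest step to be the $s=1$ boundary integral, where control of the trace must be balanced against absorbing the $\varepsilon\|\nabla w\|_{L^2}^2$ term. For higher $s$, the difficulty is the bookkeeping of commutators with the boundary geometry, which is routine once charts are fixed.
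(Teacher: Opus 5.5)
The paper gives no proof of this lemma; it quotes it from \cite{XX2007}. The standard proof behind that citation goes through the theory of elliptic boundary-value systems (Agmon--Douglis--Nirenberg, as in Bourguignon--Brezis): $v\mapsto(\mbox{div}\,v,\mbox{curl}\,v)$ is an elliptic first-order system, and the boundary operator $v\mapsto v\cdot n$ satisfies the complementing condition.

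Your argument is a genuinely different, self-contained route, and it is correct for integer $s$.

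\textbf{Base case.} The delicate step works as you describe. Pointwise, $|\nabla w|^2=(\mbox{div}\,w)^2+(\mbox{curl}\,w)^2-2\det\nabla w$, and the determinant is a null Lagrangian. Writing $w=c\,n+a\,\tau$ on $\pl\Omega$, its boundary integral reduces to $\pm\frac12\int_{\pl\Omega}\kappa|w|^2\,ds$. The cross term $c\int_{\pl\Omega}\pl_s a\,ds$ vanishes because $w\cdot n\equiv c$. Subtracting $\nabla\psi$ is what spares you an $H^{1/2}$--$H^{-1/2}$ pairing at that point.

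\textbf{Induction step.} The commutators $[\mbox{div},\pl_\tau]$, $[\mbox{curl},\pl_\tau]$ and the trace of $\chi v\cdot\pl_\tau n$ are indeed of order $s-1$ and are absorbed by the induction hypothesis. The normal derivative $\pl_n v$ is then recovered algebraically from $\mbox{div}\,v$, $\mbox{curl}\,v$ and $\pl_\tau v$.

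\textbf{Comparison.} Your route makes visible the boundary regularity actually used: bounded curvature for $s=1$, and $C^{2,1}$ for $s=2$. This matters because the lemma is stated for smooth boundaries but is applied on a $C^{2,1}$ domain, and only with $s=1,2$. Your induction also yields the lower-order term $\|v\|_{L^2}$ directly. The elliptic-systems route instead gives all real $s\ge1$ and all $W^{s,p}$, $1<p<\infty$, at once. An $L^2$ energy identity cannot give the $L^p$ versions, and the paper does invoke this lemma in $L^p$ form in \eqref{5-6}.

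\textbf{Gap: non-integer $s$.} The phrase ``recover non-integer $s$ by interpolation'' does not work as written. Interpolation transfers upper bounds for linear operators, not a priori lower bounds. Set $X_k:=H^{k-1}\times H^{k-1}\times H^{k-\frac12}(\pl\Omega)\times L^2$ and $Av:=(\mbox{div}\,v,\mbox{curl}\,v,v\cdot n|_{\pl\Omega},v)$. Knowing $\|v\|_{H^k}\le C\|Av\|_{X_k}$ at two consecutive integers does not by itself give the intermediate levels. For that you need a linear left inverse of $A$ that is bounded at both endpoints.

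One is available here. Let $\phi$ solve $\Delta\phi=\mbox{div}\,v$ with $\pl_n\phi=v\cdot n$, and let $\chi$ solve $\Delta\chi=\mbox{curl}\,v$ with $\chi|_{\pl\Omega}=0$. If $\Omega$ is simply connected, then $v=\nabla\phi+\nabla^{\bot}\chi$. Otherwise one adds a finite-dimensional space of harmonic fields, which is controlled by $\|v\|_{L^2}$. The Neumann and Dirichlet solution operators are linear, so their fractional-order bounds do follow by interpolation, and the estimate then holds at every real $s\ge1$. Alternatively, restrict the claim to integer $s$, which is all the paper uses.
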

	
	\begin{lem} ([\cite{wahl}, Theorem 3.2] and \cite{A2014, M2005, wahl})\label{lem-important}
		Let $1<p<\infty$ and $\Omega$ be a simply connected bounded domain in $\mathbb{R}^2$ with Lipschitz boundary $\pl \Omega$. Then there exists a positive constant $C$ depending only on $\Omega$ and $p$ such that every $v \in W^{1,p}$ with $v \cdot n=0$ on $\pl \Omega$ satisfies
		\begin{equation}\label{important}
			\|\nabla v\|_{L^p}\leq C\left(\|\mbox{div}v\|_{L^p}+\|\mbox{curl}v\|_{L^p}\right).
		\end{equation}
	\end{lem}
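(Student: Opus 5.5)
Since $\Omega$ is simply connected, I would prove \eqref{important} by splitting $v$ into a gradient part and a rotated-gradient (stream function) part, and then apply standard elliptic $L^p$ theory to each. First I would treat the case of a smooth domain, and afterwards indicate how the Lipschitz case follows from the cited references.

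\textbf{Step 1: the gradient part.} Let $\varphi$ solve the Neumann problem
\[
\Delta\varphi=\mbox{div}v \ \text{in } \Omega,\qquad \partial_n\varphi=0 \ \text{on } \partial\Omega,\qquad \int_\Omega\varphi\,dx=0.
\]
This is solvable because $v\cdot n=0$ gives $\int_\Omega \mbox{div}v\,dx=0$. The Neumann $W^{2,p}$ theory then yields $\|\nabla^2\varphi\|_{L^p}\le C\|\mbox{div}v\|_{L^p}$.

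\textbf{Step 2: the stream-function part.} Set $w=v-\nabla\varphi$. Then $\mbox{div}w=0$ in $\Omega$ and $w\cdot n=0$ on $\partial\Omega$. Because $\Omega$ is simply connected, there is a stream function $\psi$ with $w=\nabla^{\bot}\psi$, and $\psi$ is constant on the connected boundary $\partial\Omega$; we normalize $\psi|_{\partial\Omega}=0$. Since $\mbox{curl}\,\nabla\varphi=0$, we have $\mbox{curl}w=\mbox{curl}v$, and with the convention $\nabla^\bot=(\partial_2,-\partial_1)$ this gives $-\Delta\psi=\mbox{curl}v$ (up to sign). The Dirichlet $W^{2,p}$ estimate then gives $\|\nabla^2\psi\|_{L^p}\le C\|\mbox{curl}v\|_{L^p}$.

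\textbf{Step 3: conclusion.} Since $\nabla v=\nabla^2\varphi+\nabla\nabla^\bot\psi$, adding the two estimates gives \eqref{important}. No $\|v\|_{L^p}$ term appears. This is because simple connectivity together with $v\cdot n=0$ rules out nonzero harmonic vector fields: if $\mbox{div}v=\mbox{curl}v=0$, then $\psi$ is harmonic with zero boundary values, so $\psi\equiv0$, and $\nabla\varphi=0$. The uniqueness in Steps 1 and 2 therefore removes the lower-order term, which would otherwise come in through Lemma \ref{lem-div-curl}.

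\textbf{Main obstacle.} The difficulty is the low boundary regularity: the full $W^{2,p}$ theory for both the Neumann and the Dirichlet problem can fail on general Lipschitz domains for large $p$. For the smooth or $C^{1,1}$ domains actually used in this paper ($C^{2,1}$), the argument above is complete. For the general Lipschitz statement I would defer to \cite{wahl}, Theorem 3.2, and \cite{A2014, M2005}. These rest on the same decomposition, but the elliptic estimates must be replaced by finer harmonic analysis on Lipschitz domains (layer potentials), with the admissible range of $p$ and the constant depending on $\Omega$.
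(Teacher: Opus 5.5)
The paper gives no proof of this lemma; it only cites the literature. Your Helmholtz-type splitting is the standard argument, and it is correct when $\partial\Omega\in C^{1,1}$. The Neumann problem for $\varphi$ is compatible because $v\cdot n=0$. The remainder $w=v-\nabla\varphi$ is divergence free and tangential, so $w=\nabla^{\bot}\psi$ with $\psi$ constant on the connected boundary. With the paper's conventions you get exactly $\Delta\psi=\mbox{curl}\,v$, since $\mbox{curl}\,\nabla^{\bot}\psi=\partial_2\partial_2\psi+\partial_1\partial_1\psi$. The zero-mean and zero-trace normalizations make both scalar problems uniquely solvable, so their $W^{2,p}$ estimates carry no lower-order term, and none appears in \eqref{important}. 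The paper's $\Omega$ is $C^{2,1}$ throughout and the estimate is only used there (e.g. in \eqref{4-1} and \eqref{5-6}), so your proof covers every application.

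Your last paragraph is the part that does not hold up. The Lipschitz version for every $p\in(1,\infty)$ cannot be obtained from the references, because it is false in that generality. Your route could not reach it in any case: it needs $W^{2,p}$ solvability of both scalar problems, which is strictly more than the a priori estimate asserts.

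\textbf{Counterexample.} Take a polygon with a vertex of interior angle $\omega\in(\pi/2,2\pi)$, $\omega\neq\pi$. Set $\lambda=\pi/\omega$ and $p=2/(2-\lambda)\in(1,\infty)$. Let $S=r^{\lambda}\cos(\lambda\theta)$ in polar coordinates at the vertex, so $\Delta S=0$, $\partial_nS=0$ on both sides, and $|\nabla^2S|=c_\lambda r^{\lambda-2}$ with $c_\lambda>0$. Choose a radial cutoff $\chi$ equal to $1$ for $r<r_0$, and put $g_k(r)=\phi(\log(1/r)/k)$, where $\phi=1$ on $(-\infty,1]$ and $\phi=0$ on $[2,\infty)$. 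The fields $v_k=\nabla(\chi g_kS)$ are smooth up to the boundary, tangential and curl free. Since $(\lambda-2)p+1=-1$,
\[
\|\mbox{div}\,v_k\|_{L^p}^p\le C+Ck^{-p}\int_{e^{-2k}}^{e^{-k}}r^{(\lambda-2)p+1}\,dr=C+Ck^{1-p},
\qquad
\|\nabla v_k\|_{L^p}^p\ge c\int_{e^{-k}}^{r_0}r^{(\lambda-2)p+1}\,dr\ge ck .
\]
So \eqref{important} fails, for instance at $p=3/2$ on an L-shaped domain and at $p=4$ on a regular hexagon.

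This also sharpens your diagnosis. The breakdown is not only a large-$p$ effect: at a reentrant corner, $W^{2,p}$ regularity is already lost for some $p<2$. What destroys the a priori estimate itself is the exceptional exponent $p=2/(2-\pi/\omega)$. The right fix is to state the lemma for $C^{1,1}$ domains (here $C^{2,1}$), where your argument is complete, rather than defer the Lipschitz claim to the literature.
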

	
	We will also frequently use the following modified version of the classical Poincar\'{e}--Sobolev inequality.
	
	\begin{lem}(see \cite{T1976})\label{lem-poincare}
		There exists a positive constant $C$ depending only on $\Omega$ such that every function $u \in H^1(\Omega)$ satisfies, for $2<p<\infty$,
		\begin{equation}\label{poincare}
			\|u\|_{L^p} \leq C p^{\frac{1}{2}} \|u\|^{\frac{2}{p}}_{L^2} \|u\|^{1-\frac{2}{p}}_{H^1},
		\end{equation}
		where $\|u\|_{H^1}$ can be replaced by $\|\nabla u\|_{L^2}$ provided that
		$$(u\cdot n)|_{\pl \Omega}=0 \quad \mbox{or} \quad \int_{\Omega} u \mbox{d} x=0.$$
	\end{lem}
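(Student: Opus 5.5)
The plan is to combine the sharp Sobolev embedding constant in two dimensions with interpolation.

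\emph{Step 1: reduce to zero mean.} Write $u=\bar u+(u-\bar u)$ with $\bar u=|\Omega|^{-1}\int_\Omega u\,dx$. The constant part satisfies
\[
\|\bar u\|_{L^p}\le C\|u\|_{L^2},
\]
and since $\|u\|_{L^2}\le \|u\|_{L^2}^{2/p}\|u\|_{H^1}^{1-2/p}$, this piece is harmless. So it suffices to treat $v=u-\bar u$, which has zero mean.

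\emph{Step 2: the sharp $p$-dependence.} For zero-mean $v\in H^1(\Omega)$ on a bounded Lipschitz domain in $\mathbb{R}^2$, I will use Trudinger's estimate
\[
\|v\|_{L^r}\le C\sqrt{r}\,\|\nabla v\|_{L^2}\qquad (r\ge 2).
\]
One route is to extend $v$ to a compactly supported $H^1$ function, whose norm is controlled by $\|v\|_{H^1}\le C\|\nabla v\|_{L^2}$ via the Poincar\'e inequality. Then expand the exponential integrability $\int \exp(c|v|^2/\|\nabla v\|_{L^2}^2)\le C$ in Taylor series: the coefficient of $|v|^{2k}$ gives $\|v\|_{L^{2k}}^{2k}\le C\,k!\,c^{-k}\|\nabla v\|_{L^2}^{2k}$, and Stirling's formula turns $(k!)^{1/(2k)}$ into $\sim\sqrt{k}$. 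This is the step I expect to be the main obstacle, namely tracking the $\sqrt{p}$ constant. Everything else is Hölder bookkeeping.

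\emph{Step 3: interpolation.} Apply Hölder's inequality between $L^2$ and $L^{2p}$:
\[
\|v\|_{L^p}\le \|v\|_{L^2}^{\theta}\|v\|_{L^{2p}}^{1-\theta},\qquad \frac1p=\frac{\theta}{2}+\frac{1-\theta}{2p},
\]
which gives $\theta=1/(p-1)$. A cleaner route is the Gagliardo--Nirenberg-type bound
\[
\|v\|_{L^p}^p\le C\,p^{p/2}\|v\|_{L^2}^2\|\nabla v\|_{L^2}^{p-2}.
\]
To obtain it, apply the exponent-tracked Step 2 in the form $\|w\|_{L^2}\le C\|w\|_{W^{1,1}}$ (the $L^1$-Sobolev embedding in 2D, constant independent of $p$) to $w=|v|^{p/2}$. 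This yields
\[
\|v\|_{L^p}^{p}\le C\big(\|v\|_{L^{p/2}}^{p/2}+p\,\|v\|^{p/2-1}_{L^{p-2}}\|\nabla v\|_{L^2}\big)^2.
\]
Iterating from $p$ down to $2$, or combining with Hölder and the Young inequality, then produces the factor $p^{1/2}$ after taking $p$-th roots.

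\emph{Step 4: recombine and handle the variants.} Adding back $\bar u$ gives the $H^1$ version. When $\int_\Omega u\,dx=0$, we have $\bar u=0$ and Poincar\'e gives $\|u\|_{H^1}\le C\|\nabla u\|_{L^2}$, so $\|\nabla u\|_{L^2}$ may replace $\|u\|_{H^1}$. When $u\cdot n=0$ on $\partial\Omega$ for a vector field $u$, I use the Poincar\'e-type inequality $\|u\|_{L^2}\le C\|\nabla u\|_{L^2}$. This holds because the only constant vector fields tangent to the boundary of a bounded domain are zero (the normals span $\mathbb{R}^2$), which gives it by a compactness argument. Then $\|u\|_{H^1}\le C\|\nabla u\|_{L^2}$ and the same replacement applies.
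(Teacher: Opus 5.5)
\textbf{There is a genuine gap in Step 3.} Steps 1, 2 and 4 are sound; the paper itself gives no proof and simply quotes the lemma from the literature. The problem is Step 3, where the whole content of the lemma lies: it does not produce $\sqrt p$ together with the weight $\|v\|_{L^2}^{2/p}$.

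Your ``cleaner route'' applies $W^{1,1}\hookrightarrow L^2$ to $|v|^{p/2}$. That inequality is not your Step 2, so the Trudinger $\sqrt p$ is never used. Discarding the lower-order term, it gives the recursion
\[
\|v\|_{L^p}^p\le Cp^2\,\|v\|_{L^{p-2}}^{p-2}\|\nabla v\|_{L^2}^2 .
\]
Iterating from $p=2k$ down to $2$ gives
\[
\|v\|_{L^{2k}}^{2k}\le C^k (k!)^2\|v\|_{L^2}^2\|\nabla v\|_{L^2}^{2k-2}.
\]
Since $(k!)^{2/(2k)}=(k!)^{1/k}\sim k/e$, the constant grows like $p$, not $\sqrt p$. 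Bounding $\|v\|_{L^{p-2}}$ instead by H\"older between $L^2$ and $L^p$ and absorbing gives the factor $p^{1-2/p}$, which is again linear. The loss is structural: each step of the recursion costs $p^2$, whereas going from $(C\sqrt{p-2})^{p-2}$ to $(C\sqrt p)^p$ only permits a factor of order $p$.

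Your first route (H\"older between $L^2$ and $L^{2p}$, then Step 2) keeps $\sqrt p$ but yields $\|v\|_{L^2}^{1/(p-1)}$ instead of $\|v\|_{L^2}^{2/p}$. No H\"older interpolation can repair this. Putting the exact weight $2/p$ on $L^2$ forces the other endpoint to be $L^\infty$, and $H^1\not\hookrightarrow L^\infty$ in two dimensions.

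\textbf{The missing idea is a dilation argument on $\mathbb{R}^2$.} First prove the inhomogeneous bound $\|w\|_{L^p(\mathbb{R}^2)}\le C\sqrt p\,\|w\|_{H^1(\mathbb{R}^2)}$. One way is Hausdorff--Young followed by Cauchy--Schwarz:
\[
\|w\|_{L^p}\le\|\hat w\|_{L^{p'}}\le\|(1+|\xi|^2)^{1/2}\hat w\|_{L^2}\,\|(1+|\xi|^2)^{-1/2}\|_{L^{s}},\qquad s=\frac{2p}{p-2}.
\]
Since $\int_{\mathbb{R}^2}(1+|\xi|^2)^{-s/2}d\xi=2\pi/(s-2)=\pi(p-2)/2$, the last factor is at most $C\sqrt p$. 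Alternatively, use your Step 2 in its whole-space Trudinger--Moser form.

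Next apply this bound to $w(\lambda x)$. The three norms scale as
\[
\|w(\lambda\cdot)\|_{L^p}=\lambda^{-2/p}\|w\|_{L^p},\qquad \|w(\lambda\cdot)\|_{L^2}=\lambda^{-1}\|w\|_{L^2},\qquad \|\nabla(w(\lambda\cdot))\|_{L^2}=\|\nabla w\|_{L^2}.
\]
Choosing $\lambda=\|w\|_{L^2}/\|\nabla w\|_{L^2}$ gives $\|w\|_{L^p}\le 2C\sqrt p\,\|w\|_{L^2}^{2/p}\|\nabla w\|_{L^2}^{1-2/p}$ on $\mathbb{R}^2$.

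Finally, transfer this to $\Omega$ with an extension operator $E$ that is bounded simultaneously $L^2(\Omega)\to L^2(\mathbb{R}^2)$ and $H^1(\Omega)\to H^1(\mathbb{R}^2)$, such as Stein's extension for Lipschitz domains. Because $\|\nabla Eu\|_{L^2}$ is controlled only by $\|u\|_{H^1}$, this is exactly why the general statement carries $\|u\|_{H^1}$. With this in place, your Step 4 is correct: use Poincar\'e for zero mean, or Lemma \ref{lem-poincare2} when $u\cdot n=0$, to replace $\|u\|_{H^1}$ by $\|\nabla u\|_{L^2}$.
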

	
	We now recall the Poincar\'{e} inequality for tangential vector fields. Its proof uses a standard contradiction argument and can be found in \cite{MM2015}.
	
	\begin{lem}\label{lem-poincare2}
		Let $\Omega \in \mathbb{R}^n$ be an open, bounded Lipschitz set. Then every $u \in W^{1,2}(\Omega)$ satisfying $u \cdot n=0$ on $\pl \Omega$ obeys
		\begin{equation}\label{poincare2}
			\|u\|_{L^2(\Omega)} \leq C(\Omega) \|\nabla u\|_{L^2},
		\end{equation}
		where the constant $C(\Omega)$ depends only on $\Omega$ (it is independent of $u$).
	\end{lem}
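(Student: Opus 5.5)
The proof will be a compactness-and-contradiction argument; no explicit constant is needed. Suppose the inequality \eqref{poincare2} fails. Then for every $k\in\mathbb N$ there is $u_k\in W^{1,2}(\Omega)$ with $u_k\cdot n=0$ on $\pl\Omega$ and $\|u_k\|_{L^2}>k\|\nabla u_k\|_{L^2}$. We normalize so that $\|u_k\|_{L^2}=1$, which gives $\|\nabla u_k\|_{L^2}<1/k$. The sequence is then bounded in $W^{1,2}(\Omega)$. Since $\Omega$ is bounded and Lipschitz, the Rellich--Kondrachov theorem and weak compactness let us pass to a subsequence with $u_k\rightharpoonup u$ weakly in $W^{1,2}$ and $u_k\to u$ strongly in $L^2$.

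Next we identify the limit. Strong $L^2$ convergence gives $\|u\|_{L^2}=1$. Weak lower semicontinuity of the norm gives $\|\nabla u\|_{L^2}\le\liminf_k\|\nabla u_k\|_{L^2}=0$. On a connected domain this forces $u\equiv c$ for a constant vector $c\in\mathbb R^n$ with $|c|^2|\Omega|=1$, so $c\neq0$. If $\Omega$ is not connected, we apply the argument on each of its finitely many components; a bounded Lipschitz set has finitely many, and on each component $u$ is a nonzero constant for at least one of them. The boundary condition passes to the limit because the trace operator $W^{1,2}(\Omega)\to L^2(\pl\Omega)$ is bounded and linear, hence weakly continuous. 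Thus $u_k|_{\pl\Omega}\rightharpoonup u|_{\pl\Omega}$, and therefore $u\cdot n=0$ $\mathcal H^{n-1}$-a.e. on $\pl\Omega$, where $n\in L^\infty(\pl\Omega)$. We conclude that $c\cdot n(x)=0$ for a.e. $x$ on the boundary of that component.

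The main obstacle is ruling out a nonzero constant vector that is tangential a.e. on the boundary of a bounded Lipschitz domain $D$. We will use the divergence theorem with the linear function $\phi(x)=c\cdot x$. This gives
\[
\int_{D}|c|^2\,\mbox{d}x=\int_D \Div(\phi\, c)\,\mbox{d}x=\int_{\pl D}(c\cdot x)(c\cdot n)\,\mbox{d}\mathcal H^{n-1}=0 .
\]
The divergence theorem holds on Lipschitz domains, and the right-hand side vanishes because $c\cdot n=0$ a.e. on $\pl D$. Since $|D|>0$, it follows that $c=0$, which contradicts $|c|^2|\Omega|=1$ (or $c\neq0$ on the chosen component). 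This contradiction proves \eqref{poincare2}, with a constant $C(\Omega)$ that depends only on $\Omega$, as claimed.
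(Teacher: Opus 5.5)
Your proof is correct and is essentially the argument the paper has in mind: the paper gives no proof of its own and only points to a standard compactness--contradiction argument in the cited reference. Your steps (normalization, Rellich--Kondrachov, weak continuity of the trace, reduction to a nonzero constant vector $c$ with $c\cdot n=0$ a.e. on the boundary of a component $D$, and the identity $|c|^2|D|=\int_{\partial D}(c\cdot x)(c\cdot n)\,\mbox{d}\mathcal{H}^{n-1}=0$ to exclude it) carry out that argument completely.
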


	\begin{prop}(see \cite{CL2023})\label{lem-boundary-key}
		Assume that $\Omega$ is a bounded $C^{2,1}$ domain in $\mathbb{R}^2$. Let $v, f \in H^{1}$ with $v\cdot n=0$ on $\partial\Omega$. Then
		\begin{align}
			\left|\int_{\partial\Omega}v\cdot\nabla f\right|\leq C\|v\|_{H^1}\| f\|_{H^1}.\nonumber
		\end{align}
	\end{prop}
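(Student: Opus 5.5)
Since $v\cdot n=0$ on $\partial\Omega$, the vector $v$ is tangential on the boundary, so $v\cdot\nabla f=(v\cdot n^{\bot})\,\partial_\tau f$ there, where $\partial_\tau=n^{\bot}\cdot\nabla$ is the tangential derivative. I will not try to bound $\partial_\tau f$ on $\partial\Omega$ directly, since $f\in H^1$ only has an $H^{1/2}$ trace. Instead I will move the derivative onto $v$ by rewriting the integrand as a divergence or curl of something integrable in $\Omega$.

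Extend $n$ and $n^{\bot}$ smoothly into $\Omega$, as fixed in the introduction; this uses the $C^{2,1}$ regularity. Define $g:=v\cdot n^{\bot}$ in $\Omega$. On $\partial\Omega$ we then have $v=g\,n^{\bot}$. Using $n^{\bot}\cdot\nabla f=\nabla^{\bot}f\cdot(-n)$ up to sign convention, i.e. $n^{\bot}\cdot\nabla f=n_2\partial_1 f-n_1\partial_2 f=-n\cdot\nabla^{\bot}f$ with $\nabla^{\bot}=(\partial_2,-\partial_1)$, I obtain
\[
\int_{\partial\Omega} v\cdot\nabla f=-\int_{\partial\Omega} g\,\nabla^{\bot}f\cdot n .
\]
The divergence theorem applies to the field $g\nabla^{\bot}f$, which lies in $W^{1,1}$ for smooth data. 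Together with $\mathrm{div}\,\nabla^{\bot}f=0$ this gives
\[
\int_{\partial\Omega} v\cdot\nabla f=-\int_\Omega \nabla g\cdot\nabla^{\bot}f\,dx .
\]
This is bounded by $\|\nabla g\|_{L^2}\|\nabla f\|_{L^2}\le C\|v\|_{H^1}\|f\|_{H^1}$, because $\nabla g=\nabla v\cdot n^{\bot}+v\cdot\nabla n^{\bot}$ and the extension of $n^{\bot}$ is $C^{1}$, with constants depending only on $\Omega$.

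The identity is first established for $v\in C^1(\bar\Omega)$ with $v\cdot n=0$ and $f\in C^1(\bar\Omega)$, and then extended to general $v,f\in H^1$ by density. The left side is continuous in $H^1\times H^1$ when read as the duality pairing of the trace $v|_{\partial\Omega}\in H^{1/2}(\partial\Omega)$ with $\partial_\tau f\in H^{-1/2}(\partial\Omega)$; alternatively, one simply takes the right side as the definition for $H^1$ functions.

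The main technical point is this density step for $v$ under the constraint $v\cdot n=0$. I would handle it by approximating $v$ with $w_k\to v$ in $H^1$ without any constraint and then replacing it by $w_k-(w_k\cdot n)n$, using the smooth extension of $n$ near the boundary. This corrected sequence still converges in $H^1$, because it converges to $v-(v\cdot n)n$, which agrees with $v$ on $\partial\Omega$. Since only $g=v\cdot n^{\bot}$ enters the final formula, that boundary agreement is enough.
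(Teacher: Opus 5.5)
Your proof is correct and is essentially the standard argument: the paper gives no proof of its own here and simply defers to \cite{CL2023}, and your mechanism is the same device the paper itself uses inline in \eqref{13-7} and \eqref{13-10}. That device is to write $v=(v\cdot n^{\bot})n^{\bot}$ on $\partial\Omega$, rewrite $g\,n^{\bot}\cdot\nabla f=-g\,\nabla^{\bot}f\cdot n$, and convert this into an interior integral via the divergence theorem and $\mbox{div}\,\nabla^{\bot}f=0$. One simplification: for $f\in C^2(\bar\Omega)$ the identity already holds for every $v\in H^1$ with $v\cdot n=0$ (since then $g\nabla^{\bot}f\in W^{1,1}$), so only $f$ needs to be approximated and your constrained density step for $v$ is unnecessary.
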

	
	Next, to estimate the $L^{\infty}(0,T;L^p)$-norm of $\nabla \rho$, we use the following inequality of Beale-Kato-Majda type, first proved in \cite{BKM1984} for $\mbox{div}u=0$.
	
	\begin{lem}(see \cite{CL2023, BKM1984})\label{lem-nablau}
		For $2<q<\infty$, there is a constant $C$ depending only on $\Omega$ and $q$ such that every $u \in \{ W^{2,q}(\Omega)| u \cdot n=0, \mbox{curl} u = -Au \cdot n^{\bot} \quad on \quad \pl \Omega \}$ satisfies
		\begin{equation}\label{nablerho}
			\|\nabla u \|_{L^{\infty}} \leq C( \|\mbox{div}u\|_{L^{\infty}} + \|\mbox{curl} u\|_{L^{\infty}} )\log (e+ \|\nabla^2 u\|_{L^q}) + C \|\nabla u\|_{L^2} +C.
		\end{equation}
	\end{lem}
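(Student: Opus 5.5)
The plan is to combine the standard Beale--Kato--Majda logarithmic inequality for the Neumann/slip setting with the boundary information $u\cdot n=0$, $\mbox{curl}u=-Au\cdot n^{\bot}$. First reduce to a homogeneous curl boundary condition. Set $\omega=\mbox{curl}u$ and let $\tilde\omega:=\omega+A\,u\cdot n^{\bot}$, using the fixed smooth extensions of $A$ and $n^{\bot}$. Then $\tilde\omega=0$ on $\pl\Omega$.

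Next decompose $u=\nabla\varphi+\nabla^{\bot}\psi$ with
\begin{equation*}
\Delta\varphi=\mbox{div}u,\quad \pl_n\varphi=0,\qquad \Delta\psi=\omega,\quad \psi|_{\pl\Omega}=0 .
\end{equation*}
This decomposition is exact because $\Omega$ is simply connected and $u\cdot n=0$. Since $\nabla^{\bot}\psi\cdot n=\pm\pl_\tau\psi=0$, the vector field $u-\nabla\varphi-\nabla^{\bot}\psi$ is div- and curl-free and tangential, hence it vanishes.

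It then suffices to bound $\|\nabla^2\varphi\|_{L^\infty}$ and $\|\nabla^2\psi\|_{L^\infty}$. For each Laplace problem with $C^{2,1}$ boundary, I would use the classical log-Lipschitz estimate obtained from the Green function representation:
\begin{equation*}
\|\nabla^2 w\|_{L^\infty}\le C\|\Delta w\|_{L^\infty}\log(e+\|\nabla\Delta w\|_{L^q})+C\|\Delta w\|_{L^2}+C .
\end{equation*}
This is proved by splitting the singular kernel $\nabla^2_xG(x,y)$ into the regions $|x-y|<\delta$, $\delta\le|x-y|<1$ and the rest. The near region is controlled by the H\"older seminorm $\delta^{1-2/q}\|\nabla f\|_{L^q}$, using the cancellation of Calder\'on--Zygmund kernels, which survives near the boundary via flattening or reflection for Neumann and Dirichlet problems. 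The middle region gives $\|f\|_{L^\infty}\log(1/\delta)$, and the far region gives $\|f\|_{L^2}$. Finally choose $\delta=(e+\|\nabla f\|_{L^q})^{-q/(q-2)}$.

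Apply this with $f=\mbox{div}u$ and $f=\omega$. The resulting logarithm contains $\|\nabla\mbox{div}u\|_{L^q}+\|\nabla\omega\|_{L^q}\le C\|\nabla^2u\|_{L^q}$, which gives the claimed form. The terms $\|\mbox{div}u\|_{L^2}+\|\omega\|_{L^2}$ are bounded by $C\|\nabla u\|_{L^2}$.

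The main obstacle is the boundary near-field analysis for $\psi$. Its Dirichlet data are zero, but $\omega$ itself is not zero on the boundary. The cancellation argument needs only H\"older continuity of $f$ up to the boundary, not vanishing. Still, to be safe I would instead run the argument on $\tilde\omega$, with $\psi=\tilde\psi-\psi_1$ and $\Delta\psi_1=A\,u\cdot n^{\bot}$. The correction $\psi_1$ has $\|\nabla^2\psi_1\|_{L^\infty}\le C\|\psi_1\|_{W^{3,q}}\le C\|u\|_{W^{1,q}}$. This bound is not yet in the required form, so it must be handled by interpolation: $\|u\|_{W^{1,q}}\le C\|\nabla u\|_{L^2}^{\theta}\|\nabla u\|_{L^\infty}^{1-\theta}$. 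Young's inequality then absorbs $\varepsilon\|\nabla u\|_{L^\infty}$ into the left side, leaving $C\|\nabla u\|_{L^2}$ plus a constant. Alternatively, the lower-order term $A\,u\cdot n^{\bot}$ can be bounded in $L^\infty$ by $\|\nabla u\|_{L^\infty}^{1-\theta}\|\nabla u\|_{L^2}^{\theta}$ and absorbed in the same way. Either route completes the estimate.
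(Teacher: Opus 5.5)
The paper gives no argument of its own for this lemma; it only cites \cite{CL2023,BKM1984}. Your main line is a correct, self-contained proof of the usual Beale--Kato--Majda type.

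\textbf{The direct route works.} The decomposition $u=\nabla\varphi+\nabla^{\bot}\psi$ (zero-mean Neumann potential, Dirichlet stream function) is exact because $\Omega$ is simply connected and $u\cdot n=0$. Applying the scalar log-Lipschitz bound with $f=\mbox{div}u$ and $f=\omega$ gives the stated inequality, since $\|\nabla\mbox{div}u\|_{L^q}+\|\nabla\omega\|_{L^q}\le C\|\nabla^2u\|_{L^q}$. As you observe, that scalar bound needs only $f\in C^{0,1-2/q}(\overline{\Omega})$, not $f=0$ on $\partial\Omega$. Consequently the curl boundary condition is never used. Your route in fact proves the estimate for every $u\in W^{2,q}$ with $u\cdot n=0$, with $C=C(\Omega,q)$ exactly as stated.

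\textbf{The near-boundary cancellation.} You only sketch this step ("flattening or reflection"). It is cleanest to avoid flattening altogether. Write $\nabla^2w(x)=\int_\Omega\nabla_x^2G(x,y)\bigl(f(y)-f(x)\bigr)\,dy+f(x)\,\nabla^2w_1(x)$, where $w_1(x)=\int_\Omega G(x,y)\,dy$. For the zero-mean Neumann function $w_1$ is constant; for the Dirichlet function it is bounded in $C^2$ by Schauder theory. Then only the pointwise bound $|\nabla_x^2G(x,y)|\le C|x-y|^{-2}$ is needed. Splitting at $|x-y|=\delta$ gives $C\delta^{1-2/q}\|\nabla f\|_{L^q}+C\|f\|_{L^\infty}(1+\log\frac1\delta)$, with no $L^2$ term at all.

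\textbf{The detour through $\tilde\omega$ is unnecessary, and as written it does not close.} Applying the scalar bound to $\tilde\psi$ produces $\|\tilde\omega\|_{L^\infty}\log(e+\|\nabla\tilde\omega\|_{L^q})$. So $\|A\,u\cdot n^{\bot}\|_{L^\infty}$ appears multiplied by a logarithm of $\|\nabla^2u\|_{L^q}$.

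Your proposed absorption, via $\|u\|_{L^\infty}\le C\|\nabla u\|_{L^\infty}^{1-\theta}\|\nabla u\|_{L^2}^{\theta}$ and Young's inequality, fails for this term. It leaves $C_\varepsilon\|\nabla u\|_{L^2}\bigl(\log(e+\|\nabla^2u\|_{L^q})\bigr)^{1/\theta}$, which is not of the admissible form. The same Young argument is fine for $\nabla^2\psi_1$, because that term carries no logarithm.

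If you want to keep the detour, use Lemma \ref{lem-important} instead. For $p>2$ one has $\|u\|_{L^\infty}\le C\|\nabla u\|_{L^p}\le C(\|\mbox{div}u\|_{L^\infty}+\|\omega\|_{L^\infty})$. Hence $\|\tilde\omega\|_{L^\infty}\le C(\|\mbox{div}u\|_{L^\infty}+\|\omega\|_{L^\infty})$.

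The extra $C\|\nabla u\|_{L^q}$ inside the logarithm is then harmless. Suppose $\|\nabla u\|_{L^q}\ge\|\nabla^2u\|_{L^q}$. Then $\|\nabla u\|_{L^\infty}\le C\|\nabla u\|_{L^q}\le C\|\nabla u\|_{L^2}^{2/q}\|\nabla u\|_{L^\infty}^{1-2/q}$, i.e.\ $\|\nabla u\|_{L^\infty}\le C\|\nabla u\|_{L^2}$, and the claim is trivial.

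Still, the direct route on $\omega$ is simpler, avoids any dependence on $A$, and is the one you should present.
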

	
	\begin{lem}\label{theorem-trace}
		Let $f \in H^{m_1} (\Omega) \cap H^{m_2} (\Omega)$ and $\triangledown f \in H^{m_2} (\Omega)$. Then
		
		\begin{equation}\label{2eq2-3}
			|f|^2_{H^s(\pl \Omega)} \leq C(\|\nabla f\|_{H^{m_2}} + \|f\|_{H^{m_2}}) \| f\|_{H^{m_1}},
		\end{equation}
		where $m_1+ m_2 \geq 2s \geq 0$ and $m_1$, $m_2$ are nonnegative integers.
	\end{lem}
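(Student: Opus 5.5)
This is a trace-interpolation inequality, and I would prove it by a density argument combined with an integration-by-parts identity in a collar near the boundary. Since $\Omega$ is $C^{2,1}$, the outward normal $n$ admits a $C^{1,1}$ (hence $W^{1,\infty}$) extension to $\overline\Omega$, which we have already fixed. I will call this extension $\nu$.

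\emph{Base case $s=0$, $m_1=m_2=0$.} For smooth $f$, the divergence theorem gives
\begin{align*}
\int_{\pl\Omega} f^2\,dS=\int_\Omega \Div(f^2\nu)\,dx=\int_\Omega\big(2f\nabla f\cdot\nu+f^2\Div\nu\big)\,dx
\le C\big(\|\nabla f\|_{L^2}+\|f\|_{L^2}\big)\|f\|_{L^2}.
\end{align*}
This is exactly the claim with $m_1=m_2=0$. It also shows that the stronger hypothesis $\nabla f\in H^{m_2}$ is the natural one, with the $L^2$ norm on the boundary.

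\emph{General integer case.} For $s$ a nonnegative integer, I would apply the same identity to every tangential derivative $\pl_\tau^k f$ with $k\le s$. This gives
\[
|f|^2_{H^s(\pl\Omega)}\le C\sum_{|\alpha|\le s}\big(\|\nabla \pl^\alpha f\|_{L^2}+\|\pl^\alpha f\|_{L^2}\big)\|\pl^\alpha f\|_{L^2}.
\]
For a multi-index with $|\alpha|\le s$, I then integrate by parts in the interior to redistribute derivatives so that one factor carries at most $m_1$ derivatives of $f$ and the other at most $m_2+1$. Concretely, the bilinear form $\int_\Omega \nabla\pl^\alpha f\cdot \nu\,\pl^\alpha f$ can be rewritten by moving $|\alpha|-m_1$ derivatives (or adding them when $|\alpha|<m_1$) across. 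Each such move produces boundary terms of lower order, which are handled by induction on $s$, and commutators with $\nu$, which are bounded using $\nu\in W^{1,\infty}$. The constraint $m_1+m_2\ge 2s$ is exactly what makes the total derivative count admissible.

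\emph{Non-integer $s$.} Here I would interpolate between the integer cases using the standard interpolation characterization of $H^s(\pl\Omega)$. Alternatively, I would flatten the boundary locally with $C^{2,1}$ charts and a partition of unity, and reduce to the half-space. There one has the Fourier estimate
\[
|g(\cdot,0)|_{H^s}^2\lesssim\int|\xi|^{2s}\,|\hat g|\,|\pl_{x_2}\hat g|\,d\xi\,dx_2 .
\]
Splitting $|\xi|^{2s}\le|\xi|^{m_1}|\xi|^{m_2}$ and applying Cauchy--Schwarz then gives the estimate directly. This half-space computation in fact covers all $s$ at once, and I would probably present it as the main argument, with the collar identity above as motivation.

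\emph{Main obstacle.} The hard part is controlling the regularity loss from the boundary flattening. With only a $C^{2,1}$ boundary, the chart derivatives are bounded up to third order, which limits the admissible $m_1,m_2$. In the paper only $s\le 3/2$ and $m_1,m_2\le2$ are needed, and within that range the chart regularity suffices. Finally, the density of smooth functions in the relevant spaces extends the inequality to general $f$.
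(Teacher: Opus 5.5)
The paper gives no proof of this lemma. It is quoted as a standard trace inequality, so there is no argument of the authors to compare with, and your proposal has to stand on its own.

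\textbf{Main route.} Your flattening argument is the standard proof, and it works: partition of unity, $C^{2,1}$ flattening charts $\kappa_j$, then on the half-plane $|\hat g(\xi,0)|^2\le 2\int_0^\infty|\hat g|\,|\pl_{x_2}\hat g|\,dx_2$ followed by Cauchy--Schwarz.
\begin{itemize}
\item The term $\|f\|_{H^{m_2}}$ on the right enters through $\nabla(\chi_j f)=\chi_j\nabla f+f\nabla\chi_j$.
\item Composition with $C^{2,1}$ charts preserves $H^k$ for $k\le 3$, and the coefficient $D\kappa_j^{-1}\in C^{1,1}$ is harmless in $H^{m_2}$ for $m_2\le2$. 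So the argument covers $m_1\le 3$, $m_2\le 2$.
\item That is far more than the paper needs. Its applications, e.g.\ \eqref{F-boundaryestimate}, \eqref{6-8}, \eqref{13-6}, only use $s\in\{0,\tfrac12\}$ with $m_1\le 1$, $m_2=0$.
\end{itemize}

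One correction: split with $\langle\xi\rangle=(1+|\xi|^2)^{1/2}$, not $|\xi|$. For $|\xi|<1$ and $m_1+m_2>2s$ one has $|\xi|^{2s}>|\xi|^{m_1+m_2}$, and the full $H^s$ norm carries the weight $\langle\xi\rangle^{2s}$ anyway. With $\langle\xi\rangle^{2s}\le\langle\xi\rangle^{m_1}\langle\xi\rangle^{m_2}$ you get $|g(\cdot,0)|_{H^s}^2\le 2\|g\|_{H^{m_1}}\|\pl_{x_2}g\|_{H^{m_2}}$ for all real $s$ at once.

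\textbf{Auxiliary routes.} You should drop both, since neither works as described.
\begin{itemize}
\item \emph{Interpolation.} Interpolating the target space $H^s(\pl\Omega)$ between integer cases cannot give the multiplicative bound. Any integer estimate with $s=1$ carries at least a factor $\|f\|_{H^2}$. So for $s=\tfrac12$ you get something like $|f|^2_{H^{1/2}(\pl\Omega)}\lesssim\|f\|_{H^1}\|f\|_{L^2}^{1/2}\|f\|_{H^2}^{1/2}$, never the claimed $\|f\|_{H^1}^2$ for $m_1=1$, $m_2=0$. That is exactly the case the paper uses to put $u$ in $L^4(\pl\Omega)$ in \eqref{6-8}.
\item \emph{Collar argument.} Redistribute only tangential derivatives $\tau\cdot\nabla$. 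Since $\tau\cdot n=0$ on $\pl\Omega$, moving them produces no boundary terms at all, only commutators with the extended fields. Moving general $\pl^\alpha$ produces boundary integrals of the same type as the one being estimated. It is not clear that ``induction on $s$'' absorbs them.
\end{itemize}
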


	Next, let $\Omega$ be as in Theorem \ref{thm1} and let $\mathbb{D}$ denote the unit disc. By the Riemann mapping theorem (see \cite{SS2003}, Chapter 9), there exists a conformal mapping $\psi = (\psi_1,\psi_2): \bar{\Omega} \to \bar{\mathbb{D}}$ whose components $\psi_1$ and $\psi_2$ satisfy the Cauchy-Riemann equations:
	\begin{equation}\label{psi_1}
		\left\{
		\begin{array}{lll}
			\pl_1 \psi_1=\pl_2 \psi_2\\[2mm]
			\pl_2 \psi_1=- \pl_1 \psi_2.
		\end{array}
		\right.
	\end{equation}\\
	
	The conformal mapping $\psi(x)$ also has the following properties:

	\begin{lem} (see \cite{SS2003,W1935,W1961} )\label{lem-cauchy-riemann}
		The conformal mapping $\psi = (\psi_1,\psi_2): \bar{\Omega} \to \bar{\mathbb{D}}$ is smooth and has the following properties:
		\begin{itemize}
			\item $\psi(x)$ is a one-to-one holomorphic mapping from $\Omega$ to $\mathbb{D}$ and maps the boundary $\pl \Omega$ onto the boundary $\pl \mathbb{D}$.
			\item For any integer $k=0,1,2$, there exists some constant $C$ depending only on $\Omega$ and $k$ such that
			$$|\nabla^k \psi(x)|\leq C, \quad \forall x \in \bar{\Omega}.$$
			Consequently,
			$$|\nabla^k \psi(x) - \nabla^k \psi(y)|\leq C|x-y|, \quad \forall x,y \in \bar{\Omega}.$$
			\item There exist two positive constants $c_1,c_2$ such that
			$$c_1|x-y|\leq |\psi(x)-\psi(y)|\leq c_2|x-y|.$$
			\item The mapping $\psi$ preserves angles. More precisely, for any
			$x\in\overline{\Omega}$ and $v,w\in\mathbb R^2$,
			\[
			\langle D\psi(x)v,D\psi(x)w\rangle
			=
			|\psi'(x)|^2\langle v,w\rangle.
			\]
			Consequently,
			\[
			v\perp w
			\quad\Longrightarrow\quad
			D\psi(x)v\perp D\psi(x)w.
			\]
			\item For any harmonic function $h(y)$ in $\mathbb{D}$, the composition $h(\psi(x))$ is harmonic in $\Omega$; that is,
			$$\Delta h(\psi(x))=0 \quad \mbox{in} \quad \Omega,$$
			provided
			$$\Delta h(y)=0 \quad \mbox{in} \quad \mathbb{D}.$$
		\end{itemize}
		
	\end{lem}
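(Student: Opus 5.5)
The statement is Lemma \ref{lem-cauchy-riemann}, which collects properties of the Riemann map $\psi:\bar\Omega\to\bar{\mathbb D}$. I will verify the five items in turn, working in complex notation $z=x_1+ix_2$ and $\psi=\psi_1+i\psi_2$, with $\psi'(z)=\pl_1\psi_1+i\pl_1\psi_2$.

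\textbf{Bijectivity and boundary correspondence.} These come directly from the Riemann mapping theorem in \cite{SS2003}. The fact that $\psi$ extends to a homeomorphism $\bar\Omega\to\bar{\mathbb D}$ with $\pl\Omega\mapsto\pl\mathbb D$ is Carath\'eodory's theorem. It applies because $\pl\Omega$ is a Jordan curve, being $C^{2,1}$.

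\textbf{Regularity and bi-Lipschitz bounds.} The derivative bounds up to $k=2$ are the main obstacle, since they require boundary regularity of conformal maps. Here I would invoke the Kellogg--Warschawski theorem \cite{W1935,W1961}: if $\pl\Omega\in C^{m,\alpha}$ with $0<\alpha<1$, then $\psi\in C^{m,\alpha}(\bar\Omega)$ and $\psi'\neq0$ on $\bar\Omega$. Since $C^{2,1}\subset C^{2,\alpha}$ for every $\alpha<1$, we get $\psi\in C^{2,\alpha}(\bar\Omega)$. Hence $|\nabla^k\psi|\le C$ for $k=0,1,2$. The Lipschitz estimate for $\nabla^k\psi$ then follows from the mean value theorem when $k\le1$. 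For $k=2$ I only have H\"older continuity, so I would either restrict that claim to $k\le1$ or accept $C^{2,\alpha}$ as sufficient for later use.

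For the upper bi-Lipschitz bound $|\psi(x)-\psi(y)|\le c_2|x-y|$, the mean value theorem along a path needs care because $\Omega$ need not be convex. A bounded $C^{2,1}$ domain is uniformly quasiconvex: any two points can be joined by a curve in $\bar\Omega$ of length at most $C|x-y|$. Combining this with $|\nabla\psi|\le C$ gives the bound. For the lower bound, the same argument applied to $\psi^{-1}$ works, since $\psi^{-1}$ has derivative $1/\psi'$, which is bounded because $\psi'\neq0$ on the compact set $\bar\Omega$, and $\mathbb D$ is convex.

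\textbf{Angle preservation.} This follows from the Cauchy--Riemann equations \eqref{psi_1}. They show that $D\psi(x)$ has the form $\begin{pmatrix} a & -b\\ b & a\end{pmatrix}$ with $a=\pl_1\psi_1$ and $b=\pl_1\psi_2$, so it is $|\psi'|$ times a rotation. Consequently $(D\psi)^TD\psi=|\psi'|^2I$, which gives the inner-product identity and hence orthogonality preservation.

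\textbf{Harmonicity of compositions.} For $h$ harmonic on $\mathbb D$, the chain rule gives $\Delta(h\circ\psi)=|\psi'|^2(\Delta h)\circ\psi$. The cross terms cancel and the $\Delta\psi_j$ terms vanish by \eqref{psi_1}, since each component $\psi_j$ is harmonic. Alternatively, locally $h=\mathrm{Re}\,g$ with $g$ holomorphic, and $g\circ\psi$ is holomorphic. Either way $\Delta(h\circ\psi)=0$ in $\Omega$.
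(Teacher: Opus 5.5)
Your proposal is correct and follows the same route as the paper, which proves the lemma only by citation: the Riemann mapping theorem with Carath\'eodory's extension gives the boundary correspondence, the Kellogg--Warschawski theorem gives $\psi\in C^{2,\alpha}(\bar\Omega)$ with $\psi'\neq0$ on $\bar\Omega$ (hence the derivative bounds, and via quasiconvexity of $\Omega$ and $\psi^{-1}$ on the convex disc the bi-Lipschitz bounds), and the Cauchy--Riemann identity $(D\psi)^TD\psi=|\psi'|^2I$ gives conformality and harmonicity of compositions. Your caveat is also justified, for two reasons: bounds on $\nabla^k\psi$ for $k\le2$ yield Lipschitz continuity of $\nabla^k\psi$ only for $k\le1$, and Kellogg--Warschawski fails at Hölder exponent $1$. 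So a $C^{2,1}$ boundary does not in general give a Lipschitz bound for $\nabla^2\psi$, nor smoothness of $\psi$ up to $\partial\Omega$, and the paper's citations do not supply these either; the ``Consequently'' clause should be read with $k\le1$, and ``smooth'' as $C^{2,\alpha}(\bar\Omega)$ and real-analytic in $\Omega$.
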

	
	Finally, the preservation of angles under conformal mappings immediately yields the following result:
	
	\begin{lem}(\cite{FLL2022})\label{lem-conformal}
		Let $n$ be the outward unit normal vector to $\pl \Omega$ at $x_0$. Then
		\begin{equation}\label{conformal}
			n \cdot \nabla \psi(x_0)=|\nabla \psi_1(x_0)|n,
		\end{equation}
		where $n$ is the outward unit normal vector to $\pl \mathbb{D}$ at $\psi(x_0)$.
	\end{lem}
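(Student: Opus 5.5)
The identity \eqref{conformal} follows from the angle-preservation property in Lemma \ref{lem-cauchy-riemann}, combined with the fact that $\psi$ maps $\pl\Omega$ onto $\pl\mathbb{D}$.

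First we read $n\cdot\nabla\psi(x_0)$ as the vector $\big(n\cdot\nabla\psi_1(x_0),\,n\cdot\nabla\psi_2(x_0)\big)=D\psi(x_0)n$. We write $\tau=n^{\bot}$ for the unit tangent to $\pl\Omega$ at $x_0$. Since $\psi(\pl\Omega)\subset\pl\mathbb{D}$, differentiating along a boundary curve through $x_0$ with tangent $\tau$ shows that $D\psi(x_0)\tau$ is tangent to $\pl\mathbb{D}$ at $\psi(x_0)$. By angle preservation, $D\psi(x_0)n\perp D\psi(x_0)\tau$. Hence $D\psi(x_0)n$ is parallel to the normal $N$ of $\pl\mathbb{D}$ at $\psi(x_0)$, and we can write $D\psi(x_0)n=\kappa N$.

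Next we determine the length. Taking $v=w=n$ in the angle-preservation identity gives $|D\psi(x_0)n|=|\psi'(x_0)|$. By the Cauchy--Riemann equations \eqref{psi_1}, $|\psi'|^2=(\pl_1\psi_1)^2+(\pl_2\psi_1)^2=|\nabla\psi_1|^2$. Therefore $|\kappa|=|\nabla\psi_1(x_0)|$.

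The main point is the sign: we must show $\kappa\ge 0$, i.e. that the image of the outward normal points outward. Since $\psi(\Omega)=\mathbb{D}$, for small $s>0$ the point $x_0-sn$ lies in $\Omega$, so $|\psi(x_0-sn)|<1=|\psi(x_0)|$. This gives
\[
\frac{d}{ds}\Big|_{s=0}|\psi(x_0+sn)|^2=2\,\psi(x_0)\cdot D\psi(x_0)n=2\kappa\ge 0,
\]
using $N=\psi(x_0)$ on the unit circle. Here we use the smoothness of $\psi$ up to the boundary, together with the lower Lipschitz bound $c_1|x-y|\le|\psi(x)-\psi(y)|$, which rules out degeneracy ($|\psi'|>0$). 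Combining the direction, length and sign yields $D\psi(x_0)n=|\nabla\psi_1(x_0)|\,N$, which is \eqref{conformal}.
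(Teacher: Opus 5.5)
Your proof is correct and takes the same route the paper indicates: angle preservation combined with $\psi(\partial\Omega)=\partial\mathbb{D}$. You also supply the details the paper leaves to the citation, namely the length $|D\psi(x_0)n|=|\nabla\psi_1(x_0)|$ via Cauchy--Riemann and the outward orientation via $|\psi|<1$ in $\Omega$. The only cosmetic fix is that the $s$-derivative at $s=0$ should be read as a one-sided derivative from $s<0$, since $x_0+sn\notin\bar\Omega$ for $s>0$.
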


	\begin{rmk}\label{rmk-boundary}
		The boundary condition $u \cdot n=0$ on $\pl \Omega$ implies that, for any $y_0 \in \pl \Omega$, the vectors $\psi(y_0)$ and $(u \cdot \nabla)\psi(y_0)$ are, respectively, an outward normal and a tangent vector to $\pl \mathbb{D}$ at $\psi(y_0)$. Consequently,
		$$u_i(y_0 ) \cdot \pl_i \psi_j(y_0) \cdot \psi_j(y_0)= \langle(u \cdot \nabla)\psi(y_0), \psi(y_0) \rangle=0,$$
		which is an important observation for the subsequent analysis.
	\end{rmk}

	\section{A Priori Estimates (I): Upper Bound for the Density}\label{section3}
	
	In this section and the next, we assume that $(\rho,u, \nabla \Phi)$ is the strong solution to \eqref{origin}-\eqref{boundary2} on $\Omega \times (0,T)$ provided by Lemma \ref{lem-localexistence}.
	We define the effective viscous flux $F$ and the vorticity $\omega$ in the usual manner:
	\begin{equation}\label{Fdefinition}
		F:= (2\mu + \lambda(\rho))\mbox{div}u -P,
	\end{equation}
	and
	\begin{equation}\label{omegadefinition}
		\omega:=\nabla^{\bot} \cdot u= \pl_2 u_1 -\pl_1 u_2.
	\end{equation}
	
	Then, we set
	\begin{equation}\label{A_1}
		A_1^2(t):= 1 + \int_{\Omega}( \omega^2(t) + \frac{F^2(t)}{2\mu + \lambda(\rho)}) \mbox{d} x + \int_{\partial\Omega} A |u|^2 \mbox{d}s,
	\end{equation}
	\begin{equation}\label{A_2}
		A_2^2(t):=\int_{\Omega} \rho(t) |\tilde{u}(t)|^2 \mbox{d}x:=\int_{\Omega}\rho(t) |\dot{u}(t)-\nabla \Phi(t)|^2\mbox{d} x,
	\end{equation}
	and
	\begin{equation}\label{R_T}
		R_T:= 1+ \sup_{0 \leq t \leq T} \|\rho (t)\|_{L^{\infty}}.
	\end{equation}
	
	We now state the standard energy estimate.
	
	\begin{lem}\label{lem1}
		There exists a positive constant $C$ depending only on $\gamma$, $\|\rho_0\|_{L^{\gamma} \cap L^2}$, $\|b(x)\|_{L^2}$, and $\|\sqrt{\rho_0} u_0\|_{L^2}$ such that
		\begin{align}\label{lem1-1}
			& \sup_{0 \leq t \leq T} \int_{\Omega} ( \rho |u|^2 + \rho^{\gamma} + |\nabla \Phi|^2 ) \mbox{d}x + \int_{0}^{T} \int_{\Omega}( (2 \mu + \lambda)|\mbox{div}u|^2+ \mu |\omega|^2 )\mbox{d}x \mbox{d}t \leq C.
		\end{align}
	\end{lem}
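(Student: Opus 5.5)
We would prove Lemma \ref{lem1} by the classical energy method: test the momentum equation with $u$, use the continuity equation for the kinetic and pressure parts, and use the Poisson equation to turn the electric work into the time derivative of $\frac12\int_\Omega|\nabla\Phi|^2\,dx$. All manipulations are justified for the strong solution of Lemma \ref{lem-localexistence}, which has $\rho\ge C_0>0$ and enough regularity.

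\emph{Kinetic and pressure parts.} Multiplying $\eqref{origin}_2$ by $u$ and integrating, the continuity equation together with $u\cdot n=0$ gives
\[
\int_\Omega\rho\dot u\cdot u\,dx=\frac{d}{dt}\int_\Omega\tfrac12\rho|u|^2\,dx,\qquad
\int_\Omega\nabla P\cdot u\,dx=\frac{d}{dt}\int_\Omega\frac{\rho^\gamma}{\gamma-1}\,dx
\]
for $\gamma>1$. For $\gamma=1$ one uses $\rho\log\rho-\rho+1$ instead; the bound on $\int_\Omega\rho^\gamma$ is then trivial from mass conservation.

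\emph{Viscous terms.} Write $\Delta u=\nabla\mbox{div}u-\nabla^\perp\omega$. Integrating by parts with $u\cdot n=0$ and $\mbox{curl}\,u=-Au\cdot n^\perp$ gives
\[
-\int_\Omega\big(\mu\Delta u+\nabla((\mu+\lambda)\mbox{div}u)\big)\cdot u\,dx
=\int_\Omega\big((2\mu+\lambda)(\mbox{div}u)^2+\mu\omega^2\big)\,dx+\mu\int_{\partial\Omega}A|u|^2\,ds.
\]
The boundary term comes from $\int_\Omega\nabla^\perp\omega\cdot u\,dx=\int_\Omega\omega^2\,dx-\int_{\partial\Omega}\omega\,u\cdot n^\perp\,ds$ (up to orientation sign) together with $u=(u\cdot n^\perp)n^\perp$ on $\partial\Omega$. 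Since $A\ge0$, this term is nonnegative and can be dropped.

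\emph{Electric term.} This is the only new point. By the continuity equation and $\partial_n\Phi=0$,
\[
\int_\Omega\rho u\cdot\nabla\Phi\,dx=\int_\Omega\rho_t\Phi\,dx .
\]
Since $b$ is time-independent, $\Delta\Phi_t=\rho_t$ with $\partial_n\Phi_t=0$, so
\[
\int_\Omega\rho_t\Phi\,dx=\int_\Omega\Delta\Phi_t\,\Phi\,dx=-\frac12\frac{d}{dt}\int_\Omega|\nabla\Phi|^2\,dx .
\]
Moving this to the left-hand side gives the identity
\[
\frac{d}{dt}\int_\Omega\Big(\tfrac12\rho|u|^2+\frac{\rho^\gamma}{\gamma-1}+\tfrac12|\nabla\Phi|^2\Big)dx+\int_\Omega\big((2\mu+\lambda)(\mbox{div}u)^2+\mu\omega^2\big)dx+\mu\int_{\partial\Omega}A|u|^2\,ds=0 .
\]
Here $\Phi_t$ is determined only up to constants, which is harmless because $\int_\Omega\rho_t\,dx=0$. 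Integrating in time gives \eqref{lem1-1}.

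\emph{Initial potential energy.} It remains to bound $\|\nabla\Phi_0\|_{L^2}$. Since $\Delta\Phi_0=\rho_0-b$ with Neumann condition and zero compatibility, testing with $\Phi_0-\overline{\Phi_0}$ and applying Poincar\'e gives $\|\nabla\Phi_0\|_{L^2}\le C\|\rho_0-b\|_{L^2}$. This is where the dependence on $\|\rho_0\|_{L^2}$ and $\|b\|_{L^2}$ enters; the constant also depends on $\Omega$, which is implicit in the statement. The only real obstacle is making the sign conventions in the Navier-slip boundary computation and the Poisson term consistent; the rest is routine.
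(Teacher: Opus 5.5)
Your proposal is correct and follows essentially the same route as the paper. You test the momentum equation with $u$, use the pressure identity and the Navier-slip condition to obtain the dissipation plus the nonnegative term $\mu\int_{\partial\Omega}A|u|^2\,ds$, and convert the electric work via $\mbox{div}(\rho u)=-\Delta\Phi_t$ into $-\frac{1}{2}\frac{d}{dt}\int_\Omega|\nabla\Phi|^2\,dx$.

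Your explicit bound $\|\nabla\Phi_0\|_{L^2}\le C\|\rho_0-b\|_{L^2}$ usefully fills in a step the paper leaves implicit. Two small slips are harmless:
\begin{itemize}
\item With the paper's conventions, $\Delta u=\nabla\mbox{div}u+\nabla^{\perp}\omega$ and $\int_\Omega\nabla^{\perp}\omega\cdot u\,dx=\int_{\partial\Omega}\omega\,u\cdot n^{\perp}\,ds-\int_\Omega\omega^2\,dx$. Both of your signs are flipped, so they cancel and your final identity is right.
\item The boundary term in $\int_\Omega\rho u\cdot\nabla\Phi\,dx=\int_\Omega\rho_t\Phi\,dx$ vanishes because $u\cdot n=0$, not because of the Neumann condition. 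The condition $\partial_n\Phi_t=0$ is needed only in the next integration by parts.
\end{itemize}
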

	
	\begin{proof}
		A direct calculation shows that
		\begin{equation}\label{1-1}
			P_t + \mbox{div}(Pu) + (\gamma-1) P \mbox{div}u=0,
		\end{equation}
	by $\eqref{origin}^1$. Integrating with respect to $x$ and then integrating by parts, using the boundary condition \eqref{boundary1}, namely $(u \cdot n)|_{\pl \Omega}=0$, we obtain
	\begin{equation}\label{1-2}
		\frac{d}{dt} \int_{\Omega} \frac{\rho^{\gamma}}{\gamma-1} \mbox{d}x + \int_{\Omega} P\mbox{div}u\mbox{d}x =0.
	\end{equation}
	
	Recall that
	$$\rho u_t + \rho (u \cdot \nabla )u + \nabla P= \nabla ( (2\mu + \lambda(\rho)) \mbox{div}u )+ \mu \nabla^{\bot} \omega + \rho \nabla \Phi.$$
	
	Taking the inner product with $u$ and integrating with respect to $x$, we obtain
	\begin{align}\label{1-3}
		&\int_{\Omega}( \rho u_t \cdot u + \rho (u \cdot \nabla )u \cdot u+ \nabla P\cdot u) \mbox{d}x = \int_{\Omega}(\nabla ( (2\mu + \lambda(\rho)) \mbox{div}u )\cdot u+ \mu \nabla^{\bot} \omega \cdot u + \rho \nabla \Phi \cdot u) \mbox{d}x .
	\end{align}
	
	We have the vector calculus identities
	\begin{equation}\label{vector-1}
		\nabla \cdot (\mbox{div}u u)= |\mbox{div}u|^2+ \nabla \mbox{div}u \cdot u,
	\end{equation}
	and
	\begin{equation}\label{vector-2}
		\nabla^{\bot } \cdot (\omega u  )= |\omega|^2 + \nabla^{\bot} \omega \cdot u.
	\end{equation}
	
	Integrating by parts, applying the boundary conditions, and using $\eqref{origin}^1$ and \eqref{1-2} in \eqref{1-3}, we obtain
	\begin{align}\label{1-4}
		&\int_{\Omega}( \frac{\pl }{\pl t} (\frac{1}{2} \rho |u|^2)+ \frac{\pl}{\pl t}\frac{\rho^{\gamma}}{\gamma-1}) \mbox{d}x = \int_{\Omega}(\nabla ( (2\mu + \lambda(\rho)) \mbox{div}u )\cdot u+ \mu \nabla^{\bot} \omega \cdot u + \rho \nabla \Phi \cdot u) \mbox{d}x \nonumber\\[2mm]
		& = \int_{\Omega}(- (2\mu + \lambda(\rho)) |\mbox{div}u|^2 - \mu | \omega|^2 - \Phi\mbox{div}(\rho u)) \mbox{d}x - \mu \int_{\partial\Omega} A|u|^2 \mbox{d}s \nonumber\\[2mm]
		& = \int_{\Omega}(- (2\mu + \lambda(\rho)) |\mbox{div}u|^2 - \mu | \omega|^2 + \Phi \Delta\Phi_t) \mbox{d}x - \mu \int_{\partial\Omega} A|u|^2 \mbox{d}s \nonumber\\[2mm]
		& = \int_{\Omega}(- (2\mu + \lambda(\rho)) |\mbox{div}u|^2 - \mu | \omega|^2 -\frac{\pl}{\pl t}\frac{1}{2}|\nabla \Phi|^2) \mbox{d}x- \mu \int_{\partial\Omega} A|u|^2 \mbox{d}s ,
	\end{align}
	where we have used $\eqref{origin}^1$ and $\eqref{origin}^3$ to obtain $\mbox{div}(\rho u)=-\rho_t= (-b(x)-\Delta \Phi)_t= -\Delta\Phi_t$, together with the boundary conditions \eqref{boundary1}, with $A$ positive on the boundary, and $(\nabla \Phi \cdot n)|_{\pl \Omega}=0$.
	
	Integrating with respect to $t$ completes the proof.
	
    \end{proof}
    
    Next, we establish estimates for the $L^{\infty}(0,T;L^p{\Omega})$ norm of the density and the electric potential. The proof is similar to the argument in \cite{VK1995}.
    
    \begin{lem}\label{lem2}
    	Let $\beta > \frac{4}{3}$. Then, for any $1 \leq p < \infty$, there is a constant $C$ depending on $\Omega$, $T$, $\mu$, $\beta$, $\gamma$, $\|b(x)\|_{L^{\infty}}$, $\|\rho_0\|_{L^{\infty}}$, and $\|u_0\|_{H^1}$ such that
    	\begin{equation}\label{lem2-rho}
    		\sup_{0 \leq t \leq T}\|\rho(t)\|_{L^p} \leq C(T) p^{\frac{2}{\beta-1}},
    	\end{equation}
    	and
    	\begin{equation}\label{lem2-phi}
    		\sup_{0 \leq t \leq T}\|\nabla \Phi(t) \|_{L^p} \leq C(T) p^{\frac{2}{\beta-1}},
    	\end{equation}
    	Here and below, $C$ denotes a generic positive constant depending on $\Omega$, $T$, $\mu$, $\beta$, $\gamma$, $\|b(x)\|_{L^{\infty}}$, $\|\rho_0\|_{L^{\infty}}$, and $\|u_0\|_{H^1}$; we write $C(\alpha)$ to indicate that $C$ also depends on $\alpha$.
    \end{lem}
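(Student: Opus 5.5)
We follow the Vaigant--Kazhikhov strategy as adapted in \cite{FLL2022}, modified to handle the Poisson force. We write the momentum equation as
\[
(\rho u)_t+\mbox{div}(\rho u\otimes u)=\nabla F+\mu\nabla^\perp\omega+\rho\nabla\Phi ,
\]
apply $\mbox{div}$ to it, and invert the Laplacian with the zero-mean Neumann operator $\GN$ introduced in the Introduction. This produces a decomposition
\[
F=\xi_t+\eta+h+\zeta+\av F ,
\]
where the pieces are as follows. The term $\xi=\GN(\mbox{div}(\rho u))$ satisfies $\|\xi\|_{W^{1,p}}\le C\|\rho u\|_{L^p}$. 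The term $\eta$ solves $\Delta\eta=\mbox{div}\,\mbox{div}(\rho u\otimes u)$; by the Neumann duality argument (test against $\GN g$ with $g\in L^{p'}$ and use $\|\nabla^2\GN g\|_{L^{p'}}\le Cp\|g\|_{L^{p'}}$ for $p'$ near $1$), it satisfies $\|\eta\|_{L^p}\le Cp\|\rho|u|^2\|_{L^p}$. The harmonic correction $h$ absorbs the boundary term $\mu\partial_\tau(A u\cdot\tau)$ coming from the Navier-slip condition (the identity $\nabla^\perp\omega\cdot n=\partial_\tau\omega$ together with $\omega=-Au\cdot n^\perp$). Duality against the Neumann problem, combined with Proposition \ref{lem-boundary-key} and Lemma \ref{lem-poincare}, gives $\|h\|_{L^p}\le C\sqrt p\|u\|_{H^1}$. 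Finally, $\zeta=\GN(\mbox{div}(\rho\nabla\Phi))$ is the electric potential term, with $\|\zeta\|_{L^p}\le C\|\rho\nabla\Phi\|_{L^{p}}$ after a slight loss of integrability; this is handled via Sobolev on $\nabla\zeta$.

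Next we substitute this decomposition into the transport equation
\[
\frac{D}{Dt}\big(\theta(\rho)-\xi\big)+P=-\eta-h-\zeta-\av F+u\cdot\nabla\xi ,
\]
where $\theta(\rho)=2\mu\log\rho+\rho^\beta/\beta$. Multiply by $\rho\,[(\theta(\rho)-\xi)_+]^{2p-1}$-type weights, or more simply derive an $L^{p}$ bound for $\rho^\beta$ along characteristics as in \cite{HL2016}, and integrate using the continuity equation. The mean $\av F$ is controlled because $\int F$ equals $\int\lambda\,\mbox{div}u$ up to bounded terms, which is estimated by Lemma \ref{lem1}. The quadratic terms are controlled using $\|u\|_{L^{r}}\le C\sqrt r\|\nabla u\|_{L^2}$ and the energy bound on $\int_0^T\|\nabla u\|_{L^2}^2$. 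For the electric term, elliptic estimates for $\Delta\Phi=\rho-b$ with Neumann data give $\|\nabla\Phi\|_{L^p}\le C\|\nabla\Phi\|_{W^{1,s}}\le C\|\rho-b\|_{L^s}$ for a suitable $s<p$ (Sobolev in 2D), so $\|\rho\nabla\Phi\|_{L^p}\le C\|\rho\|_{L^{2p}}^2+C$. We then close a Gronwall-type inequality for $\|\rho\|_{L^{p\beta}}$. Tracking the powers of $p$ (the factor $p$ from $\eta$ against $\rho^\beta$ dominating lower powers) and using $\beta>4/3$ to absorb the terms of intermediate degree via Young's inequality yields
\[
\sup_t\|\rho\|_{L^{p}}\le C p^{2/(\beta-1)} .
\]
The bound for $\nabla\Phi$ in Lemma \ref{lem2} then follows from the same elliptic estimate, $\|\nabla\Phi\|_{L^p}\le C\|\rho-b\|_{L^{2p/(p+2)}}$, after which the growth in $p$ is inherited.

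We expect the main obstacle to be the growth-rate bookkeeping in $p$ in the presence of the nonlinear electric term. The product $\rho\nabla\Phi$ is quadratic in the density, so feeding it back could lose the polynomial rate. To handle this we would first establish a crude bound $\|\nabla\Phi\|_{L^\infty_tL^r}\le C$ for fixed $r>2$, using $\|\rho\|_{L^\gamma}$ from energy and interpolation. We would then treat $\rho\nabla\Phi$ as at most $\|\rho\|_{L^{p+\epsilon}}$ times a constant, so that it is linear in $\rho$ and dominated by the $\eta$ term. A secondary difficulty is justifying the duality constant $Cp$ for $\GN$ near $L^1$ on a $C^{2,1}$ domain; for this we invoke the standard $W^{2,r}$ Neumann estimate with constant $O(r)$ as $r\to\infty$ and pass to the dual exponent.
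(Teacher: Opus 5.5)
Your plan is the paper's proof: the same Vaigant--Kazhikhov weighted estimate with weight $\rho f^{2m-1}$, $f=(\xi+\theta(\rho))_+$, built on the zero-mean Neumann pieces $\xi$ and $\eta$ (with the duality bound $\|\eta\|_{L^a}\le Ca\|\rho|u|^2\|_{L^a}$), the harmonic correction $h$ with $\|h\|_{L^a}\le C\sqrt a\,\|u\|_{H^1}$, and the electric piece (your $\zeta$, the paper's $K$). A few minor points. First, the transported quantity must be $\theta(\rho)+\xi$, not $\theta(\rho)-\xi$. Second, your preliminary bound $\sup_t\|\nabla\Phi\|_{L^r}\le C$ for some $r>2$ (from $\sup_t\|\rho\|_{L^\gamma}\le C$, $\gamma>1$, and Neumann regularity) is what justifies the linear-in-$\rho$ bound on $\|\rho\nabla\Phi\|_{L^2}$ that the paper uses for $K$. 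Third, for the $\nabla\Phi$ estimate nothing needs to be ``inherited'': once $\sup_t\|\rho\|_{L^3}\le C$, $W^{2,3}$ Neumann regularity already gives $\sup_t\|\nabla\Phi\|_{L^\infty}\le C$, which avoids the $p$-dependent embedding constant in your $W^{1,2p/(p+2)}\hookrightarrow L^p$ route.
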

    
    \begin{proof}
    	
    	We first introduce some quantitative results for the Neumann problem.
    	Write $\av f=|\Om|^{-1}\int_\Om f$ and define $\GN f$ by
    	\begin{equation}\label{neumann-inverse}
    		\Delta\GN f=f-\av f,\qquad \partial_n\GN f=0,
    		\qquad\int_\Om\GN f \mbox{d}x=0.
    	\end{equation}
    	Thus this inverse is always applied after removal of the constant mode.
    	We first justify existence and uniqueness and explain the role of the subtracted average. For $1<q<\infty$, set
    	\[
    	V=\left\{v\in H^1(\Om):\int_\Om v \mbox{d}x=0\right\},
    	\qquad \norm v{V}:=\norm{\nabla v}{L^2}.
    	\]
    	
    	Poincar\'e's inequality shows that this norm is equivalent to the $H^1$ norm. The weak formulation for $w=\GN f$ is
    	\[
    	\int_\Om\nabla w\cdot\nabla v \mbox{d}x
    	=-\int_\Om(f-\av f)v \mbox{d}x,\qquad v\in V.
    	\]
    	The right-hand side is bounded on $V$: in two dimensions
    	$H^1(\Om)\hookrightarrow L^{q'}(\Om)$ for every finite
    	$q'=q/(q-1)$, and
    	$\norm{f-\av f}{L^q}\le2\norm f{L^q}$.
    	The bilinear form on the left is coercive on $V$. The Lax--Milgram theorem therefore gives a unique $w\in V$. Since $\int_\Om(f-\av f)=0$, subtracting the mean of an arbitrary test function extends this identity to every $v\in H^1(\Om)$. This is precisely the homogeneous Neumann boundary condition in the weak sense. Standard Neumann regularity on this domain gives $w\in W^{2,q}(\Om)$. Alternatively, uniqueness follows by testing the difference of two solutions with itself: its gradient vanishes, and its zero average eliminates the remaining constant.
    	Thus \eqref{neumann-inverse} is well defined for every $f\in L^q$;
    	it is not an inverse of $\Delta$ on constants, since $\GN c=0$.
    	
    	We shall use the following quantitative estimates for the Neumann Laplacian. They follow from the Calder\'on--Zygmund theory for singular	integral operators and the standard estimates for the two-dimensional
    	Neumann Green function; see \cite{CZ1952,TKB2013,Stein1970}. Here the explicit dependence on $a$ in \eqref{hessian} and	\eqref{projection} follows from the weak type $(1,1)$ estimate and the $L^2$ boundedness of the corresponding Calder\'on--Zygmund operators, together with the Marcinkiewicz interpolation theorem and duality.
    	
    	\begin{lem}\label{elliptic}
    		
    		There exists a constant $C_\Om$ depending only on $\Omega$ such that:
    		
    		\begin{enumerate}
    			\item If $1<a\le2$, then
    			\begin{equation}\label{hessian}
    				\norm{D^2\GN f}{L^a}\le {C_\Om\over a-1}\norm f{L^a}.
    			\end{equation}
    			\item If $z$ is the zero-mean weak solution of
    			\[
    			\int_\Om\nabla z\cdot\nabla\varphi
    			=\int_\Om v\cdot\nabla\varphi\qquad(\varphi\in H^1(\Om)),
    			\]
    			then for $2\le a<\infty$,
    			\begin{equation}\label{projection}
    				\norm{\nabla z}{L^a}\le C_\Om a\norm v{L^a}.
    			\end{equation}
    			Equivalently, this corresponds to
    			\[
    			\Delta z=\Div v,\qquad \partial_n z=v\cdot n
    			\]
    			in the weak sense.
    			\item The zero-mean Neumann Green function, with sign convention
    			$\Delta_yN(x,y)=\delta_x(y)-|\Om|^{-1}$, satisfies
    			\begin{equation}\label{green-gradient}
    				|\nabla_yN(x,y)|\le {C_\Om\over |x-y|}\quad(x\ne y).
    			\end{equation}
    		\end{enumerate}
    		
    	\end{lem}

    	Recalling the definitions $F=(2\mu + \lambda(\rho))\mbox{div}u -P$ and $\omega=\pl_2 u_1- \pl_1 u_2$, we have
    	\begin{equation}\label{2-1}
    		\pl_t ( \mbox{div}(\rho u) ) + ( \pl_{11}(\rho u_1^2) + 2 \pl_1 \pl_2 (\rho u_1 u_2) + \pl_{22}( \rho u_2^2 ) ) = \Delta F + \rho \Delta \Phi + \nabla \rho \cdot \nabla \Phi.
    	\end{equation}
    	
    	Let $\xi$ solve
    	\begin{equation}\label{xi}
    		\Delta\xi=\mbox{div}(\rho u),\qquad (\partial_n\xi)|_{\pl \Omega}=0,
    		\qquad \int_{\Omega} \xi \mbox{d}x=0.
    	\end{equation}
    	Compatibility follows from $\rho (u\cdot n)=0$ on the boundary $\pl \Omega$. Define $\eta$ by
    	\begin{equation}\label{eta}
    		\Delta\eta=\mbox{div}\mbox{div} G:= \mbox{div}\mbox{div}(\rho u \otimes u),\qquad
    		\partial_n\eta|_{\pl \Omega}=((\mbox{div} G)\cdot n)|_{\pl \Omega},
    		\qquad\int_{\Omega}\eta \mbox{d}x=0.
    	\end{equation}
    	This Neumann problem is compatible by the divergence theorem. Since $u$
    	is tangent to the boundary,
    	\begin{align}
    		(\mbox{div} G)\cdot n
    		&=\rho((u\cdot\nabla)u)\cdot n
    		=-\rho\,u\cdot((u\cdot\nabla)n)
    		\quad\text{on }\partial\Omega.\label{curvature}
    	\end{align}
    	In particular, this datum is not in general zero.
    	
    	For completeness, the $L^a$ construction of \eqref{eta} can be carried out without estimating any boundary trace of $G$. For smooth $G$ with $Gn|_{\pl \Omega} = ((\rho u) u \cdot n)|_{\pl \Omega}=0$ and any smooth $\varphi$ with $\partial_n\varphi=0$, two integrations by parts give
    	\begin{align}
    		\int_{\Omega}\eta\Delta\varphi \mbox{d}x
    		&= \int_{\pl \Omega} \nabla \varphi \cdot n \eta \mbox{d}s - \int_{\Omega} \mbox{div} G \cdot \nabla \varphi \mbox{d}x =\int_{\Omega}\varphi\mbox{div}\mbox{div} G \mbox{d}x
    		-\int_{\partial\Omega}\varphi(\mbox{div} G)\cdot n \mbox{d}s\notag\\
    		& = -\int_{\Omega}\nabla\varphi\cdot\mbox{div} G \mbox{d}x + \int_{\partial\Omega} \varphi \mbox{div} G \cdot n \mbox{d}s-\int_{\partial\Omega}\varphi(\mbox{div} G)\cdot n \mbox{d}s\notag\\ 
    		&=\int_{\Omega}D^2\varphi : G \mbox{d}x - \int_{\partial\Omega} (Gn) \cdot \nabla \varphi \mbox{d}s
    		=\int_{\Omega} G:D^2\varphi \mbox{d}x.\label{eta-duality}
    	\end{align}
    	
    	If $a\ge2$, $a'=a/(a-1)$, and $f\in L^{a'}$, take
    	$\varphi=\mathcal{N} f$. The subtracted average must first be retained:
    	\begin{align*}
    		\int_{\Omega}\eta(f- \bar{f}) \mbox{d}x
    		&=\int_{\Omega}\eta\Delta\mathcal{N} f \mbox{d}x
    		=\int_{\Omega} G:D^2\mathcal{N} f \mbox{d}x,\\
    		\int_{\Omega}\eta(f-\bar{f}) \mbox{d}x
    		&=\int_{\Omega}\eta f \mbox{d}x-\bar{f}\int_{\Omega}\eta \mbox{d}x
    		=\int_{\Omega}\eta f \mbox{d}x.
    	\end{align*}
    	Thus $\bar{f}$ disappears because $\eta$ has zero average; the test
    	function $f$ itself need not have zero average. H\"older's inequality and
    	\eqref{hessian}, with $1<a'\le2$, give
    	\begin{align*}
    		\left|\int_{\Omega}\eta f \mbox{d}x\right|
    		&\le\| G\|_{L^a}\|D^2\mathcal{N} f\|_{L^{a'}}\le\frac{C}{a'-1}\| G\|_{L^a} \| f\|_{L^{a'}}
    		=C(a-1)\| G\|_{L^a}\|f\|_{L^{a'}}.
    	\end{align*}
    	Taking the supremum over $\|f\|_{L^{a'}}\le1$ gives
    	$\|\eta\|_{L^a}\le C(a-1)\| G\|_{L^a}$.
    	To make this duality argument explicit, if $\eta\ne0$, the admissible test function
    	\[
    	f_\eta=\frac{|\eta|^{a-2}\eta}{\|\eta\|_{L^a}^{a-1}}
    	\]
    	satisfies $\|f_\eta\|_{L^{a'}}=1$ and $\int_{\Omega}\eta f_\eta \mbox{d}x=\|\eta\|_{L^a}$; the reverse inequality for all admissible test functions follows from H\"older's inequality.
    	
    	There is no need to assume the desired $L^a$ regularity in this argument.
    	For $G\in L^a$, define
    	$\Lambda_G(f)=\int_{\Omega} G:D^2\mathcal{N} f \mbox{d}x$ on $L^{a'}$.
    	The preceding bound and $L^p$ duality produce a unique representing
    	$\eta\in L^a$. Since $\mathcal{N}1=0$, testing with $1$ gives
    	$\int_{\Omega}\eta\mbox{d}x=0$. For regular $G$ with $Gn=0$, the integration by
    	parts calculation identifies this representative with \eqref{eta}.
    	Finally, with the Frobenius matrix norm,
    	$|\rho u\otimes u|=\rho|u|^2$. We have proved
    	\begin{equation}\label{etaLp}
    		\|\eta\|_{L^a}\le Ca\|\rho|u|^2\|_{L^a},\qquad a\ge2.
    	\end{equation}

    	Therefore, following the argument in \cite{VK1995}, we obtain the following estimates for the solutions of \eqref{xi} and \eqref{eta}:
    	\begin{align}\label{2-4}
    		&\|\nabla \xi \|_{L^{2m}} \leq m C\|u\|_{L^{2mk}} \|\rho \|_{L^{\frac{2mk}{k-1}}},\nonumber\\[2mm]
    		& \|\nabla \xi \|_{L^{2-r}} \leq C\|\sqrt{\rho} u\|_{L^2}\|\rho \|^\frac{1}{2}_{L^{\frac{2-r}{r}}}, \nonumber\\[2mm]
    		& \| \eta\|_{L^{2m}}\le  m C\|u\|_{L^{4mk}}^2 \|\rho \|_{L^{\frac{2mk}{k-1}}}.
    	\end{align}
    	where $C$ is a positive constant independent of $k>1$, $m \geq 1$, and $r$ with $1 \geq 2r \geq 0$.

    	Moreover, we obtain (\cite{VK1995}, Lemma 1-3)
    	\begin{align}\label{2-5}
    		&\|u\|_{L^{2m}} \leq m^{\frac{1}{2}} C \|\nabla u\|_{L^2}, \quad m>2,\nonumber\\[2mm]
    		& \|\xi \|_{L^{2m}} \leq m^{\frac{1}{2}} C \|\nabla \xi \|_{L^{\frac{2m}{m+1}}}, \quad m>2,
    	\end{align}
    	where $C$ is independent of $m$.
    	
    	Since $\|\nabla u\|^2_{L^2} \leq \int_{\Omega}( (2 \mu + \lambda)|\mbox{div}u|^2+ \mu |\omega|^2 )\mbox{d}x :=\psi(t),$ we infer that
    	\begin{align}\label{2-6}
    		&\| \xi \|_{L^{2m}} \leq m^{\frac{1}{2}} C \|\rho \|^{\frac{1}{2}}_{L^{m}},\nonumber\\[2mm]
    		& \|\nabla \xi \|_{L^{2m}} \leq m^{\frac{3}{2}} k^{\frac{1}{2}} C \|\rho \|_{L^{\frac{2mk}{k-1}}}(\psi(t))^{\frac{1}{2}},\nonumber\\[2mm]
    		& \|\eta \|_{L^{2m}} \leq  m^{2} k C \psi(t) \|\rho \|_{L^{\frac{2mk}{k-1}}},
    	\end{align}
    	for $m>2$ and $k>1$.
    	
    	Using the definitions of $\xi (x,t)$ and $\eta(x,t)$, we obtain
    	\begin{equation}\label{2-7}
    		\Delta( \xi_t + \eta - F -K + \frac{1}{|\Omega|}\int_{\Omega} F \mbox{d}x)=0,
    	\end{equation}
    	where we define
    	\begin{equation}\label{2-8}
    		\Delta K = \mbox{div}( \rho \nabla \Phi), \quad \int_{\Omega} K \mbox{d}x=0, \quad (\nabla K \cdot n)|_{\pl \Omega}=0.
    	\end{equation}

    	The boundary values of $\xi (x,t)$ and $\eta(x,t)$, together with the boundary conditions for $K$, we have
    	$$ \pl_n(\xi_t + \eta - F -K + \frac{1}{|\Omega|}\int_{\Omega} F \mbox{d}x)|_{\pl \Omega}= \mu \pl_{\tau}(A (u \cdot \tau))|_{\pl \Omega},$$
    	therefore by the uniqueness of the zero-mean solution of the Neumann
    	problem, imply
    	\begin{equation}\label{2-9}
    		\xi_t + \eta -F -K + \frac{1}{|\Omega|} \int_{\Omega} F \mbox{d}x=h.
    	\end{equation}
    	
    	For each fixed time for which $u\in H^1(\Om)$, define $h$ as the
    	zero-mean weak solution of
    	\begin{equation}\label{h}
    		\Delta h=0,\qquad
    		(\partial_nh)|_{\pl \Omega}=(\mu\partial_\tau\big(A(u\cdot\tau)\big))|_{\pl \Omega},
    		\qquad\int_\Om h \mbox{d}x=0.
    	\end{equation}
    	This definition requires both compatibility and a solvability argument.
    	Write $g_\partial=A(u\cdot\tau)|_{\partial\Om}$. The trace theorem
    	gives $g_\partial\in H^{1/2}(\partial\Om)$, so
    	$k=\mu\partial_\tau g_\partial\in H^{-1/2}(\partial\Om)$.
    	Tangential differentiation on the closed boundary gives
    	\[
    	\langle k,1\rangle_{\partial\Om}
    	=-\mu\langle g_\partial,\partial_\tau1\rangle_{\partial\Om}=0.
    	\]
    	For smooth data this is simply the integral of a derivative around a
    	closed curve. It is exactly the Neumann compatibility condition
    	$\int_{\partial\Om}\partial_nh\mbox{d}s=\int_\Om\Delta h \mbox{d}x=0$.
    	
    	We now give a direct construction that also yields the required estimate.
    	Choose once and for all a vector field
    	$\mathcal{B} \in W^{1,\infty}(\Om;\mathbb R^2)$ with boundary trace $A\tau$,
    	and put $g= \mathcal{B} \cdot u\in H^1(\Om)$. Such a fixed extension exists by
    	the smoothness of the boundary and $A$. Only its boundary values are
    	prescribed; it need not be a unit tangent field inside $\Om$.
    	For smooth $g$ the orientation convention gives
    	\[
    	\Div\nabla^\perp g=0,\qquad
    	n\cdot\nabla^\perp g
    	=n_1\partial_2g-n_2\partial_1g=-\partial_\tau g.
    	\]
    	Integration by parts therefore gives the weak problem
    	\begin{equation}\label{hweak}
    		\int_\Om\nabla h\cdot\nabla\varphi \mbox{d}x
    		=-\mu\int_\Om\nabla^\perp g\cdot\nabla\varphi \mbox{d}x,
    		\qquad \varphi\in H^1(\Om).
    	\end{equation}
    	The same identity holds for $g\in H^1$ by approximation and the
    	trace duality just described. The functional on its right is bounded:
    	\[
    	\left|\mu\int_\Om\nabla^\perp g\cdot\nabla\varphi \mbox{d}x\right|
    	\le\mu\norm{\nabla g}{L^2}\norm{\nabla\varphi}{L^2}.
    	\]
    	On the zero-mean Hilbert space $V$, the left-hand bilinear form is coercive. The Lax--Milgram theorem yields a unique $h\in V$ satisfying \eqref{hweak} for test functions in $V$; subtracting the mean of a test function extends the identity to all of $H^1$. For compactly supported test functions, the right-hand side vanishes since $\Div\nabla^\perp g=0$, proving $\Delta h=0$ in distributions. The identity for general tests then identifies the weak normal derivative with $\mu\partial_\tau g_\partial$. Thus it solves \eqref{h}. If two zero-mean solutions existed, their difference $d$ would satisfy $\int_\Om|\nabla d|^2 \mbox{d}x=0$, and hence $d=0$.
    	
    	The construction is independent of the interior extension. Two choices
    	of $g$ with the same trace differ by $v\in H^1_0(\Om)$, and
    	$\int_\Om\nabla^\perp v\cdot\nabla\varphi \mbox{d}x=0$ for every
    	$\varphi\in H^1(\Om)$, by approximation of $v$ with compactly
    	supported smooth functions. Their weak problems are identical.
    	
    	For the estimate, take $\varphi=h$ in \eqref{hweak}:
    	\begin{align*}
    		\norm{\nabla h}{L^2}^2
    		&=-\mu\int_\Om\nabla^\perp g\cdot\nabla h \mbox{d}x \le\mu\norm{\nabla g}{L^2}\norm{\nabla h}{L^2}.
    	\end{align*}
    	Dividing by $\norm{\nabla h}{L^2}>0$ (the zero case is immediate) and applying Poincar\'e's inequality for functions with zero mean gives
    	\[
    	\norm h{H^1}\le C_\Om\norm{\nabla h}{L^2}
    	\le C_\Om\mu\norm{\nabla g}{L^2}.
    	\]
    	The product rule, with $\mathcal{B}$ fixed, gives
    	\begin{align*}
    		\partial_i g&=\sum_j\big((\partial_i \mathcal{B}_j)u_j+\mathcal{B}_j\partial_i u_j\big),\\
    		\norm{\nabla g}{L^2}
    		&\le C\big(\norm{D\mathcal{B}}{L^\infty}\norm u{L^2}
    		+\norm \mathcal{B}{L^\infty}\norm{\nabla u}{L^2}\big)
    		\le C_{\Om,A}\norm u{H^1}.
    	\end{align*}
    	Combining this with Lemma \ref{lem-poincare} gives
    	\[
    	\norm h{L^a}\le C\sqrt a\,
    	\norm h{L^2}^{2/a}\norm h{H^1}^{1-2/a}
    	\le C\sqrt a\,\norm h{H^1}.
    	\]
    	Consequently,
    	\begin{equation}\label{hLp}
    		\norm h{L^a}\le C\sqrt a\| u\|_{H^1} ,\qquad a\ge2.
    	\end{equation}

    	Applying $\eqref{origin}^1$, we obtain
    	\begin{equation}\label{2-10}
    		\xi_t + (2\mu + \lambda(\rho))\frac{1}{\rho}(\rho_t + \nabla \rho \cdot u) +P + \eta - K + \frac{1}{|\Omega|}\int_{\Omega} F \mbox{d}x - h=0.
    	\end{equation}
    	
    	Define the function $\theta(\rho)$ on $(0,+\infty)$ by
    	\begin{equation}\label{theta}
    		\theta(\rho)=\int_{1}^{\rho}\frac{1}{s}( 2\mu + \lambda(\rho))\mbox{d}s=2\mu \ln \rho + \frac{1}{\beta}(\rho^{\beta}-1),
    	\end{equation}
    	where we used that $\lambda(\rho)=\rho^{\beta}$.
    	
    	We then obtain
    	\begin{equation}\label{2-11}
    		\frac{\pl }{\pl t} ( \xi + \theta(\rho) ) + u \cdot \nabla ( \xi + \theta(\rho)) + P + \eta -K - u \cdot \nabla \xi + \frac{1}{|\Omega|} \int_{\Omega} F \mbox{d}x -h =0.
    	\end{equation}
    	
    	Define $f(x,t)$ by
    	\begin{equation}\label{2-12}
    		f=\max\{ 0 , \xi(x,t) + \theta(\rho(x,t)) \}, \quad (x,t)\in \bar{\Omega_T}
    	\end{equation}
    	
    We multiply equation \eqref{2-11} by $\rho f^{2m-1}$, where $m \geq 4$ is a natural number, and integrate over $\Omega$. Using the continuity equation and the boundary conditions for $u$, we obtain
    	\begin{align}\label{2-13}
    		& \frac{1}{2m} \frac{d}{dt} \int_{\Omega} \rho f^{2m} \mbox{d}x + \int_{\Omega} P \rho f^{2m-1} \mbox{d}x + \int_{\Omega} \rho \eta f^{2m-1} \mbox{d}x - \int_{\Omega}\rho K f^{2m-1} \mbox{d}x\nonumber\\[2mm]
    		& - \int_{\Omega} \rho(u \cdot \nabla) \xi  f^{2m-1} + \frac{1}{|\Omega|}\int_{\Omega} F \mbox{d}x \int_{\Omega} \rho f^{2m-1}\mbox{d}x - \int_{\Omega} \rho h f^{2m-1} \mbox{d}x= 0.
    	\end{align}
    	
    	Define
    	\begin{equation}\label{2-14}
    		Z(t)= ( \int_{\Omega}\rho f^{2m} \mbox{d}x )^{\frac{1}{2m}}.
    	\end{equation}
    	
    	Using the preceding estimates for $\xi$ and $\eta$, the positivity $\rho >0$, and H$\ddot{o}$lder's inequality, we estimate the relevant terms in \eqref{2-13} as follows:
    	
    	\begin{align}\label{2-15}
    		&|\int_{\Omega}\rho \eta f^{2m-1} \mbox{d}x| \leq C \|\rho \|^{\frac{1}{2m}}_{L^{2m\beta+1}} Z^{2m-1}(t) \|\eta\|_{L^{2m+\frac{1}{\beta}}} \leq C m^2 \|\rho \|^{1+\frac{1}{2m}}_{L^{2m\beta+1}} \psi(t) Z^{2m-1}(t),\nonumber\\[2mm]
    		& | \int_{\Omega} \rho (u \cdot \nabla)\xi f^{2m-1} \mbox{d}x| \leq C m^2 \|\rho\|^{1+\frac{1}{2m}}_{L^{2m\beta+1}} \psi(t) Z^{2m-1}(t),\nonumber\\[2mm]
    		& |\frac{1}{|\Omega|}\int_{\Omega} F \mbox{d}x \int_{\Omega} \rho f^{2m-1}\mbox{d}x| \leq C Z^{2m-1}(t)( 1 + (\psi(t))^{\frac{1}{2}} + \|\rho\|^{\frac{\beta}{2}}_{L^{2m\beta+1}} (\psi(t))^{\frac{1}{2}} ).
    	\end{align}

    	Recalling that $\Delta K=\mbox{div}( \rho \nabla \Phi )$, we apply the Poincar\'{e} inequalities (Lemma \ref{lem-poincare} and Lemma \ref{lem-poincare2}), Young's inequality, and H$\ddot{o}$lder's inequality to obtain
    	$$\|K\|_{L^{2m+1/\beta}} \leq C m^{\frac{1}{2}} \|\nabla K\|_{L^2} \leq C m^{\frac{1}{2}} \|\rho \nabla \Phi\|_{L^2} \leq Cm^2\|\rho \|_{L^{2m\beta +1}},$$
    	where we have again used Lemma \ref{lem-poincare}, together with Lemma \ref{lem-div-curl}, the identity $\Delta \Phi =\rho -b(x)$, and the boundedness of $b(x)$.
    	
    	Therefore, we obtain
    	\begin{align}\label{2-16}
    		&|\int_{\Omega}\rho K f^{2m-1}| \leq C m^2 \|\rho \|^{1+\frac{1}{2m}}_{L^{2m\beta+1}} Z^{2m-1}(t).
    	\end{align}
    	
    	For the last term, we have
    	\begin{align}
    		& |\int_{\Omega}\rho h f^{2m-1}| \leq C \|\rho \|^{\frac{1}{2m}}_{L^{2m\beta+1}} Z^{2m-1}(t) \|h\|_{L^{2m+\frac{1}{\beta}}} \nonumber\\[2mm]
    		& \leq C m^{\frac{1}{2}}\|\rho \|^{\frac{1}{2m}}_{L^{2m\beta+1}} Z^{2m-1}(t) \|u\|_{H^1} \leq C m^{\frac{1}{2}} \|\rho \|^{\frac{1}{2m}}_{L^{2m\beta+1}} Z^{2m-1}(t) (\psi(t))^{\frac{1}{2}}.
    	\end{align}
    	
    	Combining the preceding inequalities with the nonnegativity of the second term in \eqref{2-13}, Young's inequality, and the lower bound for the mass, we derive
    	\begin{equation}\label{2-17}
    		Z(t) \leq C( 1+ \int_{0}^{t} m^2 (\psi(t)+1) \|\rho\|^{1+\frac{1}{2m}}_{L^{2m\beta+1}} \mbox{d}\tau+ \int_{0}^{t}(\psi(t))^{\frac{1}{2}}\|\rho\|^{\frac{\beta}{2}}_{L^{2m\beta+1}}\mbox{d}\tau ),
    	\end{equation}
    	for $m \geq 4$.
    	    	
    	Following the argument in \cite{VK1995}(equations (31)--(34)), we obtain
    	\begin{equation}\label{2-18}
    		\|\rho\|^{\beta}_{L^{2m\beta+1}} \leq C (Z(t) +m^{\frac{\beta}{2 \beta-1}}).
    	\end{equation}
    	
    	Observe that the positive constant $C$ in the preceding inequalities is independent of $m \geq 4$. Thus, \eqref{2-17} and \eqref{2-18} imply
    	\begin{align}\label{2-19}
    		\|\rho\|^{\beta}_{L^{2m\beta+1}} \leq C(m^{\frac{\beta}{2 \beta-1}} + m^2 \int_{0}^{t} (\psi(t)+1) \|\rho\|^{1+\frac{1}{2m}}_{L^{2m\beta+1}} \mbox{d}\tau+\int_{0}^{t}(\psi(t))^{\frac{1}{2}}\|\rho\|^{\frac{\beta}{2}}_{L^{2m\beta+1}}\mbox{d}\tau ),
    	\end{align}
    	where $C$ remains independent of $m \geq 4$.
    	
    	Using the fact established in Lemma \ref{lem1} that $\psi (t) \in L^1(0,T)$ and applying Gronwall's inequality, we obtain
    	\begin{equation}\label{2-20}
    		\|\rho\|^{\beta}_{L^{2m\beta+1}} \leq C(m^{\frac{\beta}{2 \beta-1}} + m^2 \int_{0}^{t} (\psi(t)+1) \|\rho\|^{1+\frac{1}{2m}}_{L^{2m\beta+1}} \mbox{d}\tau).
    	\end{equation}
    	
    	Setting $y(t)=m^{-\frac{2}{\beta-1}}\|\rho\|_{L^{2m\beta+1}}(t)$ for $t \in [0,T]$, we obtain
    	\begin{equation}\label{2-21}
    		y^{\beta}(t) m^{\frac{2\beta}{\beta-1}} \leq C(m^{\frac{2\beta}{2 \beta-1}} + m^2m^{(1+\frac{1}{m})(\frac{2}{\beta-1})} \int_{0}^{t} (\psi(t)+1) y^{1+\frac{1}{m}}(\tau) \mbox{d}\tau).
    	\end{equation}
    	
    	Hereafter, quantities of the form $m^{m^{-\alpha}}$, with $\alpha >0$, which are uniformly bounded for $m \geq 1$, are absorbed into the constants. Consequently,
    	\begin{equation}\label{2-22}
    		y^{\beta}(t) \leq C(1 + \int_{0}^{t} (\psi(t)+1) y^{1+\frac{1}{m}} (\tau)\mbox{d}\tau),
    	\end{equation}
    	Since $\beta \geq \frac{4}{3}$ and $\psi(t) \in L^1(0,T)$ by Lemma \ref{lem1}, we have
    	\begin{equation}\label{2-23}
    		y^{\beta}(t) \leq C(1 + \int_{0}^{t} (\psi(t)+1) y^{\beta} (\tau) \mbox{d}\tau),
    	\end{equation} 
    	for $m \geq 4$.
    	
    	Hence
    	$$y(t)\leq C, \quad t\in [0,T].$$
    	
    	This proves \eqref{lem2-rho}, namely,
    	\begin{equation}\label{2-24}
    		\|\rho \|_{L^p} \leq C(T) p^{\frac{2}{\beta-1}}.
    	\end{equation}
    	
    	Estimate \eqref{lem2-phi} follows from \eqref{lem2-rho}, the equation $\Delta \Phi= \rho -b(x)$, and the boundedness of $b$.
    	
    \end{proof}
    
    Before proving Lemma \ref{lem4}, we recall Lemma 3.4 of \cite{FLL2022}, which gives the following bound for $\int_{\Omega} |u|^{\nu} |\nabla u|^2 \mbox{d}x$:
    \begin{lem}\label{lem3}
    	There exist positive constants $\tilde{\nu}$ and $C$, both depending only on $\Omega$, such that, for any $\nu \in (0,\tilde{\nu})$,
    	\begin{equation}\label{lem3-1}
    		\int_{\Omega} |u|^{\nu}|\nabla u|^2 \mbox{d}x \leq C \int_{\Omega} |u|^{\nu} ( |\mbox{div}u|^2+ |\omega|^2 ) \mbox{d}x.
    	\end{equation}
    \end{lem}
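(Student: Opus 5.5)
The plan is to reduce \eqref{lem3-1} to a weighted div--curl estimate for the tangential field $u$, in the spirit of Lemma \ref{lem-important}. Throughout, $u\cdot n=0$ on $\pl\Omega$, which is the only boundary information we use. The starting point is the pointwise identity
\[
|\nabla u|^2=|\mbox{div}u|^2+|\omega|^2+2\,(\pl_1u_2\pl_2u_1-\pl_1u_1\pl_2u_2),
\]
where the last term is $-2\det\nabla u=-\mbox{div}\big((u\cdot\nabla)u-u\,\mbox{div}u\big)$. We multiply by $|u|^\nu$ and integrate.

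The boundary part is handled first. After integrating by parts, the divergence term produces a boundary integral $\int_{\pl\Omega}|u|^\nu\big((u\cdot\nabla)u\cdot n-(u\cdot n)\mbox{div}u\big)\,ds$. The second piece vanishes because $u\cdot n=0$, and the first equals $-\int_{\pl\Omega}|u|^\nu u\cdot((u\cdot\nabla)n)\,ds$ by the same computation as \eqref{curvature}. This is bounded by $C\int_{\pl\Omega}|u|^{\nu+2}ds$, with $C$ depending on the curvature.

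The interior part is the commutator term
\[
\int_\Omega \nabla(|u|^\nu)\cdot\big((u\cdot\nabla)u-u\,\mbox{div}u\big)\,dx ,
\]
whose integrand is pointwise bounded by $2\nu|u|^{\nu}|\nabla u|^2$. Taking $\nu<\tilde\nu$ small, this is absorbed into the left-hand side.

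It remains to control the boundary term $\int_{\pl\Omega}|u|^{\nu+2}ds$. We apply the trace inequality to $w=|u|^{1+\nu/2}$:
\[
\int_{\pl\Omega}w^2\,ds\le C\|w\|_{L^2}\|w\|_{H^1}+C\|w\|_{L^2}^2 .
\]
Since $|\nabla w|\le C|u|^{\nu/2}|\nabla u|$, Young's inequality gives
\[
\int_{\pl\Omega}|u|^{\nu+2}ds\le \varepsilon\int_\Omega|u|^\nu|\nabla u|^2dx+C_\varepsilon\int_\Omega|u|^{\nu+2}dx .
\]
What is left is the lower-order term $\int_\Omega|u|^{\nu+2}$, which must be bounded by $C\int_\Omega|u|^\nu(|\mbox{div}u|^2+|\omega|^2)$ with no additive constant.

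\textbf{Main obstacle.} I expect this last step to be the hardest: a weighted Poincar\'e inequality for tangential fields, $\int_\Omega|u|^{\nu+2}\le C\int_\Omega|u|^\nu|\nabla u|^2$. It is homogeneous of degree $\nu+2$ in $u$, so the natural route is to apply Lemma \ref{lem-poincare2} in a form adapted to $w=|u|^{\nu/2}u$. This field still satisfies $w\cdot n=0$ on $\pl\Omega$, and $|\nabla w|\le C|u|^{\nu/2}|\nabla u|$. Hence
\[
\int_\Omega|u|^{\nu+2}=\|w\|_{L^2}^2\le C\|\nabla w\|_{L^2}^2\le C\int_\Omega|u|^\nu|\nabla u|^2 .
\]
Combining the pieces, choosing $\varepsilon$ and $\nu$ small, and absorbing yields \eqref{lem3-1}.

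A cleaner alternative, which I would check first, is to apply Lemma \ref{lem-important} with $p=2$ directly to $w=|u|^{\nu/2}u$. Here $\mbox{div}w$ and $\mbox{curl}w$ equal $|u|^{\nu/2}\mbox{div}u$ and $|u|^{\nu/2}\omega$ up to error terms of size $C\nu|u|^{\nu/2}|\nabla u|$, and $|\nabla w|\ge(1-C\nu)|u|^{\nu/2}|\nabla u|$. This gives the estimate immediately once $\nu<\tilde\nu$, where $\tilde\nu$ is determined by the constant of Lemma \ref{lem-important}.
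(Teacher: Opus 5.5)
\textbf{Your main route does not close.} The steps through the trace inequality are fine, apart from a harmless sign slip: in fact $-2\det\nabla u=+\mbox{div}\big((u\cdot\nabla)u-u\,\mbox{div}u\big)$. The final absorption, however, is circular. Let $C_\kappa$ be the curvature constant in the boundary term. After the trace--Young step you are left with $C_\kappa C_\varepsilon\int_\Omega|u|^{\nu+2}dx$. Your weighted Poincar\'e inequality turns this into $C_\kappa C_\varepsilon C_P\int_\Omega|u|^{\nu}|\nabla u|^2dx$, which is the left-hand side again, times a constant that is not small. That constant grows as $\varepsilon\to0$ and does not improve as $\nu\to0$, so nothing can be absorbed.

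You stated the right requirement: a bound by $\int_\Omega|u|^\nu(|\mbox{div}u|^2+|\omega|^2)$ with no additive constant. You then bounded the term by the full weighted gradient instead, which is not that bound. Adjusting constants cannot repair this. Your first route uses only $u\cdot n=0$, a curvature bound, the trace theorem and Lemma \ref{lem-poincare2}, and all of these hold on an annulus. On the annulus, however, $u=x^\perp/|x|^2$ is tangential with $\mbox{div}u=\omega=0$ and $\nabla u\neq0$, so \eqref{lem3-1} is false there. The missing ingredient is simple connectivity, and it can only enter through a div--curl estimate such as Lemma \ref{lem-important}. (For convex $\Omega$ the boundary term $-\int_{\pl\Omega}|u|^\nu u\cdot((u\cdot\nabla)n)\,ds$ is non-positive and your first route would work, but not in general.)

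\textbf{Your ``cleaner alternative'' is correct and should be the proof.} The paper does not prove this lemma; it cites Lemma 3.4 of \cite{FLL2022}, and your alternative is essentially the argument there. To make it rigorous:
\begin{itemize}
\item For $\nu>0$ the map $z\mapsto|z|^{\nu/2}z$ is $C^1$ on $\mathbb{R}^2$, with derivative bounded by $(1+\frac\nu2)|z|^{\nu/2}$. The strong solutions under consideration are bounded, so $w=|u|^{\nu/2}u\in H^1$ with $w\cdot n=0$ on $\pl\Omega$.
\item Since $\nabla w-|u|^{\nu/2}\nabla u=u\otimes\nabla|u|^{\nu/2}$, this difference is pointwise bounded by $\frac\nu2|u|^{\nu/2}|\nabla u|$. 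The same bound holds for $\mbox{div}w-|u|^{\nu/2}\mbox{div}u$ and for $\mbox{curl}w-|u|^{\nu/2}\omega$.
\end{itemize}
Lemma \ref{lem-important} with $p=2$ and constant $C_0=C_0(\Omega)$ then gives
\[
(1-\tfrac{\nu}{2})\big\||u|^{\frac{\nu}{2}}\nabla u\big\|_{L^2}\le\|\nabla w\|_{L^2}\le C_0\Big(\big\||u|^{\frac{\nu}{2}}\mbox{div}u\big\|_{L^2}+\big\||u|^{\frac{\nu}{2}}\omega\big\|_{L^2}\Big)+C_0\nu\big\||u|^{\frac{\nu}{2}}\nabla u\big\|_{L^2}.
\]
Hence $\tilde\nu=\min\{\frac12,\frac{1}{4C_0}\}$ and $C=8C_0^2$ work. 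This argument uses exactly the topological input your first route lacks. It also makes the integration by parts, the curvature term and the trace estimate unnecessary, so you can drop them.
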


    \begin{lem}\label{lem4}
    	There exists a constant $C $ depending on $\Omega$, $T$, $\mu$, $\beta$, $\gamma$, $\|b(x)\|_{L^{\infty}}$, $\|\rho_0\|_{L^{\infty}}$, and $\|u_0\|_{H^1}$ such that
    	\begin{equation}\label{lem4-1}
    		\sup_{0 \leq t \leq T} \int_{\Omega} \rho |u|^{2+\nu} \mbox{d}x \leq C,
    	\end{equation}
    	where
    	\begin{equation}\label{lem4-2}
    		\nu := R_T^{-\frac{\beta}{2}}\nu_0,
    	\end{equation}
    	for some suitably small generic constant $\nu_0 \in (0,1)$ depending only on $\mu $ and $\Omega$.
    \end{lem}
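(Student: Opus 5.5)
We test the momentum equation against $(2+\nu)|u|^\nu u$, following Lemma 3.4 of \cite{FLL2022}, and carry the electric force along as an extra source term. Multiplying $\rho\dot u+\nabla P=\nabla((2\mu+\lambda)\mbox{div}u)+\mu\nabla^\bot\omega+\rho\nabla\Phi$ by $(2+\nu)|u|^\nu u$ and using the continuity equation together with $u\cdot n=0$ gives
\begin{align*}
\frac{d}{dt}\int_\Omega\rho|u|^{2+\nu}\mbox{d}x
&+(2+\nu)\int_\Omega|u|^\nu\big((2\mu+\lambda)(\mbox{div}u)^2+\mu\omega^2\big)\mbox{d}x
+(2+\nu)\mu\int_{\partial\Omega}A|u|^{2+\nu}\mbox{d}s\\
&\le C\nu\int_\Omega(2\mu+\lambda)|\mbox{div}u||u|^\nu|\nabla u|\mbox{d}x
+C\nu\int_\Omega|\omega||u|^\nu|\nabla u|\mbox{d}x\\
&\quad+C\int_\Omega P|u|^\nu|\nabla u|\mbox{d}x
+(2+\nu)\int_\Omega\rho|u|^\nu u\cdot\nabla\Phi\mbox{d}x .
\end{align*}
Here the boundary integrals are handled as in Lemma \ref{lem1}. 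The normal part $(2\mu+\lambda)\mbox{div}u\,|u|^\nu u\cdot n$ vanishes. The vorticity boundary term becomes $\mu\int_{\partial\Omega}A|u|^{2+\nu}$ by the slip condition $\omega=-Au\cdot n^\perp$ and $u=(u\cdot n^\perp)n^\perp$. The extra terms come from differentiating $|u|^\nu$, and they carry a factor $\nu$.

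The key step is the viscous cross term. First,
\[
\nu\int_\Omega(2\mu+\lambda)|\mbox{div}u||u|^\nu|\nabla u|\mbox{d}x
\le \tfrac12\int_\Omega(2\mu+\lambda)|u|^\nu(\mbox{div}u)^2\mbox{d}x+C\nu^2 R_T^{\beta}\int_\Omega|u|^\nu|\nabla u|^2\mbox{d}x .
\]
Lemma \ref{lem3} bounds $\int_\Omega|u|^\nu|\nabla u|^2$ by $C\int_\Omega|u|^\nu((\mbox{div}u)^2+\omega^2)$, which requires $\nu<\tilde\nu$. Since $\nu^2R_T^\beta=\nu_0^2$ by the choice \eqref{lem4-2}, taking $\nu_0$ small (depending only on $\mu$ and $\Omega$) lets this be absorbed into the dissipation $2\mu|u|^\nu(\mbox{div}u)^2+\mu|u|^\nu\omega^2$. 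The $\omega$ cross term is absorbed the same way.

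The main obstacle is that this absorption leaves only the coercivity $c\int_\Omega|u|^\nu|\nabla u|^2$, with no $\lambda$-weight. The pressure term and the electric term must therefore be controlled by it. For the pressure term,
\[
\int_\Omega P|u|^\nu|\nabla u|\mbox{d}x\le \varepsilon\int_\Omega|u|^\nu|\nabla u|^2\mbox{d}x+C\int_\Omega\rho^{2\gamma}|u|^\nu\mbox{d}x .
\]
The last integral is at most $C\|\rho\|_{L^{4\gamma}}^{2\gamma}\|u\|_{L^{2\nu}}^{\nu}\le C(1+\|\nabla u\|_{L^2}^2)$ by Lemma \ref{lem2}, Lemma \ref{lem-poincare} and $\nu<1$. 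For the electric term, we split $\rho|u|^{1+\nu}|\nabla\Phi|\le \rho|u|^{2+\nu}+\rho|u|^\nu|\nabla\Phi|^2$. The second piece is bounded by $\|\rho\|_{L^4}\|\nabla\Phi\|_{L^8}^2\|u\|_{L^{2\nu}}^\nu\le C(1+\|\nabla u\|_{L^2}^2)$, again by Lemma \ref{lem2} and Lemma \ref{lem-poincare}. Alternatively one could use $\rho\nabla\Phi$ and the identity $\Delta\Phi=\rho-b$, but the direct Hölder bound should suffice.

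Collecting terms gives
\[
\frac{d}{dt}\int_\Omega\rho|u|^{2+\nu}\mbox{d}x\le C\int_\Omega\rho|u|^{2+\nu}\mbox{d}x+C(1+\|\nabla u\|_{L^2}^2).
\]
Since $\|\nabla u\|_{L^2}^2\in L^1(0,T)$ by Lemma \ref{lem1} (together with Lemma \ref{lem-important}), and $\int_\Omega\rho_0|u_0|^{2+\nu}\le\|\rho_0\|_{L^\infty}\|u_0\|_{L^{3}}^{3}+C\le C$ by Lemma \ref{lem-poincare}, Gronwall's inequality yields \eqref{lem4-1}. The constant is independent of $R_T$, because $R_T$ enters only through $\nu$ and all bounds are uniform for $\nu\in(0,1)$.
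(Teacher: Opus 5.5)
Your proposal is correct and follows the paper's argument. Both test the momentum equation with $|u|^\nu u$, absorb the $\nu$-weighted cross terms using $\nu^2R_T^\beta=\nu_0^2$ and Lemma \ref{lem3}, bound the pressure and Poisson-force terms with Lemma \ref{lem2} and the energy estimate, and close with Gronwall. The only difference is cosmetic: you absorb the pressure term into the weighted dissipation $\int_\Omega|u|^\nu|\nabla u|^2$, whereas the paper splits it into $\int_\Omega|\nabla u|^2$, $\int_\Omega\rho|u|^{2+\nu}$ and a power of $\rho$, and either works.
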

    
    \begin{proof}
    	First, combining Lemma \ref{lem-div-curl}, Lemma \ref{lem2}, and Poincar\'{e}'s inequality (Lemma \ref{lem-poincare}), we obtain
    	\begin{equation}\label{4-1}
    		\|u\|^2_{H^1} \leq C\|\nabla u\|^2_{L^2} \leq C(\|\mbox{div}u\|^2_{L^2}+ \|\omega\|^2_{L^2}),
    	\end{equation}
    	which, together with Lemma \ref{lem1}, implies
    	\begin{equation}\label{4-2}
    		\int_{0}^{T} ( \|u\|^2_{H^1} + A_1^2 ) \mbox{d}t \leq C,
    	\end{equation}
    	where all the terms in $A_1$ are bounded by Lemma \ref{lem1}, with
    	$$A_1^2(t):= 1 + \int_{\Omega}( \omega^2(t) + \frac{F^2(t)}{2\mu + \lambda(\rho)}) \mbox{d} x + \int_{\partial\Omega} A |u|^2 \mbox{d} s.$$
    	
    	Recall that
    	$$\rho u_t + \rho (u \cdot \nabla )u + \nabla P= \nabla ( (2\mu + \lambda(\rho)) \mbox{div}u )+ \mu \nabla^{\bot} \omega + \rho \nabla \Phi.$$
    	
    	Taking the inner product with $|u|^{\nu}u$, integrating over $\Omega$, and integrating by parts, we obtain
    	\begin{align}\label{4-3}
    		&\frac{1}{2+\nu } \frac{d}{dt} \int_{\Omega} \rho |u|^{2+\nu} \mbox{d}x + \int_{\Omega} |u|^{\nu} ( (2\mu + \lambda(\rho))|\mbox{div} u|^2+ \mu \omega^2 )\mbox{d}x - \int_{\Omega} \rho \nabla \Phi \cdot u |u|^{\nu} \mbox{d}x\nonumber\\[2mm]
    		& + \mu \int_{\partial\Omega} A |u|^{2+\nu} \mbox{d} s \leq \nu \int_{\Omega} (2\mu + \lambda(\rho)) |\mbox{div}u||u|^{\nu} |\nabla u|\mbox{d}x + \nu \mu \int_{\Omega}|u|^{\nu} |\nabla u|^2\mbox{d} x + C \int_{\Omega} \rho^{\gamma} | u|^{\nu} |\nabla u| \mbox{d}x \nonumber\\[2mm]
    		&\leq \frac{1}{2} \int_{\Omega} |u|^{\nu}(\mbox{div}u)^2 \mbox{d}x + \frac{\nu_0^2(\mu +1)}{2} \int_{\Omega}|u|^{\nu}|\nabla u|^2 \mbox{d}x + \int_{\Omega} |\nabla u|^2 \mbox{d}x + C \int_{\Omega} \rho |u|^(2+\nu) \mbox{d}x + C \int_{\Omega} \rho^{\frac{4+2\nu}{2-\nu}\gamma-\frac{2\nu}{2-\nu}} \mbox{d}x,
    	\end{align}
    	which, together with \eqref{4-2}, Lemma \ref{lem2}, Lemma \ref{lem3}, and
    	\begin{align}\label{4-4}
    		&-\int_{\Omega} \rho \nabla \Phi \cdot u |u|^{\nu} \mbox{d}x \geq - \int_{\Omega} \rho |u|^{2+\nu} \mbox{d}x - C\int_{\Omega} \rho^{\frac{\nu +2}{\nu +1}} \nabla \Phi^{\nu +2}\mbox{d}x\nonumber\\[2mm]
    		& \geq - \int_{\Omega} \rho |u|^{2+\nu} \mbox{d}x - C \| \rho\|^{2\nu +4}_{L^{2\nu +4}}\geq - \int_{\Omega} \rho |u|^{2+\nu} \mbox{d}x - C,
    	\end{align}
    	yields
    	\begin{equation}\label{4-5}
    		\frac{1}{2+\nu } \frac{d}{dt} \int_{\Omega} \rho |u|^{2+\nu} \mbox{d}x + \int_{\Omega} |u|^{\nu} ( (2\mu + \lambda(\rho))|\mbox{div} u|^2+ \mu \omega^2 )\mbox{d}x \leq C(1+ \int_{\Omega} \rho |u|^{2+\nu} \mbox{d}x ).
    	\end{equation}

    	Gronwall's inequality yields \eqref{lem4-1} and completes the proof of Lemma \ref{lem4}.

    \end{proof}

    We will frequently use the following standard estimate for the $L^p$-norm of $\nabla u$:
    
    \begin{lem}\label{lem5}
    	For $p>2$ and $\epsilon>0$, there exists some positive constant $C(p,\epsilon)$	depending only on $p$, $\epsilon$, and $\Omega$ such that
    	\begin{equation}\label{lem5-1}
    		\|\nabla u\|_{L^p} \leq C(p,\epsilon)R_T^{\frac{1}{2}-\frac{1}{p}+\epsilon}A_1 \left( \frac{A^2_2}{A_1^2} \right)^{\frac{1}{2}-\frac{1}{p}} + C(p,\epsilon)R_T^{\epsilon}A_1.
    	\end{equation}
    \end{lem}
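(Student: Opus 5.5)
We control $\nabla u$ through the effective viscous flux $F$ and the vorticity $\omega$, following the proof of the analogous lemma in \cite{FLL2022}. The only change is that $\rho\dot u$ is replaced by $\rho\widetilde u$. Since $u\cdot n=0$ on $\pl\Omega$, Lemma \ref{lem-important} gives
\[
\|\nabla u\|_{L^p}\le C(\|\mbox{div}u\|_{L^p}+\|\omega\|_{L^p}).
\]
Writing $\mbox{div}u=(F+P)/(2\mu+\lambda(\rho))$ and using $2\mu+\lambda\ge 2\mu$, we get
\[
\|\mbox{div}u\|_{L^p}\le C\|F\|_{L^p}+C\|P\|_{L^p}.
\]
By Lemma \ref{lem2}, $\|P\|_{L^p}\le C(p)$, which is absorbed into $C(p,\epsilon)A_1$ because $A_1\ge1$.

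The core step is an $L^p$ bound for $F$ and $\omega$ via Gagliardo--Nirenberg interpolation between $L^2$ and $H^1$. Both quantities satisfy elliptic problems driven by the momentum equation $\rho\widetilde u=\nabla F+\mu\nabla^\perp\omega$. For $F$ we have $\Delta F=\mbox{div}(\rho\widetilde u)$ with the Neumann datum coming from the slip condition. For $\omega$ we have $\mu\Delta\omega=\nabla^\perp\cdot(\rho\widetilde u)$ with the Dirichlet datum $\omega=-Au\cdot n^\perp$. From these I will show
\[
\|\nabla F\|_{L^2}+\|\nabla\omega\|_{L^2}\le C\|\rho\widetilde u\|_{L^2}+C\|\nabla u\|_{L^2}\le CR_T^{1/2}A_2+CA_1.
\]
The boundary terms should be handled by Proposition \ref{lem-boundary-key} and the trace theorem, applied to the lower-order quantity $Au\cdot n^\perp$. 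Concretely, I subtract a fixed extension of $-Au\cdot n^\perp$ from $\omega$ and use
\[
\|u\|_{H^1}\le C\|\nabla u\|_{L^2}\le C(\|\mbox{div}u\|_{L^2}+\|\omega\|_{L^2})\le CA_1,
\]
where $\|\mbox{div}u\|_{L^2}\le C\|F\|_{L^2}+C$ and $\|F\|_{L^2}\le CR_T^{\beta/2}A_1$.

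The main obstacle is that this $L^2$ bound on $F$ loses a power $R_T^{\beta/2}$. To avoid it, I will follow \cite{FLL2022} and use the $L^2$ norm of $F$ weighted by $(2\mu+\lambda)^{-1}$. Since $F$ is only determined up to the elliptic equation, I split $F=F_1+\bar F$ with $\bar F$ the spatial average. I then estimate $\|F-\bar F\|_{L^2}\le C\|\nabla F\|_{L^2}$ only in the interpolation, while for the low-order part I use the interpolation
\[
\|F\|_{L^{p}}\le C\|F\|_{L^{2}}^{\theta}\|F\|_{L^{\infty-}}^{1-\theta}.
\]
Here the interpolation exponent is chosen slightly off, so that the weighted norm $\|F(2\mu+\lambda)^{-1/2}\|_{L^2}\le A_1$ is combined with Hölder in the form
\[
\|F\|_{L^{2-\delta}}\le\|F(2\mu+\lambda)^{-1/2}\|_{L^2}\,\|(2\mu+\lambda)^{1/2}\|_{L^{q}}
\]
for $q$ large. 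The factor $\|(2\mu+\lambda)^{1/2}\|_{L^q}$ is bounded by Lemma \ref{lem2}, costing only $C(\epsilon)$ or $R_T^{\epsilon}$ by interpolating $\|\rho\|_{L^\infty}$ against $\|\rho\|_{L^{q}}$. This is the source of the $R_T^\epsilon$ factors; the corresponding estimate for $\omega$ involves no weight.

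Finally I combine the two steps. Gagliardo--Nirenberg with the low norm $L^{2-\delta}$ gives
\[
\|F\|_{L^p}+\|\omega\|_{L^p}\le C\big(R_T^{\epsilon}A_1\big)^{\frac2p-O(\delta)}\big(R_T^{1/2}A_2+A_1\big)^{1-\frac2p+O(\delta)}.
\]
I absorb the $O(\delta)$ loss into $R_T^\epsilon$ by choosing $\delta$ small depending on $\epsilon$; this also uses the bound $A_2\le CR_T^{1/2}(\ldots)$ only through powers of $R_T$. Expanding the product and writing $A_1^{2/p}A_2^{1-2/p}=A_1(A_2^2/A_1^2)^{\frac12-\frac1p}$ yields exactly the two terms in \eqref{lem5-1}.
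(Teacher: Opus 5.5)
\textbf{There is a genuine gap in your last step.} In two dimensions, Gagliardo--Nirenberg with low endpoint $L^{2-\delta}$ reads $\|F\|_{L^p}\le C\|F\|_{L^{2-\delta}}^{\frac{2-\delta}{p}}\|F\|_{H^1}^{1-\frac{2-\delta}{p}}$. You have $\|F\|_{L^{2-\delta}}\le C(\delta)A_1$ and $\|F\|_{H^1}\le CR_T^{1/2}A_2+CA_1$, so your leading term is
\[
R_T^{\frac12-\frac1p+\frac{\delta}{2p}}A_1^{\frac{2-\delta}{p}}A_2^{1-\frac{2-\delta}{p}}
=R_T^{\frac12-\frac1p+\frac{\delta}{2p}}A_1\left(\frac{A_2^2}{A_1^2}\right)^{\frac12-\frac1p}\left(\frac{A_2}{A_1}\right)^{\frac{\delta}{p}} .
\]
The last factor is not a power of $R_T$, and nothing lets you absorb it into $R_T^{\epsilon}$. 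There is no pointwise-in-time bound of $A_2$ in terms of $A_1$ and $R_T$, since $A_2$ contains $\dot u$. The only control of $A_2^2/A_1^2$ is the time-integrated bound of Proposition \ref{lem6}, which is itself derived from the estimates of this lemma. The bound ``$A_2\le CR_T^{1/2}(\ldots)$'' you appeal to does not exist; what holds is $\|\rho\widetilde u\|_{L^2}\le R_T^{1/2}A_2$, which goes the other way. So when $A_2\gg A_1$ you only get a strictly weaker inequality than \eqref{lem5-1}: the ratio $A_2/A_1$ carries exponent $1-\frac2p+\frac{\delta}{p}$ instead of $1-\frac2p$. This is not cosmetic. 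In Lemma \ref{lem10} the time integration pairs $\int_0^TA_1^2$ with $\int_0^TA_2^2/A_1^2$ by H\"older, and the two exponents must sum to at most one.

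\textbf{How to fix it.} The $\epsilon$-loss has to be paid in $R_T$, not in $A_2$. The low endpoint must stay at $L^2$ scaling, so that $\|F\|_{H^1}$ enters with exponent exactly $1-\frac2p$. Your H\"older step with $\|(2\mu+\lambda)^{1/2}\|_{L^q}$ costs no power of $R_T$ (Lemma \ref{lem2} bounds it by $C(q)$), which is exactly why it costs homogeneity elsewhere. The standard route, as in \cite{HL2016,FLL2022}, is:
\begin{enumerate}
\item Put $g=F/(2\mu+\lambda)$, so that $\|g\|_{L^2}\le CA_1$ with no loss.
\item Interpolate $\|g\|_{L^p}\le\|g\|_{L^2}^{\theta}\|g\|_{L^r}^{1-\theta}$ with $r$ large.
\item Use $\|g\|_{L^r}\le C\|F\|_{L^r}\le C(r)\|F\|_{L^2}^{2/r}\|F\|_{H^1}^{1-2/r}$ together with the lossy bound $\|F\|_{L^2}\le CR_T^{\beta/2}A_1$.
\end{enumerate}
The exponents then combine exactly to
\[
\|g\|_{L^p}\le C(p,r)\,R_T^{\frac{\beta(1-2/p)}{r-2}}A_1^{\frac2p}\|F\|_{H^1}^{1-\frac2p}.
\]
This is the structure behind the paper's \eqref{5-6}, namely $\|F/(2\mu+\lambda)\|_{L^2}^{\frac2p-\epsilon'}\|F\|_{L^2}^{\epsilon'}\|F\|_{H^1}^{1-\frac2p}$. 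Taking $r$ large gives $R_T^{\epsilon}$ while keeping exact homogeneity in $A_2$. Handle $\omega$ with the $L^2$ endpoint directly.

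\textbf{A smaller slip.} You need $\|u\|_{H^1}\le CA_1$ so that the $A_1$-part of $\|\nabla F\|_{L^2}+\|\nabla\omega\|_{L^2}$ carries no power of $R_T$. This must come from $\|\mbox{div}u\|_{L^2}\le(2\mu)^{-1/2}\|F(2\mu+\lambda)^{-1/2}\|_{L^2}+\|P\|_{L^2}$. It cannot come from $\|F\|_{L^2}\le CR_T^{\beta/2}A_1$ as you wrote, since that would put $R_T^{\beta/2}$ in front of the second term of \eqref{lem5-1}. Likewise, bound the mean by $|\bar F|\le C\|F(2\mu+\lambda)^{-1/2}\|_{L^2}\|(2\mu+\lambda)^{1/2}\|_{L^2}\le CA_1$.
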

    
    \begin{proof}
    	
    	First, we rewrite the momentum equations as
    	\begin{equation}\label{5-1}
    		\rho \dot{u} =\nabla F + \mu \nabla^{\bot}\omega + \rho \nabla \Phi,
    	\end{equation}
    	which, together with the boundary conditions \eqref{boundary1} and \eqref{boundary2}, shows that $F$ solves the Neumann problem:
    	\begin{equation}\label{5-2}
    		\left\{
    		\begin{array}{lll}
    			\Delta F = \mbox{div}( \rho (\dot{u} - \nabla \Phi)), \quad \mbox{in} \,\Omega\\[2mm]
    			\frac{\pl F}{\pl n} = \rho \dot{u} \cdot n - \mu n^{\bot} \cdot \nabla ( A u \cdot n^{\bot}) - \rho \nabla \Phi \cdot n =- \rho (u \cdot \nabla) n \cdot u - \mu \pl_{\tau}(Au_{\tau}), \quad \mbox{on} \, \pl \Omega,
    		\end{array}
    		\right.
    	\end{equation}
    	and that $\omega$ solves the related Dirichlet problem:
    	\begin{equation}\label{omega}
    		\left\{
    		\begin{array}{lll}
    			\mu \Delta \omega = \nabla^{\bot}\cdot (\rho (\dot{u} - \nabla \Phi) ), \quad \mbox{in} \,\Omega\\[2mm]
    			\omega= -A u \cdot n^{\bot}, \quad \mbox{on} \, \pl \Omega,
    		\end{array}
    		\right.
    	\end{equation}
    	The standard $L^p$ estimates for elliptic equations (\cite{GT1983}, [\cite{NS2004}, Lemma 4.27]) then imply that, for $k \geq 0$ and $p \in (1,\infty)$,
    	\begin{equation}\label{5-3}
    		\|\nabla F\|_{W^{k,p}} + \|\nabla \omega\|_{W^{k,p}} \leq C(p,k,\Omega)( \|\rho (\dot{u}-\nabla \Phi)\|_{W^{k,p}} + \|\nabla u\|_{W^{k,p}} ).
    	\end{equation}
    	
    	In particular, we have
    	\begin{equation}\label{5-4}
    		\|\nabla F\|_{L^2} + \|\nabla \omega\|_{L^2} \leq C( \|\rho (\dot{u}-\nabla \Phi)\|_{L^2} + \|\nabla u\|_{L^2} ) \leq CR_T^{\frac{1}{2}}A_2 + CA_1,
    	\end{equation}
    	which, together with the Poincar\'{e} inequality (Lemma \ref{lem-poincare2}) and Lemma \ref{lem2}, yields
    	\begin{equation}\label{5-5}
    		\| F\|_{H^1} + \| \omega\|_{H^1} \leq C(\|\nabla F\|_{L^2} + \|(\nabla \omega,\omega)\|_{L^2} )+ \frac{C}{|\Omega|}\int_{\Omega}F \mbox{d}x \leq CR_T^{\frac{1}{2}}A_2 + CA_1.
    	\end{equation}
    	
    	Applying Lemma \ref{lem-div-curl} and the estimate for $\rho$ in Lemma \ref{lem2}, we obtain
    	\begin{align}\label{5-6}
    		\|\nabla u\|_{L^p} &\leq C(p)( \|\mbox{div}u\|_{L^p} + \|\omega\|_{L^p} )\nonumber\\[2mm]
    		&\leq C(p)( \|\frac{F}{2 \mu + \lambda(\rho)}\|_{L^p} + \|\omega\|_{L^p} )+ C(p)\nonumber\\[2mm]
    		& \leq C(p)( \|\frac{F}{2 \mu + \lambda(\rho)}\|_{L^2}^{\frac{2}{p}-\epsilon}\|F\|_{H^1}^{-\frac{2}{p}+1-\epsilon} + \|\omega\|_{L^2}^{\frac{2}{p}}\|\omega\|_{H^1}^{1-\frac{2}{p}}  )+C(p)\nonumber\\[2mm]
    		& \leq  C(p)( A_1^{\frac{2}{p}-\epsilon}\|F\|_{L^2}^{\epsilon}\|F\|_{H^1}^{1-\frac{2}{p}+1} + A_1^{\frac{2}{p}}\|\omega\|_{H^1}^{1-\frac{2}{p}}  )+C(p)\nonumber\\[2mm]
    		& \leq C(p,\epsilon)R_T^{\frac{\beta\epsilon}{2}}A_1^{\frac{2}{p}}( \|F\|_{H^1}+ \|\omega\|_{H^1})^{1-\frac{2}{p}} +C(p),
    	\end{align}
    	where, in the third inequality, we have used Lemma \ref{lem-poincare} to estimate $\|\frac{F}{2 \mu + \lambda(\rho)}\|_{L^p}$ and $\|\omega\|_{L^p}$.
    	
    	Combining \eqref{5-5} and \eqref{5-6}, we complete the proof.

    \end{proof}

    We can now establish an upper bound for $(1+\|\nabla u\|_{L^2})$ in terms of $R_T$. This estimate plays an important role in obtaining the upper bound for the density.
    
    \begin{prop}\label{lem6}
         For any $\alpha\in (0,1)$, there is a constant $C(\alpha)$ depending only on $\alpha$, $\Omega$, $T$, $\mu$, $\beta$, $\gamma$, $\|b(x)\|_{L^{\infty}}$, $\|\rho_0\|_{L^{\infty}}$, and $\|u_0\|_{H^1}$ such that
         \begin{equation}\label{lem6-1}
         	\sup_{0 \leq t \leq T} \log A^2_1(t) + \int_{0}^{T} \frac{A_2^2(t)}{A_1^2(t)} \mbox{d}t \leq C(\alpha)R_T^{1+\alpha}.
         \end{equation}
    \end{prop}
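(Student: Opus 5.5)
We follow the approach of \cite{FLL2022} (Proposition 3.5 there) and adapt it to the modified material derivative $\widetilde u=\dot u-\nabla\Phi$. The plan is to derive a differential inequality for $A_1^2$. Its dissipative part is $A_2^2$ and its forcing is a quantity integrable in time times $A_1^2$, up to powers $R_T^{1+\alpha}$. Dividing by $A_1^2$ and integrating in time then yields \eqref{lem6-1}.

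\textbf{Energy identity.} We multiply the momentum equation, written as $\rho\widetilde u=\nabla F+\mu\nabla^\perp\omega$, by $\widetilde u=\dot u-\nabla\Phi$ and integrate over $\Omega$; this produces $A_2^2$ on the left. On the right we integrate by parts:
\[
\int\nabla F\cdot\dot u=-\int F\,\mathrm{div}\,\dot u,
\]
which is valid because $\dot u\cdot n=-u\cdot((u\cdot\nabla)n)\cdot u$-type boundary terms appear. We expand $\mathrm{div}\,\dot u=\frac{D}{Dt}\mathrm{div}u+\partial_iu_j\partial_ju_i$ and use the continuity equation. This gives
\[
\frac{d}{dt}\int\frac{F^2}{2(2\mu+\lambda)}
\]
plus cubic terms bounded by $\int|F||\nabla u|^2$, $\int\rho^\beta|\mathrm{div}u|^3$-type terms and pressure terms. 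In the same way, $\mu\int\nabla^\perp\omega\cdot\dot u$ yields $\frac{d}{dt}\frac\mu2\int\omega^2$, the boundary term $\frac{d}{dt}\frac{\mu}{2}\int_{\partial\Omega}A|u|^2$ via the slip condition, and the remainder $\int|\omega||\nabla u|^2$.

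\textbf{Estimating the extra terms.} The new contributions are $-\int\nabla F\cdot\nabla\Phi-\mu\int\nabla^\perp\omega\cdot\nabla\Phi$. Since $\nabla^\perp\omega\cdot\nabla\Phi$ integrates to a boundary term and $\partial_n\Phi=0$, these reduce to $\int F(\rho-b)$ plus boundary terms $\int_{\partial\Omega}\omega\,\nabla^\perp\Phi\cdot n$. We control these by Proposition~\ref{lem-boundary-key} together with Lemma~\ref{lem2}: they are bounded by $C R_T^{\beta/2}A_1$. The boundary terms from $\dot u\cdot n$ (curvature terms such as $\int_{\partial\Omega}F\,u\cdot\nabla n\cdot u$) are likewise handled by Proposition~\ref{lem-boundary-key} applied with $v=u$, $f=F|u|$-type products, giving bounds of the form $\|u\|_{H^1}\|F\|_{H^1}\|\nabla u\|_{L^2}$. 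These are then absorbed using \eqref{5-5}.

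\textbf{Cubic terms (main obstacle).} The hardest step is controlling $\int(|F|+|\omega|)|\nabla u|^2$ and $\int\rho^\beta|\mathrm{div}u|^2|\nabla u|$, with only an $R_T^{1+\alpha}$ loss after division by $A_1^2$. We would first try the following. We write $|\mathrm{div}u|\le |F|/(2\mu+\lambda)+P/(2\mu+\lambda)$ and use Hölder:
\[
\int|F||\nabla u|^2\le\|F\|_{L^{p}}\|\nabla u\|_{L^{2p'}}^2 .
\]
The factor $\|\nabla u\|_{L^{2p'}}$ is estimated via Lemma~\ref{lem5} with $p'$ close to $1$. For $\|F\|_{L^p}$ we use Lemma~\ref{lem-poincare} with \eqref{5-5}, namely
\[
\|F\|_{L^p}\le C\,p^{1/2}\,(R_T^{\beta/2}A_1)^{2/p}\,(R_T^{1/2}A_2+A_1)^{1-2/p}.
\]
Taking $p$ large, depending on $\alpha$, the total power of $A_2$ stays below $2$. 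Young's inequality then gives
\[
\le\tfrac12A_2^2+C(\alpha)R_T^{1+\alpha}A_1^2\,(A_1^2+\|\nabla u\|_{L^2}^2)\ \text{ or, after refinement, } \le\tfrac12A_2^2+C(\alpha)R_T^{1+\alpha}A_1^2\,\|\nabla u\|_{L^2}^2 .
\]
The term $\int\rho^\beta|\mathrm{div}u|^2|\nabla u|$ is handled similarly, using the factor $\frac{F^2}{2\mu+\lambda}$ in $A_1$.

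\textbf{Conclusion.} The result is
\[
\frac{d}{dt}A_1^2+A_2^2\le C(\alpha)R_T^{1+\alpha}\big(1+\|\nabla u\|_{L^2}^2\big)A_1^2 .
\]
Dividing by $A_1^2\ge1$ gives $\frac{d}{dt}\log A_1^2+A_2^2/A_1^2\le C(\alpha)R_T^{1+\alpha}(1+\|\nabla u\|_{L^2}^2)$. Integrating over $[0,T]$ and using \eqref{4-2} together with $\log A_1^2(0)\le C$ (since $u_0\in H^1$, $\rho_0\in L^\infty$) yields \eqref{lem6-1}.
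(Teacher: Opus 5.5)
Your overall scheme is the paper's. You test $\rho\widetilde u=\nabla F+\mu\nabla^{\bot}\omega$ against $\widetilde u$ and control the cubic terms by H\"older, \eqref{poincare} and Lemma~\ref{lem5}, so that the only loss beyond $R_T^{1/2}$ is a small power $R_T^{\beta/p}$. Your split $\|F\|_{L^p}\|\nabla u\|_{L^{2p'}}^2$ gives the leading term $R_T^{\frac12+\frac{\beta}{p}+2\epsilon}A_1^2A_2$, which is the paper's $R_T^{\alpha\beta/2}\|F\|_{H^1}A_1^2$ after \eqref{5-5}. You then absorb $A_2^2$ by Young, divide by $A_1^2$, and integrate using \eqref{4-2}.

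The step that does not close as written is the Poisson contribution. You bound $\int_\Omega F(\rho-b)\,dx$ and the associated boundary term by $CR_T^{\beta/2}A_1$, i.e.\ you go through $\|F\|_{L^2}\le CR_T^{\beta/2}A_1$. This term contains no factor of $A_2$ for Young to trade away. After dividing by $A_1^2$ and integrating in time, it contributes $CTR_T^{\beta/2}$ to the left-hand side of \eqref{lem6-1}. That is not bounded by $C(\alpha)R_T^{1+\alpha}$ once $\beta>2+2\alpha$. Since only $\beta>\frac43$ is assumed, for $\beta\ge4$ the claimed estimate fails for every $\alpha\in(0,1)$. Your concluding differential inequality silently drops this term.

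The repair is to keep the weight on $F$ where $A_1$ absorbs it. For example, by Lemma~\ref{lem2},
\[
\Big|\int_\Omega F(\rho-b)\,dx\Big|\le\Big\|\frac{F}{\sqrt{2\mu+\lambda(\rho)}}\Big\|_{L^2}\big\|\sqrt{2\mu+\lambda(\rho)}\,(\rho+b)\big\|_{L^2}\le CA_1 .
\]
Alternatively, as the paper does, bound it by $C\|F\|_{L^3}\|\rho-b\|_{L^{3/2}}\le C\|F\|_{H^1}\le CR_T^{1/2}A_2+CA_1$ via \eqref{5-5}, which Young turns into $\frac14A_2^2+CR_T+CA_1^2$. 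The boundary piece $\mu\int_{\partial\Omega}A(u\cdot n^{\bot})(\nabla\Phi\cdot n^{\bot})\,ds$ is at most $C\|u\|_{H^1}\|\nabla\Phi\|_{H^1}\le CA_1$ by the trace theorem, with no power of $R_T$ at all.

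Three smaller points:
\begin{enumerate}
\item The boundary integral of $\omega\,(\dot u\cdot n^{\bot})$ also produces a cubic term of the form $\int_{\partial\Omega}A(u\cdot n^{\bot})\,\big((u\cdot\nabla)u\cdot n^{\bot}\big)\,ds$, which your list omits. It is harmless: it is at most $CA_1^3$ after writing $u=(u\cdot n^{\bot})n^{\bot}$ and integrating by parts along $\partial\Omega$, as in the paper's $I_9$.
\item The curvature term $\int_{\partial\Omega}F\,u\cdot\nabla n\cdot u\,ds$ is a plain trace estimate, at most $C\|F\|_{H^1}\|u\|_{H^1}^2$. It is not an application of Proposition~\ref{lem-boundary-key}.
\item The ``refined'' right-hand side $R_T^{1+\alpha}A_1^2\|\nabla u\|_{L^2}^2$ is not justified, since in general one only has $A_1^2\le CR_T^{\beta}(1+\|\nabla u\|_{L^2}^2)$. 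It is also unnecessary: $C(\alpha)R_T^{1+\alpha}A_1^4$ already suffices, because $\int_0^TA_1^2\,dt\le C$ by \eqref{4-2}.
\end{enumerate}
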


    \begin{proof}
    	We modify the ideas of \cite{FLL2022,HL2016} to overcome the difficulties arising from the boundary. First, direct calculations show that
    	\begin{equation}\label{6-1}
    		\nabla^{\bot}\cdot \tilde{u} = \nabla^{\bot}\cdot \dot{u} - \nabla^{\bot }\cdot \nabla \Phi= \frac{D}{Dt}\omega - (\pl_1 u \cdot \nabla)u_2 + (\pl_2 u \cdot \nabla)u_1= \frac{D}{Dt}\omega + \omega \mbox{div}u,
    	\end{equation}
    	and that
    	\begin{align}\label{6-2}
    		&\mbox{div}\tilde{u}= \mbox{div}\dot{u}- \Delta\Phi= \frac{D}{Dt}\mbox{div}u + (\pl_1 u \cdot \nabla )u_1 + (\pl_2 u \cdot \nabla)u_2- \Delta \Phi\nonumber\\[2mm]
    		&= \frac{D}{Dt}( \frac{F}{2\mu + \lambda(\rho)} )+ \frac{D}{Dt}( \frac{P}{2\mu + \lambda(\rho)} )- 2 \nabla u_1 \cdot \nabla^{\bot }u_2 + |\mbox{div} u|^2- \Delta\Phi.
    	\end{align}
    	
    	Multiplying
    	\begin{equation}\label{6-3}
    		\rho \tilde{u}= \nabla F + \mu \nabla^{\bot}\omega
    	\end{equation}
    	by $2\tilde{u}$, integrating over $\Omega$, and using \eqref{6-1}, \eqref{6-2}, and the boundary condition \eqref{boundary1}, we obtain
    	\begin{align}\label{6-4}
    		&2 A^2_2= \int_{\Omega} 2 \rho |\tilde{u}|^2 \mbox{d}x= \int_{\Omega} 2\nabla F\cdot \tilde{u}+ 2\mu \nabla^{\bot} \omega \cdot \tilde{u} \mbox{d}x\nonumber\\[2mm]
    		&= \int_{\Omega} (-2F\mbox{div}\tilde{u} -2\mu \nabla^{\bot}\cdot \tilde{u} \omega )\mbox{d}x + \int_{\partial\Omega} (2 F \tilde{u} \cdot n + 2 \mu \omega ( \tilde{u} \cdot n^{\bot} )) \mbox{d} s\nonumber\\[2mm]
    		&= -\frac{d}{dt} \int_{\Omega} (\frac{|F|^2}{2\mu + \lambda{\rho}} + \mu |\omega|^2)\mbox{d}x - \int_{\Omega} (2F(u \cdot \nabla)(\frac{F}{2\mu +\lambda(\rho)}) +2F \frac{D}{Dt}(\frac{P}{2\mu + \lambda(\rho)})  \nonumber\\[2mm]
    		& -4F\nabla u_1 \cdot\nabla^{\bot}u_2 + 2F|\mbox{div}u|^2 - 2F\Delta\Phi +2\mu (u\cdot \nabla) \omega \omega + 2\mu |\omega|^2\mbox{div}u )\mbox{d}x + \int_{\partial\Omega} (2 F \tilde{u} \cdot n + 2 \mu \omega ( \tilde{u} \cdot n^{\bot} )) \mbox{d} s \nonumber\\[2mm]
    		&= -\frac{d}{dt} \int_{\Omega} (\frac{|F|^2}{2\mu + \lambda{\rho}} + \mu |\omega|^2)\mbox{d}x - \mu\int_{\Omega}|\omega|^2\mbox{div}u \mbox{d}x + 4\int_{\Omega} F \nabla u_1 \cdot \nabla^{\bot}u_2 \mbox{d}x- \int_{\Omega}2F|\mbox{div}u|^2\mbox{d}x\nonumber\\[2mm]
    		&  - \int_{\Omega} \frac{(\beta-1)\lambda(\rho)-2\mu}{(2\mu +\lambda)^2}F^2\mbox{div}u\mbox{d}x - 2\beta \int_{\Omega} \frac{\lambda(\rho)P}{(2\mu + \lambda(\rho))^2}F\mbox{div}u \mbox{d}x + 2\gamma\int_{\Omega} \frac{P}{2 \mu +\lambda(\rho)} F \mbox{div}u \mbox{d}x\nonumber\\[2mm]
    		&+ 2\int_{\Omega}F \Delta\Phi\mbox{d}x + \int_{\partial\Omega} (-2 F u \cdot \nabla n \cdot u + 2 \mu \omega ( \tilde{u} \cdot n^{\bot} )) \mbox{d} s =: - \frac{d}{dt}A_1^2+ \frac{d}{dt} \int_{\partial\Omega} A |u|^2 \mbox{d}s + \sum_{i=1}^{9}I_i. 
    	\end{align}
    	
    	Using Lemma \ref{lem-poincare}, \eqref{4-1}, and H$\ddot{o}$lder's inequality, we estimate $I_1$ as follows:
    	\begin{equation}\label{6-5}
    		I_1 \leq C\|\omega\|^2_{L^4}\|\mbox{div}u\|_{L^2}\leq C\|\omega\|_{L^2}\|\nabla \omega\|_{L^2}A_1\leq CA_1^2\|\nabla\omega\|_{L^2}.
    	\end{equation}

    	Similarly, choosing $p \geq \frac{8\beta}{\alpha}$, we obtain
    	\begin{align}\label{6-6}
    		|I_2|+|I_3|&\leq C\|F\|_{L^p}\|\nabla u\|_{L^2}^{\frac{2(p-3)}{p-2}}\|\nabla u\|_{L^p}^{\frac{2}{p-2}}\nonumber\\[2mm]
    		&\leq C\|F\|_{L^2}^{\frac{2}{p}}\|F\|_{H^2}^{1-\frac{2}{p}}A_1^{\frac{2(p-3)}{p-2}}\left( R_T^{\frac{\beta\epsilon}{2}}A_1^{\frac{2}{p}}( \|F\|_{H^1}+ \|\omega\|_{H^1})^{1-\frac{2}{p}} +1  \right)^{\frac{2}{p-2}}\nonumber\\[2mm]
    		& \leq C(\alpha) R_T^{\frac{\alpha\beta}{2}}( \|F\|_{H^1}+ \|\omega\|_{H^1} ) A_1^2 + C(\alpha)A_1^2,
    	\end{align}
    	where we have used \eqref{5-6} to estimate $\|\nabla u\|_{L^p}$ in the first line and the definitions of $R_T$ and $A_1$ to obtain $\|F\|_{L^2}\leq CR_T^{\frac{\beta}{2}}A_1$.
    	
    	The next four terms are estimated by the same arguments, using the bound $\|\rho\|_{L^p}\leq C$ from Lemma \ref{lem2} and the boundedness of $b(x)$:
    	\begin{align}\label{6-7}
    		\sum_{i=4}^{7} &\leq C \int_{\Omega}\frac{F^2|\mbox{div}u|}{2\mu +\lambda{\rho}}\mbox{d}x + C \int_{\Omega}\frac{P}{2\mu + \lambda(\rho)}|F||\mbox{div}u|\mbox{d}x + C\|F\|_{L^{3}}\|\Delta\Phi\|_{L^{\frac{3}{2}}} \nonumber\\[2mm]
    		&\leq CA_1\|\frac{F^2}{2\mu +\lambda{\rho}}\|_{L^2} + CA_1\|F\|_{L^{2+\frac{4\gamma}{\beta}}} + C\|F\|_{L^{3}}( 1+\|\rho\|_{L^{\frac{3}{2}}} )\nonumber\\[2mm]
    		& \leq CA_1\|\frac{F}{(2\mu +\lambda{\rho})^{\frac{1}{2}}}\|_{L^2}^{1-\alpha}\|F\|^{1+\alpha}_{L^{\frac{2(1+\alpha)}{\alpha}}} + C(\alpha)A_1\|F\|_{H^1} \nonumber\\[2mm]
    		& \leq C(\alpha)A_1^{2-\alpha}\|F\|^{\alpha}_{L^2}\|F\|_{H^1} + C(\alpha)A_1\|F\|_{H^1}\nonumber\\[2mm]
    		& \leq C(\alpha)R_T^{\frac{\alpha\beta}{2}}A_1^2\|F\|_{H^1},
    	\end{align}
    	where $0<\alpha<1$.
    	
    	Applying the trace theorem (Lemma \ref{theorem-trace}), Lemma \ref{lem-poincare2}, and Lemma \ref{lem-div-curl}, we obtain
    	\begin{equation}\label{6-8}
    		|I_8| \leq C \|F\|_{H^1}\|u\|^2_{H^1}\leq C\|F\|_{H^1}\|\nabla u\|_{L^2}^2\leq C\|F\|_{H^1}A_1^2.
    	\end{equation}
    	
    	For $I_9$, we have
    	\begin{align}\label{6-8-1}
    		I_9 &= 2 \mu \int_{\partial\Omega} \omega (\tilde{u} \cdot n^{\bot}) \mbox{d}s = - 2 \mu \int_{\pl \Omega} (A (u \cdot n^{\bot}) \frac{d}{dt} (u \cdot n^{\bot}) + A (u \cdot n^{\bot}) (u \cdot \nabla)u \cdot n^{\bot} + A (u \cdot n^{\bot}) \nabla \Phi \cdot n^{\bot} ) \mbox{d} s \nonumber\\[2mm]
    		& = - \mu \frac{d}{dt} \int_{\partial\Omega} A (u \cdot n^{\bot})^2 \mbox{d} s - 2 \mu \int_{\partial\Omega} (A (u \cdot n^{\bot})^2 (n^{\bot} \cdot \nabla) u \cdot n^{\bot} + A (u \cdot n^{\bot}) \nabla \Phi \cdot n^{\bot} ) \mbox{d} s \nonumber\\[2mm]
    		& = - \mu \frac{d}{dt} \int_{\partial\Omega} A |u|^2 \mbox{d} s - 2 \mu \int_{\partial\Omega} (\frac{1}{3}A (n^{\bot} \cdot \nabla)(u \cdot n^{\bot})^3 + A (u \cdot n^{\bot})^2 (n^{\bot} \cdot \nabla) n^{\bot} \cdot u + A (u \cdot n^{\bot}) \nabla \Phi \cdot n^{\bot} ) \mbox{d} s \nonumber\\[2mm]
    		& \leq - \mu \frac{d}{dt} \int_{\partial\Omega} A |u|^2 \mbox{d} s - 2 \mu \int_{\Omega} \frac{1}{3} \mbox{div}(A \nabla^{\bot} (u \cdot n^{\bot})^3) \mbox{d} x- C(\|\nabla u\|^3_{L^{2}}+\|\rho\|^2_{L^2}+1) \nonumber\\[2mm]
    		& \leq - \mu \frac{d}{dt} \int_{\partial\Omega} A |u|^2 \mbox{d} s + C \int_{\Omega} |\nabla u||u|^2 \mbox{d} x- C(\|\nabla u\|^3_{L^{2}}+\|\rho\|^2_{L^2}+1) \nonumber\\[2mm]
    		& \leq - \mu \frac{d}{dt} \int_{\partial\Omega} A |u|^2 \mbox{d} s + C A^3_1.
    	\end{align}

    	Substituting the estimates \eqref{6-5}-\eqref{6-8-1} into \eqref{6-4}, we obtain, for any $\alpha \in (0,1)$,
    	\begin{align}\label{6-9}
    		\frac{d}{dt}A^2_1(t)+ A_2^2(t) &\leq C(\alpha)R_T^{\frac{\alpha\beta}{2}}( \|F\|_{H^1}+\|\omega\|_{H^1} )A^2_1+C(\alpha)A_1^2\nonumber\\[2mm]
    		&\leq C(\alpha)R_T^{\frac{1+\alpha\beta}{2}}A_2A_1^2+C(\alpha)A_1^3\nonumber\\[2mm]
    		&\leq C(\alpha)R_T^{1+\alpha\beta}A_1^4 + \frac{1}{2} A_2^2,
    	\end{align}
    	where, in the second inequality, we have used \eqref{5-5} to estimate $\|F\|_{H^1}+\|\omega\|_{H^1}$. Combining this with \eqref{4-2}, which gives $\int_{0}^{T}A_1^2\mbox{d}t\leq C$, proves \eqref{lem6-1} and completes the proof of Lemma \ref{lem6}.
    	
    \end{proof}
    
    With these preliminary estimates in hand, we turn to the central task of estimating the effective viscous flux $F$ defined in \eqref{Fdefinition}. This can be done by solving the Neumann problem \eqref{5-2} using the classical Green's function. However, except for a few special domains, it is difficult to express Green's function explicitly for a general domain $\Omega$, which poses a major obstacle to further calculations. To address this difficulty, we begin with the simplest case, where $\Omega$ is the unit disc $\mathbb{D}$. The explicit Neumann Green function for the unit disc (see \cite{STT2016}) provides a model for our proof. Moreover, by the Riemann mapping theorem [\cite{SS2003}, Chapter 9], every simply connected domain is conformally equivalent to the unit disc. We can therefore pull back Green's function for the unit disc through a conformal mapping and, in this sense, reduce the general case to the unit disc. More precisely, let $\psi=(\psi_1,\psi_2):\bar{\Omega}\to \bar{\mathbb{D}}$ be the conformal mapping satisfying \eqref{psi_1} and Lemma \ref{lem-cauchy-riemann}. We define the pull-back Green's function $\tilde{N}$ on $\Omega$ by
    \begin{align}\label{tildeN}
    	\tilde{N}(x,y):=N(\psi(x),\psi(y))=-\frac{1}{2\pi}\left[ \log|\psi(x)-\psi(y)|+\log\left| |\psi(x)| \psi(y)- \frac{\psi(x)}{|\psi(x)|}\right| \right].
    \end{align}
    As shown in Lemma \ref{lem7} below, the outward normal derivative of $\tilde{N}$ on $\pl \Omega$ is no longer constant. Thus, $\tilde{N}$ is not the Green's function of $\Omega$ in the classical sense, but it is sufficient for the subsequent calculations.
    
    \begin{lem}\label{lem7}
    	Let $n$ be the outward unit normal at $y_0\in \pl \Omega$, and let $\tilde{N}=\tilde{N}(x,y)$ be defined by \eqref{tildeN}. Then
    	\begin{equation}\label{lem7-1}
    	     \frac{\pl \tilde{N}}{\pl n}(x,y_0) = - \frac{1}{2\pi}| \nabla \psi_1(y_0)|.
    	\end{equation}
    \end{lem}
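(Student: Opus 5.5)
The plan is a direct computation: apply the chain rule to $\tilde N(x,y)=N(\psi(x),\psi(y))$ in the variable $y$, use Lemma \ref{lem-conformal} to turn the normal derivative on $\pl\Omega$ into a normal derivative on $\pl\mathbb{D}$, and then show that the explicit disc Green function has constant normal derivative $-\frac{1}{2\pi}$ on the unit circle. Throughout, fix $x\in\bar\Omega$ with $x\neq y_0$, write $\xi=\psi(x)\in\bar{\mathbb D}$ and $z=\psi(y)$, and set $z_0=\psi(y_0)$, so that $|z_0|=1$ and $\xi\neq z_0$ by injectivity.

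\textbf{Step 1 (chain rule and conformality).} By the chain rule,
\[
\frac{\pl\tilde N}{\pl n}(x,y_0)=\big(n\cdot\nabla\psi(y_0)\big)\cdot\nabla_zN(\xi,z)\big|_{z=z_0}.
\]
Lemma \ref{lem-conformal} gives $n\cdot\nabla\psi(y_0)=|\nabla\psi_1(y_0)|\,n_{\mathbb D}(z_0)$. Since the outward normal to $\pl\mathbb D$ at $z_0$ is $z_0$ itself, this yields
\[
\frac{\pl\tilde N}{\pl n}(x,y_0)=|\nabla\psi_1(y_0)|\; z_0\cdot\nabla_zN(\xi,z_0).
\]
It therefore suffices to prove $z_0\cdot\nabla_zN(\xi,z_0)=-\frac{1}{2\pi}$ for every $|z_0|=1$ and $\xi\neq z_0$.

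\textbf{Step 2 (the disc computation).} For $\xi\neq0$, differentiate the two logarithms:
\[
\nabla_z\log|\xi-z|=\frac{z-\xi}{|z-\xi|^2},\qquad
\nabla_z\log\Big||\xi|z-\tfrac{\xi}{|\xi|}\Big|=\frac{|\xi|^2z-\xi}{\big||\xi|z-\xi/|\xi|\big|^2}.
\]
On $|z|=1$ we have the identity
\[
\big||\xi|z-\xi/|\xi|\big|^2=|\xi|^2-2\xi\cdot z+1=|z-\xi|^2 .
\]
Dotting both gradients with $z$ and adding gives
\[
\frac{(1-\xi\cdot z)+(|\xi|^2-\xi\cdot z)}{|z-\xi|^2}=1 ,
\]
hence $z\cdot\nabla_zN(\xi,z)=-\frac{1}{2\pi}$. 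Substituting this into Step 1 proves \eqref{lem7-1}.

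\textbf{Step 3 (the case $\xi=0$).} When $\xi=0$, i.e.\ $x=\psi^{-1}(0)$, the second logarithm should be read as its continuous extension $\log|z|\cdot 0+\log 1=0$, since $\big||\xi|z-\xi/|\xi|\big|=1$ for $\xi=0$. Then $N(0,z)=-\frac{1}{2\pi}\log|z|$, whose radial derivative on $|z|=1$ is again $-\frac{1}{2\pi}$. So the formula holds there too, and it is also consistent with taking the limit $\xi\to0$ in Step 2.

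The only real points of care are the orientation and scaling in Lemma \ref{lem-conformal}, namely that the factor is $|\nabla\psi_1(y_0)|=|\psi'(y_0)|$ and that it is positive along the outward normal, together with the unit-circle identity in Step 2. Everything else is routine differentiation.
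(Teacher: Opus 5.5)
Your proposal is correct and follows essentially the same route as the argument the paper defers to (\cite{FLL2022}, Lemma 3.6). There one applies the chain rule in $y$, then uses Lemma \ref{lem-conformal} to turn the outward normal derivative on $\partial\Omega$ into $|\nabla\psi_1(y_0)|$ times the radial derivative on $\partial\mathbb{D}$, and finally uses the unit-circle identity $\big||\xi|z-\xi/|\xi|\big|=|z-\xi|$ to show that the disc Neumann function has radial derivative $-\frac{1}{2\pi}$. Your separate treatment of the case $\psi(x)=0$ is a small addition that the paper's formula \eqref{tildeN} leaves implicit.
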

    
    \begin{proof}
    	
    	We omit the proof and refer the reader to \cite{FLL2022}, Lemma 3.6, for details.
    	
    \end{proof}
    
    Now we use the pull-back Green's function $\tilde{N}$ defined by \eqref{tildeN} to obtain a pointwise representation of $F$ in $\Omega$ via Green's identity as follows:

    \begin{lem}\label{lem8}
    	Let $F \in C^1(\bar{\Omega}) \cap C^2(\Omega)$ solve the problem \eqref{5-2}. Then, for $x \in \Omega$,
    	\begin{align}\label{lem8-1}
    		F(x) &= \int_{\Omega} \nabla_y \tilde{N}(x,y) \cdot \rho \tilde{u}(y) \mbox{d}y - \int_{\partial\Omega} \frac{\pl \tilde{N}}{\pl n}(x,y)F(y) \mbox{d}S_y \nonumber\\[2mm]
    		& - \mu \int_{\partial\Omega} \tilde{N}(x,y) n^{\bot}\cdot \nabla (Au\cdot n^{\bot}) \mbox{d}S_y.
    	\end{align}
    \end{lem}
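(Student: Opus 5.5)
Fix $x\in\Omega$ and apply Green's second identity to $F$ and $y\mapsto\tilde N(x,y)$ on $\Omega_\varepsilon:=\Omega\setminus B_\varepsilon(x)$, then let $\varepsilon\to0$.

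\emph{Step 1: properties of $\tilde N$.} Since $N(\cdot,\cdot)$ is the Neumann Green function of the unit disc, $\Delta_z N(\psi(x),z)=\delta_{\psi(x)}(z)-\frac1\pi$ on $\mathbb D$. The second logarithm in \eqref{tildeN} is smooth and harmonic in $z$ for $z\in\mathbb D$, because $\big||\psi(x)|z-\psi(x)/|\psi(x)|\big|\ge 1-|\psi(x)|>0$. By the last item of Lemma \ref{lem-cauchy-riemann}, $y\mapsto\tilde N(x,y)$ is therefore harmonic in $\Omega\setminus\{x\}$. The constant source $-\frac1\pi$ in the disc variable does not survive the pull-back: the log-singularity is the only part retained.

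Near $y=x$, the bi-Lipschitz bound $c_1|x-y|\le|\psi(x)-\psi(y)|\le c_2|x-y|$ and the smoothness of $\psi$ give
\[
\tilde N(x,y)=-\tfrac1{2\pi}\log|x-y|+O(1),\qquad \nabla_y\tilde N(x,y)=-\tfrac{1}{2\pi}\tfrac{y-x}{|x-y|^2}+O(1).
\]
The bounded remainder comes from $\log\frac{|\psi(x)-\psi(y)|}{|x-y|}$, which is $C^1$ near the diagonal by conformality and a Taylor expansion of $\psi$.

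\emph{Step 2: Green's identity and the limit $\varepsilon\to0$.} Green's identity on $\Omega_\varepsilon$ gives
\[
\int_{\Omega_\varepsilon}\big(\tilde N\Delta F - F\Delta_y\tilde N\big)\,dy=\int_{\partial\Omega}\big(\tilde N\partial_nF - F\partial_n\tilde N\big)\,dS_y-\int_{\partial B_\varepsilon(x)}\big(\tilde N\partial_rF - F\partial_r\tilde N\big)\,dS_y .
\]
Since $\Delta_y\tilde N=0$ on $\Omega_\varepsilon$, the term $F\Delta_y\tilde N$ drops out. For the small circle, $\tilde N\partial_rF=O(\varepsilon|\log\varepsilon|)\to0$ because $F\in C^1$. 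By the expansion of $\nabla_y\tilde N$, the term $F\partial_r\tilde N$ converges to $-F(x)$. Thus $F(x)$ appears with the correct sign and coefficient one.

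\emph{Step 3: the volume term.} Write $\Delta F=\Div(\rho\tilde u)$, which is \eqref{5-2}. Integrating by parts once more, this time with $\tilde N$ as the weight, gives
\[
\int_\Omega\tilde N\,\Div(\rho\tilde u)\,dy=-\int_\Omega\nabla_y\tilde N\cdot\rho\tilde u\,dy+\int_{\partial\Omega}\tilde N\,\rho\tilde u\cdot n\,dS_y .
\]
The $\partial B_\varepsilon$ contribution here is $O(\varepsilon|\log\varepsilon|)$, and $\nabla_y\tilde N\in L^1$. This step needs $\rho\tilde u$ regular enough for the integration by parts; for the strong solution of Lemma \ref{lem-localexistence} this holds, and otherwise one approximates.

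\emph{Step 4: the boundary terms.} Use the boundary condition in the first line of \eqref{5-2}:
\[
\partial_nF=\rho\dot u\cdot n-\mu n^\perp\cdot\nabla(Au\cdot n^\perp)-\rho\nabla\Phi\cdot n=\rho\tilde u\cdot n-\mu n^\perp\cdot\nabla(Au\cdot n^\perp).
\]
The term $\int_{\partial\Omega}\tilde N\rho\tilde u\cdot n$ coming from $\partial_nF$ cancels exactly the one produced in Step 3. What remains is $-\int_{\partial\Omega}\partial_n\tilde N\,F\,dS_y-\mu\int_{\partial\Omega}\tilde N\,n^\perp\cdot\nabla(Au\cdot n^\perp)\,dS_y$. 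Together with the volume term this is \eqref{lem8-1}.

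I do not use Lemma \ref{lem7} to simplify $\int_{\partial\Omega}\partial_n\tilde N\,F$, since the statement keeps it in that form.

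\emph{Main obstacle.} The delicate point is the sign and normalisation bookkeeping at the singularity. One must confirm that the harmonic correction in the pull-back does not create an extra $\delta$-mass or an extra volume term, given that the disc Green function carries the $-\frac1\pi$ source which is not preserved by $\psi$. This relies on the fact that $|\psi(x)-\psi(y)|/|x-y|$ is smooth and nonvanishing. I would verify it through the exact local expansion of $\psi$ around $x$.
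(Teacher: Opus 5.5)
Your argument is correct and is essentially the paper's proof. Both use Green's second identity for $F$ and $\tilde N(x,\cdot)$ with the singularity excised, take the small-circle limit to produce $F(x)$, and integrate $\int_\Omega \tilde N\,\mbox{div}(\rho\tilde u)\,dy$ by parts once, so that its boundary term cancels the $\rho\tilde u\cdot n$ part of $\partial_nF$. The paper differs only cosmetically: it splits $\tilde N=N_1+N_2$ and excises the conformal preimage $\psi^{-1}(B_r(\psi(x)))$, whereas you excise a Euclidean ball and use the local expansion of $\nabla_y\tilde N$.

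One correction to your Step 1 commentary. The disc function in \eqref{tildeN} satisfies $\Delta_zN(\psi(x),z)=-\delta_{\psi(x)}$, with the opposite sign from what you wrote and no interior source at all; the compensating flux is the constant Neumann datum $-\frac{1}{2\pi}$, cf.\ Lemma \ref{lem7}. Moreover, a constant source would survive the pull-back as $-\frac{1}{\pi}|\psi'(y)|^2$ and would spoil \eqref{lem8-1}. What you actually use, harmonicity of both logarithms for $y\neq x$, is right and suffices.
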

    
    \begin{proof}
    	
    	Write $\tilde{N}(x,y)=N_1(x,y)+N_2(x,y)$, where
    	\begin{equation}\label{8-1}
    		N_1(x,y)= -\frac{1}{2\pi} \log|\psi(x)-\psi(y)|,
    	\end{equation}
    	and
    	\begin{equation}\label{8-2}
    		N_2(x,y)= -\frac{1}{2\pi} \log\left| |\psi(x)| \psi(y)- \frac{\psi(x)}{|\psi(x)|}\right|.
    	\end{equation}
    	
    	By Lemma \ref{lem-cauchy-riemann}, conformal mappings preserve harmonicity, so $N_2(x,\cdot)$ is harmonic in $\Omega$. Since $N_2(x,\cdot)$ has no singularities in $\Omega$, we may apply Green's second identity to $F$ and $N_2(x,\cdot)$ in $\Omega$ to obtain
    	\begin{equation}\label{8-3}
    		- \int_{\Omega} N_2 \Delta F \mbox{d}y = \int_{\partial\Omega} ( \frac{\pl N_2}{\pl n}F -N_2 \frac{\pl F}{\pl n} ) \mbox{d}S_y,
    	\end{equation}
    	where we have integrated by parts twice.
    	
    	Applying Green's second identity to $F$ and $N_1(x,\cdot)$ in $\Omega \backslash \tilde{B_r}(x)$, where $\tilde{B_r}(x)=\psi^{-1}(B_r(\psi(x)))$ is the inverse image under $\psi$ of the ball centered at $\psi(x)$ with radius $r$, we obtain, for sufficiently small $r$,
    	\begin{align}\label{8-4}
    		&\int_{\Omega \backslash \tilde{B_r}(x)} ( \Delta N_1 F -N_1 \Delta F ) \mbox{d}y\nonumber\\[2mm]
    		& = \int_{\partial\Omega}( \frac{\pl N_1}{\pl n}F -N_1 \frac{\pl F}{\pl n} ) \mbox{d}S_y - \int_{\pl \tilde{B_r}(x)} (\frac{\pl N_1}{\pl n}F -N_1 \frac{\pl F}{\pl n} ) \mbox{d}S_y.
    	\end{align}
    	
    	Letting $r \to 0$ and using the fact that $\Delta N_1=0$ in $\Omega \backslash \tilde{B_r}(x)$, we have
    	\begin{align}\label{8-5}
    		&-\int_{\Omega} N_1 \Delta F \mbox{d}y = \int_{\partial\Omega}( \frac{\pl N_1}{\pl n}F -N_1 \frac{\pl F}{\pl n} ) \mbox{d}S_y -\lim_{r\to 0} \int_{\pl \tilde{B_r}(x)} (\frac{\pl N_1}{\pl n}F -N_1 \frac{\pl F}{\pl n} ) \mbox{d}S_y.
    	\end{align}
    	
    	For sufficiently small $r$,
    	\begin{align}\label{8-6}
    		& | \int_{\pl \tilde{B_r}(x)} N_1 \frac{\pl F}{\pl n} \mbox{d} S_y| \leq \int_{\pl \tilde{B_r}(x)}|N_1(x,y)|\mbox{d}S_y \cdot \sup_{\pl \tilde{B_r}(x)} |\nabla F| \nonumber\\[2mm]
    		& \leq \int_{\pl B_r( \psi(x))} | \log|\psi(x)-\psi(y)|| \frac{1}{\nabla \psi_1(y)} \mbox{d}S_{\psi(y)} \cdot \sup_{\pl \tilde{B_r}(x)} |\nabla F| \nonumber\\[2mm]
    		& \leq Cr |\log r|\cdot \sup_{\pl \tilde{B_r}(x)}|\nabla \psi_1|^{-1} \sup_{\pl \tilde{B_r}(x)}|\nabla F| \to 0 \quad \mbox{as} \quad r \to 0,
    	\end{align}
    	and
    	\begin{align}\label{8-7}
    		&\int_{\pl \tilde{B_r}(x)} F \frac{\pl N_1}{\pl n} \mbox{d}S_y = \int_{\pl \tilde{B_r}(x)} n_y \cdot \nabla_y N_1(x,y) F(y)\mbox{d}S_y \nonumber\\[2mm]
    		& = - \frac{1}{2\pi} \int_{\pl \tilde{B_r}(x)} \frac{\nabla \psi(y)[\psi(y)-\psi(x)]}{|\nabla \psi_1(y)||\psi(x)-\psi(y)|} \cdot \frac{\nabla \psi(y)[\psi(x)-\psi(y)]}{|\psi(x)-\psi(y)|^2} F(y) \mbox{d}S_y \nonumber\\[2mm]
    		& = -\frac{1}{2\pi r} \int_{\pl \tilde{B_r}(x)} |\nabla \psi_1(y)|F(y) \mbox{d}S_y= -\frac{1}{2\pi r}  \int_{\pl B_r( \psi(x))} F( \psi^{-1}(\tilde{y})) \mbox{d}S_{\tilde{y}} \to - F(x) \quad \mbox{as} \quad r \to 0,
    	\end{align}
    	where we have used \eqref{psi_1} and the argument in \cite{FLL2022}, Lemma 3.6. Adding \eqref{8-3} and \eqref{8-5} and using \eqref{8-6} and \eqref{8-7}, we obtain
    	\begin{align}\label{8-8}
    		-F(x) &= \int_{\Omega} \tilde{N}(x,y) \mbox{div}(\rho \tilde{u})(y) \mbox{d}y+ \int_{\partial\Omega} \frac{\pl \tilde{N}}{\pl n}(x,y)F(y) \mbox{d}S_y - \int_{\partial\Omega} \tilde{N}(x,y)(\rho \tilde{u} \cdot n(y)-\mu n^{\bot} \cdot \nabla (Au\cdot n^{\bot}) )\mbox{d}S_y\nonumber\\[2mm]
    		& = - \int_{\Omega} \nabla_y \tilde{N}(x,y) \cdot \rho \tilde{u}(y) \mbox{d}y + \int_{\partial\Omega} \frac{\pl \tilde{N}}{\pl n}(x,y)F(y) \mbox{d}S_y+ \mu \int_{\partial\Omega} \tilde{N}(x,y) n^{\bot}\cdot \nabla (Au\cdot n^{\bot}) \mbox{d}S_y,
    	\end{align}
    	which yields \eqref{lem8-1} and completes the proof of Lemma \ref{lem8}.

    \end{proof}

    We now estimate $F$ using its representation in \eqref{lem8-1}. For the first boundary term, Lemma \ref{lem7} and Lemma \ref{lem-cauchy-riemann} give
    \begin{equation}\label{F-boundaryestimate}
    	\max_{x \in \bar{\Omega}} | \int_{\partial\Omega} \frac{\pl \tilde{N}}{\pl n}(x,y)F(y) \mbox{d}S_y| \leq C \|F\|_{H^1},
    \end{equation}
    where we used the trace theorem (Lemma \ref{theorem-trace}) for $F$.
    
    Besides,
    \begin{align}
    	&| \int_{\partial\Omega} \tilde{N}(x,y) n^{\bot}\cdot \nabla (Au\cdot n^{\bot}) \mbox{d}S_y| = |\int_{\Omega} \mbox{div}(\nabla^{\bot}(Au\cdot n^{\bot})\tilde{N}(x,y))\mbox{d}y| \nonumber\\[2mm]
    	& \leq C\int_{\Omega} |\nabla \tilde{N}(x,y)|·(|\nabla u|+|u|)\mbox{d}x \leq C \|\nabla \tilde{N}\|_{L^{\frac{4}{3}}} \|\nabla u\|_{L^4} \leq CR_T^{\frac{1}{4}+\epsilon} A_1^{\frac{1}{2}}A_2^{\frac{1}{2}} + C R_T^{\epsilon}A_1.
    \end{align}
    
    It follows from \eqref{5-6}, \eqref{lem6-1}, and \eqref{F-boundaryestimate} that
    \begin{align}\label{F-boundaryestimate-2}
    	&\int_{0}^{T} \max_{x \in \bar{\Omega}} | \int_{\partial\Omega} \frac{\pl \tilde{N}}{\pl n}(x,y)F(y) \mbox{d}S_y| \mbox{d}t + \int_{0}^{T} \max_{x \in \bar{\Omega}} |\int_{\partial\Omega} \tilde{N}(x,y) n^{\bot}\cdot \nabla (Au\cdot n^{\bot}) \mbox{d}S_y| \mbox{d} t\nonumber\\[2mm]
    	& \leq \int_{0}^{T}( \|F\|_{H^1} + CR_T^{\frac{1}{4}+\epsilon} A_1^{\frac{1}{2}}A_2^{\frac{1}{2}} + C R_T^{\epsilon}A_1)\mbox{d}t \leq C \int_{0}^{T}( R_T^{\frac{1}{2}}A_2+ R_T^{\frac{1}{4}+\epsilon} A_1^{\frac{1}{2}}A_2^{\frac{1}{2}} + R_T^{\epsilon}A_1 )\mbox{d}t\nonumber\\[2mm]
    	& \leq C R_T^{\frac{1}{2}+\frac{\alpha}{2}} \left((\int_{0}^{T} \frac{A_2^2}{A_1^2}\mbox{d}t)^{\frac{1}{2}}(\int_{0}^{T} A_1^2\mbox{d}t)^{\frac{1}{2}} + 1 \right) \leq C(\alpha) R_T^{1+\alpha},
    \end{align}
    with $\alpha \in (0,1)$.

    Using the continuity equation and the boundary condition $(u\cdot n)|_{\pl \Omega}=0$, we rewrite the first term on the right-hand side of \eqref{lem8-1}, in the representation of $-F$, as follows:
    \begin{align}\label{J}
    	& - \int_{\Omega} \nabla_y \tilde{N}(x,y) \cdot \rho \tilde{u}(y) \mbox{d}y = - \int_{\Omega} \nabla_y \tilde{N}(x,y) \cdot \left( \frac{\pl(\rho u)}{\pl_t} + \mbox{div}( \rho u \otimes u) - \rho \nabla \Phi\right) \mbox{d}y\nonumber\\[2mm]
    	&= - \frac{\pl}{\pl t} \int_{\Omega} \pl_{y_j} \tilde{N}(x,y) \rho u_j(y)\mbox{d}y + \int_{\Omega} \pl_{y_i}\pl_{y_j}\tilde{N}(x,y)\rho u_i u_j(y)\mbox{d}y + \int_{\Omega} \pl_{y_j} \tilde{N}(x,y) \rho \pl_{y_j}\Phi \mbox{d}y\nonumber\\[2mm]
    	& = -(\pl_t + u \cdot \nabla)\int_{\Omega} \pl_{y_j} \tilde{N}(x,y) \rho u_j(y)\mbox{d}y + J_1+J_2,
    \end{align}
    where
    \begin{equation}\label{J1}
    	J_1:= \int_{\Omega}[ \pl_{x_i} \pl_{y_j}\tilde{N}(x,y) u_i(x) + \pl_{y_i} \pl_{y_j} \tilde{N}(x,y) u_i(y)  ] \rho u_j(y) \mbox{d}y,
    \end{equation}
    and
    \begin{equation}\label{J2}
    	J_2 := \int_{\Omega} \pl_{y_j} \tilde{N}(x,y) \rho \pl_{y_j}\Phi \mbox{d}y.
    \end{equation}
    
    Next, we establish the following pointwise estimate for $J_1$.
    
    \begin{lem}\label{lem9}
    	For $J_1$ as in \eqref{J1}, there exists a generic positive constant $C(\Omega)$ such that for any $x \in \Omega$ with $\psi(x) \neq 0$,
    	\begin{align}\label{lem9-1}
    		|J_1(x)| &\leq C(\Omega) \int_{\Omega} \frac{\rho |u|^2(y)}{|x-y|} \mbox{d}y + C(\Omega) \int_{\Omega} \frac{|u(x)-u(y)|}{|x-y|^2}\rho |u|(y) \mbox{d}y \nonumber\\[2mm]
    		&+ C(\Omega) \int_{\Omega} \frac{|u(x')-u(y)|}{|x'-y|^2}\rho |u|(y) \mbox{d}y,
    	\end{align}
    	where
    	\begin{equation}\label{lem9-2}
    		x':= \psi^{-1}\left( \frac{\psi(x)}{|\psi(x)|} \right) \in \pl \Omega.
    	\end{equation}
    \end{lem}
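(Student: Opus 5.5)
The plan is to follow the computation of \cite{FLL2022}, Lemma 3.7. The kernel in $J_1$ is built only from the pull-back Green function $\tilde N$, and the Poisson force enters $J_2$ alone. So the argument is purely geometric.

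\textbf{Step 1: split and compute the derivatives.} Write $\tilde N=N_1+N_2$ as in \eqref{8-1}--\eqref{8-2}, and split $J_1$ accordingly. Set $\Psi=\psi(x)$, $\Theta=\psi(y)$ and $D\psi=\nabla\psi$. Using the Cauchy--Riemann relations \eqref{psi_1}, I compute
\[
\partial_{y_j}N_1=\frac{1}{2\pi}\frac{\partial_j\psi_k(y)(\Psi-\Theta)_k}{|\Psi-\Theta|^2},
\qquad
\partial_{x_i}\partial_{y_j}N_1,\quad \partial_{y_i}\partial_{y_j}N_1 .
\]
For the $N_1$ part, the combination $\partial_{x_i}\partial_{y_j}N_1\,u_i(x)+\partial_{y_i}\partial_{y_j}N_1\,u_i(y)$ splits into two kinds of terms.

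The first kind are terms where a second derivative of $\psi$ falls. These are bounded by $C/|x-y|$ by Lemma \ref{lem-cauchy-riemann} and the bi-Lipschitz bound $c_1|x-y|\le|\psi(x)-\psi(y)|$.

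The second kind is the leading singular part. It has the form $K_{ij}(x,y)\,\bigl(\partial\psi(x)u(x)-\partial\psi(y)u(y)\bigr)$ with $|K|\le C|x-y|^{-2}$. Adding and subtracting $\partial\psi(y)u(x)$ gives
\[
\bigl|\nabla\psi(x)u(x)-\nabla\psi(y)u(y)\bigr|\le C|x-y||u(x)|+C|u(x)-u(y)| .
\]
The piece $|x-y||u(x)|$ must not produce $|u(x)|$ alone. So I arrange the sum so that the symmetric cancellation $\partial_{x}+\partial_{y}$ acting on $|\Psi-\Theta|^{-2}(\Psi-\Theta)$ is exploited. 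That is, I write $u_i(x)=u_i(y)+(u_i(x)-u_i(y))$. The $u_i(y)$ part then produces only a derivative hitting $D\psi(x)-D\psi(y)$, which is $O(|x-y|)$ and hence yields the first integral $\int\rho|u|^2/|x-y|$. The remaining difference part yields the second integral.

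\textbf{Step 2: the reflected part $N_2$.} Here the singularity sits at the reflected point. Writing $|\Psi|\Theta-\Psi/|\Psi|$ and using $\big||\Psi|\Theta-\Psi/|\Psi|\big|\ge c|\Psi/|\Psi|-\Theta|$ (the standard disc inequality), I get
\[
\Bigl||\Psi|\Theta-\frac{\Psi}{|\Psi|}\Bigr|\ge c\,|x'-y|,\qquad |x'-y|\le C|x-y| ,
\]
with $x'$ as in \eqref{lem9-2}. This requires $\psi(x)\neq0$ so that $x'$ is defined. I repeat the same derivative computation with $\Psi$ replaced by $\Psi/|\Psi|=\psi(x')$. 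The $x$-derivatives of $|\Psi|$ and $\Psi/|\Psi|$ are smooth away from $\Psi=0$, but they blow up like $|\Psi|^{-1}$. I expect this to be the main obstacle.

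To handle it, I would use the identity $|\Psi|\Theta-\Psi/|\Psi|=|\Psi|\,(\Theta-\Psi/|\Psi|^2)$. Thus $N_2$ equals $-\frac1{2\pi}\bigl(\log|\Psi|+\log|\Theta-\Psi^*|\bigr)$, where $\Psi^*=\Psi/|\Psi|^2$. The first term is $y$-independent, so $\partial_{y_j}$ kills it. In the second, $x$-derivatives act through $\Psi^*$ with bounded Jacobian, since $|\Psi|$ is bounded below whenever $y$ is near the singularity $\Psi^*$.

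Then the leading kernel pairs $u_i(y)$ with $u$ evaluated at $x'$. I use Remark \ref{rmk-boundary}, which gives $(u\cdot\nabla)\psi(x')\perp\psi(x')$, together with angle preservation (Lemma \ref{lem-conformal}). These let the $x$-derivative term combine with $\partial_{y_i}$ into a difference $u(x')-u(y)$ plus $O(|x'-y|)|u|$ terms. This gives the third integral, with the remainder absorbed into $\int\rho|u|^2/|x-y|$ because $|x'-y|\gtrsim|x-y|$ is not needed there; one only needs $|x'-y|^{-1}\lesssim$ the first-type kernel after bounding $|u(x)|$ contributions by $\rho|u|(y)$ terms. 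If this step fails as written, I would fall back to the explicit formulas in \cite{FLL2022}, Lemma 3.7.
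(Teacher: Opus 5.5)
Your route is the paper's: the paper gives no argument of its own here and refers to \cite{FLL2022}, Proposition 3.2. Your Step 1 for $N_1$ is correct. The right splitting is $\nabla\psi(x)(u(x)-u(y))+(\nabla\psi(x)-\nabla\psi(y))u(y)$, which is what your rewriting $u(x)=u(y)+(u(x)-u(y))$ produces; your first attempt, adding and subtracting $\nabla\psi(y)u(x)$, would leave an uncontrolled $|u(x)|$.

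Step 2, however, does not close as written. The claim $|x'-y|\le C|x-y|$ is false: take $y=x$ with $x$ interior. The comparison $|x'-y|^{-1}\lesssim|x-y|^{-1}$ in your last sentence is also false: let $y\to x'$ with $x$ fixed in the interior. Neither distance controls the other, so your remainders are not absorbed. The sketch also never explains how $u(x)$, which is what actually multiplies $\partial_{x_i}\partial_{y_j}N_2$ in $J_1$, is traded for $u(x')$. Finally, the inversion does not have bounded Jacobian. With $r=|\Psi|$, $e=\Psi/r=\psi(x')$ and $v=(u\cdot\nabla)\psi$, one has $(u(x)\cdot\nabla)\Psi^*=r^{-2}R_e v(x)$, where $R_e=I-2ee^{T}$.

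The missing idea is that the reflected distance $D:=|r\Theta-e|$ dominates all three relevant distances at once:
\[
D\ge|\Psi-\Theta|\ge c|x-y|,\qquad D\ge\tfrac12|e-\Theta|\ge c|x'-y|,\qquad D\ge 1-r\ge c|x-x'|.
\]
The first bound follows from $D^2-|\Psi-\Theta|^2=(1-r^2)(1-|\Theta|^2)\ge0$, and the second from $|e-\Theta|\le D+(1-r)\le 2D$.

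Set $G(z)=z/|z|^2$. Since $|\nabla G(\Psi^*-\Theta)|=r^2D^{-2}$, the singular part of the $N_2$ contribution is bounded by $CD^{-2}|R_ev(x)-r^2v(y)|$. Decompose
\[
R_ev(x)-r^2v(y)=R_e\bigl(v(x)-v(x')\bigr)+\bigl(v(x')-v(y)\bigr)+(1-r^2)v(y),
\]
where Remark \ref{rmk-boundary} gives $e\cdot v(x')=0$, i.e.\ $R_ev(x')=v(x')$. This is exactly where tangency enters. Each piece is then controlled as follows:
\begin{enumerate}
\item Splitting $|u(x)-u(x')|\le|u(x)-u(y)|+|u(x')-u(y)|$ sends the first bracket into the second and third integrals.
\item For the rest of the first bracket, use $|x-x'|\le C(1-r)\le CD$ together with $|u(x')|\le|u(y)|+|u(x')-u(y)|$.
\item The bound $D^2\ge c|x-y|\,|x'-y|$ turns $|x'-y|\,|u(y)|D^{-2}$ into $C|u(y)|/|x-y|$.
\item For the last term, $(1-r^2)D^{-2}\le 2D^{-1}$.
\end{enumerate}
Together with the lower-order term $r|u(y)|/D\le C|u(y)|/|x-y|$, this yields exactly \eqref{lem9-1}.
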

    
    \begin{proof}
    	
    	We omit the proof and refer the reader to Proposition 3.2 in \cite{FLL2022}, whose proof relies mainly on Lemma \ref{lem-cauchy-riemann} and Lemma \ref{lem-conformal}.
    	
    \end{proof}
    
    To obtain precise control of $F$, we focus on the crucial terms of commutator type. Although these terms are not commutators in the usual sense and the classical $L^p$ theory is difficult to apply directly, we can obtain suitable estimates by an alternative method.
    
    \begin{lem}\label{lem10}
    	For any $\epsilon>0$, we have
    	\begin{equation}\label{lem10-1}
    		\int_{0}^{T} \max_{x \in \bar{\Omega}} \int_{\Omega} \frac{|u(x)-u(y)|}{|x-y|^2}\rho |u|(y) \mbox{d}y \mbox{d}t \leq C(\epsilon)R_T^{1+\frac{\beta}{4}+3\epsilon}.
    	\end{equation}
    \end{lem}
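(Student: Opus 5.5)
Our plan follows the commutator estimate of \cite{FLL2022} (and \cite{HL2016}), with the extra input now being the weaker density bounds of Lemma \ref{lem2}. Fix $x\in\bar\Omega$ and split the $y$-integral into the near region $|x-y|<\delta$ and the far region $|x-y|\ge\delta$, with $\delta\in(0,1]$ chosen later as a power of $R_T$ and of $A_1,A_2$.

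\emph{Near region.} Here we use the Morrey-type bound $|u(x)-u(y)|\le C(p)|x-y|^{1-2/p}\|\nabla u\|_{L^p}$ for $p>2$ (valid on the Lipschitz domain $\Omega$). This gives
\[
\int_{|x-y|<\delta}\frac{|u(x)-u(y)|}{|x-y|^2}\rho|u|(y)\,dy\le C\|\nabla u\|_{L^p}\,\|\rho u\|_{L^{s}}\,\delta^{\kappa},
\]
where $s$ is chosen so that $|x-y|^{-1-2/p}\in L^{s'}$ near $x$ and $\kappa>0$ is small. For the factor $\|\rho u\|_{L^s}$ we use $\rho\le R_T$, the bound $\int\rho|u|^{2+\nu}\le C$ from Lemma \ref{lem4}, and Lemma \ref{lem2}. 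This yields $\|\rho u\|_{L^{s}}\le CR_T^{1/2+\epsilon}$ for $s$ close to $2$, or alternatively $CR_T^{1-\cdots}\|\nabla u\|_{L^2}$ via Lemma \ref{lem-poincare}.

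\emph{Far region.} We bound $|u(x)-u(y)|\le|u(x)|+|u(y)|$. The resulting pieces $\int_{|x-y|\ge\delta}|x-y|^{-2}\rho|u|^2(y)\,dy$ and $|u(x)|\int_{|x-y|\ge\delta}|x-y|^{-2}\rho|u|\,dy$ are controlled with a logarithmic or $\delta^{-2/r}$ loss by Hölder with $\rho|u|^{2+\nu}$ and the $L^p$ bounds on $\rho$. For $|u(x)|$ we need an $L^\infty$ bound, which we get from $\|u\|_{L^\infty}\le C\|u\|_{H^1}^{1-2/p}\|\nabla u\|_{L^p}^{2/p}$-type interpolation, or simply $\|u\|_{W^{1,p}}$.

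\emph{Closing.} We insert Lemma \ref{lem5} for $\|\nabla u\|_{L^p}$ with $p$ large, namely
\[
\|\nabla u\|_{L^p}\le CR_T^{1/2+\epsilon}A_1^{2/p}A_2^{1-2/p}+CR_T^\epsilon A_1,
\]
and optimize $\delta$ to balance the two regions. The result should be a pointwise-in-time bound of the form $CR_T^{\frac12+\frac\beta4+2\epsilon}A_1^{a}A_2^{b}+CR_T^{1+\epsilon}A_1^2$ with $a+b=2$ effectively. We then integrate in time, writing $A_2\le (A_2/A_1)A_1$ and applying Cauchy--Schwarz with $\int_0^TA_1^2\le C$ from \eqref{4-2} and Proposition \ref{lem6}, which gives $\int_0^T A_2^2/A_1^2\le CR_T^{1+\alpha}$.

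\emph{Source of the $\beta/4$.} Any term in $\|F\|_{L^2}$ or in $\|\mathrm{div}\,u\|$ weighted by $(2\mu+\lambda)^{-1/2}$ costs $R_T^{\beta/2}$. The square-root splitting of $\|\nabla u\|_{L^4}\lesssim A_1^{1/2}\|(F,\omega)\|_{H^1}^{1/2}$, with $\|F\|_{L^2}\le R_T^{\beta/2}A_1$, produces the $R_T^{\beta/4}$.

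The main obstacle is exactly this bookkeeping of exponents. The near-region estimate must not lose more than $R_T^{\beta/4+3\epsilon}$ beyond the factor $R_T^{1}$ that comes from time integration via Proposition \ref{lem6}. We would therefore first try to use $p=4$ in the Morrey step, so that the estimate $\|\nabla u\|_{L^4}\le CR_T^{1/4+\epsilon}A_1^{1/2}A_2^{1/2}+CR_T^{\epsilon}A_1$ enters to power one. The remaining factor $\rho u$ is then taken in $L^{4/3+}$-compatible norms bounded by $R_T^{\beta/4}$ using Lemma \ref{lem2} and Lemma \ref{lem4}, with the $\epsilon$-losses absorbing the choice of $\delta$.
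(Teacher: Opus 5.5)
Your proposal has a genuine gap: it is missing the mechanism that lets the time integration close, and the bookkeeping you propose does not work. After Lemma \ref{lem5}, the only usable information in time is $\int_0^T A_1^2\,dt\le C$ and $\int_0^T A_2^2/A_1^2\,dt\le C(\alpha)R_T^{1+\alpha}$ from Proposition \ref{lem6}. The bound $\sup_t\log A_1^2\le C(\alpha)R_T^{1+\alpha}$ only gives $A_1\le e^{CR_T^{1+\alpha}}$, so no extra power of $A_1$ is affordable. A pointwise bound $A_1^aA_2^b$ is therefore integrable polynomially in $R_T$ only when it has the form $(A_1^2)^c(A_2^2/A_1^2)^d$ with $c+d\le1$, i.e.\ $a+2b\le2$. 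Your target ``$a+b=2$'' gives $A_1^2(A_2/A_1)^b$, and for $b>0$ this needs $\int_0^TA_1^q\,dt$ with some $q>2$. Since $\|\nabla u\|_{L^p}$ already contributes $A_1(A_2^2/A_1^2)^{1/2-1/p}$, the potential $\int_\Omega|x-y|^{-1-2/p}\rho|u|(y)\,dy$ may carry at most $A_1^{2/p}$. It cannot carry the full factor $A_1$ that $\|u\|_{L^s}\le C\sqrt s\,\|u\|_{H^1}$ produces.

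The paper handles this as follows.
\begin{itemize}
\item It applies the Morrey bound on all of $\Omega$. Your splitting $|u(x)|+|u(y)|$ in the far region would bring in $\|u\|_{L^\infty}$, hence further factors of $A_1$ and $\|\nabla u\|_{L^p}$.
\item It splits only the potential. The near part is bounded by $CR_T\epsilon_0^{-1/2}A_1\delta^{1-\frac2p-\frac{2\epsilon_0}{1+\epsilon_0}}$.
\item The far part is bounded \emph{with no $A_1$ at all}, using Lemma \ref{lem4}, by $CR_T^{\frac{1+\nu}{2+\nu}}\delta^{-\frac2p+\frac{\nu}{2+\nu}}$.
\item The tiny gain $\delta^{\nu/(2+\nu)}$ is what allows the choices $\delta^{-\frac2p+\frac{\nu}{2+\nu}}=A_1^{2/p}$ and $\epsilon_0=\frac{(p-2)\nu}{8+(6-p)\nu}$, which make both pieces equal to $A_1^{2/p}$.
\end{itemize}
This forces $\epsilon_0\sim\nu=\nu_0R_T^{-\beta/2}$, and $\epsilon_0^{-1/2}\sim R_T^{\beta/4}$ is the actual source of the $\beta/4$. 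It does not come from Lemma \ref{lem5}, which loses only $R_T^{\epsilon}$, as your own formula for $\|\nabla u\|_{L^p}$ already shows.

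Several concrete steps also fail.
\begin{itemize}
\item A bound of $\|\rho u\|_{L^s}$ by a pure power of $R_T$ is not available. The kernel $|x-y|^{-1-2/p}$ requires $s>\frac{2p}{p-2}$; for $p=4$ that means $s>4$, not $L^{4/3+}$. Lemma \ref{lem4} controls only $\rho|u|^{2+\nu}$ with $\nu\to0$, so using it near $x$ would force $p\gtrsim\nu^{-1}\sim R_T^{\beta/2}$, and the $p$-dependent constants would destroy the exponent.
\item The value of $p$ matters. Lemma \ref{lem5} gives $R_T^{\frac12-\frac1p+\epsilon}$, not $R_T^{\frac12+\epsilon}$. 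After time integration its factor costs $R_T^{\frac12-\frac1p+\epsilon}(R_T^{1+\alpha})^{\frac12-\frac1p}$, so the total is $R_T^{2-\frac2p+\frac\beta4+2\epsilon}$.
\item With $p=4$ this total is $R_T^{\frac32+\frac\beta4+2\epsilon}$, which would close Lemma \ref{lem11} only for $\beta>2$. The statement requires $p\downarrow2$, specifically $p=\frac{2}{1-\epsilon}$, so that $2-\frac2p=1+\epsilon$. The resulting constants, e.g.\ $\epsilon_0^{-1/2}\sim((p-2)\nu)^{-1/2}$, then only add $C(\epsilon)$ and $R_T^{\beta/4}$.
\end{itemize}
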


    \begin{proof}
    	
    	First, let $2<p<4$ be chosen later. By the Sobolev embedding theorem (Theorem 5 of [\cite{E2010}, Chapter 5]), we have, for any $x,y \in \bar{\Omega}$,
    	\begin{equation}\label{10-1}
    		|u(x)-u(y)| \leq C(p) \|\nabla u\|_{L^p} |x-y|^{1-\frac{2}{p}},
    	\end{equation}
    	which implies
    	\begin{equation}\label{10-2}
    		\int_{\Omega} \frac{|u(x)-u(y)|}{|x-y|^2}\rho |u| (y)\mbox{d}y \leq C(p) \|\nabla u\|_{L^p} \int_{\Omega} |x-y|^{-(1+\frac{2}{p})} \rho |u|(y)\mbox{d}y.
    	\end{equation}
    	
    	Let $\delta>0$ and $\epsilon_0 \in (0,\frac{p-2}{8})$ be chosen later. On the one hand, H$\ddot{o}$lder's inequality and Lemma \ref{lem-poincare} give
    	\begin{align}\label{10-3}
    		& \int_{|x-y|<2 \delta} |x-y|^{-(1+\frac{2}{p})} \rho |u|(y)\mbox{d}y \leq C(p) R_T ( \int_{|x-y|<2 \delta} |x-y|^{-(1+\frac{2}{p})(1+\epsilon_0)} \mbox{d}y)^{\frac{1}{1+\epsilon_0}} \|u\|_{L^{\frac{1+\epsilon_0}{\epsilon_0}}}\nonumber\\[2mm]
    		& \leq C(p) R_T \delta^{1-\frac{2}{p}- \frac{2\epsilon_0}{1+\epsilon_0}} ( \frac{1+\epsilon_0}{\epsilon_0} )^{\frac{1}{2}} \|u\|_{H^1} \leq C(p) R_T \epsilon_0^{-\frac{1}{2}} A_1 \delta^{1-\frac{2}{p}- \frac{2\epsilon_0}{1+\epsilon_0}},
    	\end{align}
    	where we have used \eqref{4-1} to estimate $\|u\|_{H^1}$.
    	
    	On the other hand, for $\nu = R_T^{-\frac{\beta}{2}}\nu_0$ as defined in \eqref{lem4-2}, another application of H$\ddot{o}$lder's inequality, together with Lemma \ref{lem4}, gives
    	\begin{align}\label{10-4}
    		&\int_{|x-y|> \delta} |x-y|^{-(1+\frac{2}{p})} \rho |u|(y)\mbox{d}y\nonumber\\[2mm]
    		&\leq C(p) \left( \int_{|x-y|> \delta} |x-y|^{-(1+\frac{2}{p})(\frac{2+\nu}{1+\nu})} \mbox{d}y \right)^{\frac{1+\nu}{2+\nu}}\left( \int_{\Omega} \rho^{2+\nu}|u|^{2+\nu}\right)^{\frac{1}{2+\nu}} \leq C(p) R_T^{\frac{1+\nu}{2+\nu}}\delta^{-\frac{2}{p}+\frac{\nu}{2+\nu}},
    	\end{align}
    	since the definition of $R_T$ gives
    	$$\left( \int_{\Omega} \rho^{2+\nu} |u|^{2+\nu} \right)^{\frac{1}{2+\nu}} \leq R_T^{\frac{1+\nu}{2+\nu}} \left( \int_{\Omega} \rho |u|^{2+\nu} \right)^{\frac{1}{2+\nu}} \leq CR_T^{\frac{1+\nu}{2+\nu}}.$$
    	
    	Now, we choose $\delta>0$ such that
    	\begin{equation}\label{10-5}
    		\delta^{-\frac{2}{p}+\frac{\nu}{2+\nu}}= A_1^{\frac{2}{p}},
    	\end{equation}
    	which implies
    	\begin{equation}\label{10-6}
    		A_1\delta^{1-\frac{2}{p}- \frac{2\epsilon_0}{1+\epsilon_0}}= A_1^{\frac{2}{p}},
    	\end{equation}
    	provided we set
    	\begin{equation}\label{10-7}
    		\epsilon_0= \frac{(p-2)\nu}{8+(6-p)\nu} \in (0,\frac{p-2}{8}),
    	\end{equation}
    	where we used $\nu \in (0,1)$ and $2<p<4$.
    	
    	Then, it follows from \eqref{10-3}-\eqref{10-6} that
    	\begin{align}\label{10-8}
    		& \int_{\Omega} |x-y|^{-(1+\frac{2}{p})} \rho |u|(y)\mbox{d}y \leq C(p) R_T \epsilon_0^{-\frac{1}{2}}A_1^{\frac{2}{p}}+ C(p)R_T^{\frac{1+\nu}{2+\nu}}A_1^{\frac{2}{p}}\nonumber\\[2mm]
    		& \leq C(p)R_T^{1+\frac{\beta}{4}}A_1^{\frac{2}{p}},
    	\end{align}
    	where, in the last line, we have used $\epsilon_0^{-\frac{1}{2}}\leq C(p)\nu^{-\frac{1}{2}}\leq C(p)R_T^{\frac{\beta}{4}}$, which follows from the definition of $\epsilon_0$ and the identity $\nu=R_T^{-\frac{\beta}{2}}\nu_0$, with $\nu_0$ a small constant in $(0,1)$.
    	
    	Combining \eqref{10-8}, \eqref{10-2}, and Lemma \ref{lem5} shows that for any $\epsilon>0$,
    	\begin{align}\label{10-9}
    		& \int_{\Omega} \frac{|u(x)-u(y)|}{|x-y|^2}\rho |u| (y)\mbox{d}y \leq C(p) \|\nabla u\|_{L^p}R_T^{1+\frac{\beta}{4}}A_1^{\frac{2}{p}}\nonumber\\[2mm]
    		& \leq R_T^{1+\frac{\beta}{4}}A_1^{\frac{2}{p}}\left(C(p,\epsilon)R_T^{\frac{1}{2}-\frac{1}{p}+\epsilon}A_1 \left( \frac{A^2_2}{A_1^2} \right)^{\frac{1}{2}-\frac{1}{p}} + C(p,\epsilon)R_T^{\epsilon}A_1\right)\nonumber\\[2mm]
    		& \leq C(p,\epsilon) R_T^{\frac{3}{2}-\frac{1}{p}+\epsilon+\frac{\beta}{4}}A_1^{1+\frac{2}{p}} \left( \frac{A^2_2}{A_1^2} \right)^{\frac{1}{2}-\frac{1}{p}} + C(p,\epsilon)R_T^{1+\epsilon+\frac{\beta}{4}}A_1^{1+\frac{2}{p}}.
    	\end{align}
    	
    	Finally, integrating with respect to $t$ and applying H$\ddot{o}$lder's inequality, we obtain
    	\begin{align}\label{10-10}
    		& \int_{0}^{T} \max_{x \in \bar{\Omega}} \int_{\Omega} \frac{|u(x)-u(y)|}{|x-y|^2}\rho |u|(y) \mbox{d}y \mbox{d}t\nonumber\\[2mm]
    		& \leq C(p,\epsilon)R_T^{\frac{3}{2}-\frac{1}{p}+\epsilon+\frac{\beta}{4}} \left( \int_{0}^{T}A_1^2\mbox{d}t\right)^{\frac{1}{2}+\frac{1}{p}}\left( \int_{0}^{T} \frac{A_2^2}{A_1^2}\mbox{d}t\right)^{\frac{1}{2}-\frac{1}{p}} +C(p,\epsilon)R_T^{1+\epsilon+\frac{\beta}{4}} \left( \int_{0}^{T}A_1^2\mbox{d}t\right)^{\frac{1}{2}+\frac{1}{p}}\nonumber\\[2mm]
    		& \leq C(p,\epsilon) R_T^{2-\frac{2}{p}+2\epsilon+\frac{\beta}{4} },
    	\end{align}
    	where, in the last line, we have used \eqref{4-2} to estimate $\int_{0}^{T}A_1^2\mbox{d}t$ and Lemma \ref{lem6} to estimate $\int_{0}^{T} \frac{A_2^2}{A_1^2}\mbox{d}t$. Choosing $p=\frac{2}{1-\epsilon}$ yields \eqref{lem10-1} and completes the proof of Lemma \ref{lem10}.

    \end{proof}

    We can now establish the upper bound for the density, which is essential to the overall argument.
    
    \begin{lem}\label{lem11}
    	Assume that \eqref{betagamma} holds. Then there exists some positive constant $C$ depending only on $\Omega,T,\mu,\beta, \gamma$, $\|b(x)\|_{L^{\infty}}$, $\|\rho_0\|_{L^{\infty}}$, and $\|u_0\|_{H^1}$ such that
    	\begin{equation}\label{lem11-1}
    		\sup_{0 \leq t \leq T}( \|\rho \|_{L^{\infty}}+ \|u\|_{H^1}) + \int_{0}^{T} ( \|\omega\|_{H^1}^2 + \|F\|_{H^1}^2 + \|\sqrt{\rho}\dot{u}\|^2_{L^2} ) \mbox{d}t \leq C.
    	\end{equation}
    \end{lem}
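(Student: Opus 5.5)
The plan is to derive the inequality announced in the introduction,
\[
R_T^\beta\le C(\varepsilon)R_T^{\max\{1+\frac{\beta}{4}+3\varepsilon,\frac{2+\beta}{3},1+\varepsilon\}},
\]
from the transport identity for $\theta(\rho)$, and then close the higher-order bounds by Gronwall. The continuity equation gives $\frac{D}{Dt}\theta(\rho)+P=-F$. We insert the representation of $-F$ from Lemma \ref{lem8} together with the decomposition \eqref{J}. Writing
\[
\Psi(x,t):=\int_\Omega \partial_{y_j}\tilde N(x,y)\rho u_j(y)\,dy,
\]
we obtain along particle paths
\[
\frac{D}{Dt}\big(\theta(\rho)+\Psi\big)+P=J_1+J_2+\mathcal B,
\]
where $\mathcal B$ collects the two boundary terms of \eqref{lem8-1}. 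Integrating in time along a trajectory gives
\[
\theta(\rho(x,t))\le C+\sup|\Psi|+\int_0^T\big(\|J_1\|_{L^\infty}+\|J_2\|_{L^\infty}+\|\mathcal B\|_{L^\infty}\big)dt.
\]
Since $\theta(\rho)\ge\rho^\beta/\beta-C$ for $\rho\ge1$, taking the supremum yields $R_T^\beta$ on the left.

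The individual terms are estimated as follows.

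\textbf{Boundary terms.} \eqref{F-boundaryestimate-2} gives $\int_0^T\|\mathcal B\|_{L^\infty}dt\le C(\alpha)R_T^{1+\alpha}$.

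\textbf{The term $J_1$.} Lemma \ref{lem9} reduces $J_1$ to three pieces. The commutator-type pieces at $x$ and at $x'\in\partial\Omega$ are each bounded by Lemma \ref{lem10}, giving $C(\varepsilon)R_T^{1+\beta/4+3\varepsilon}$. For the first piece we use $|x-y|^{-1}\in L^{2-}$ and Lemma \ref{lem2} for $\rho$:
\[
\int_\Omega\frac{\rho|u|^2}{|x-y|}\,dy\le C R_T^{\varepsilon}\|u\|_{H^1}^2 .
\]
Its time integral is then bounded by $CR_T^\varepsilon$ via \eqref{4-2}.

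\textbf{The term $J_2$.} By \eqref{green-gradient}-type bounds, $|\nabla_y\tilde N|\le C|x-y|^{-1}$ (using Lemma \ref{lem-cauchy-riemann} for the pulled-back kernel). Hölder's inequality with Lemma \ref{lem2} for $\rho$ and $\nabla\Phi$ in $L^p$, $p$ large, then gives $|J_2|\le C$ uniformly, so its time integral is $\le C(T)$. If one prefers to use $R_T$, one gets $CR_T^{\varepsilon}$.

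\textbf{The term $\Psi$.} We split $\int\rho|u|/|x-y|$ with a Hölder exponent near $2$:
\[
|\Psi|\le C\|\rho u\|_{L^{2+\nu}}\le C R_T^{\frac{1+\nu}{2+\nu}}\Big(\int\rho|u|^{2+\nu}\Big)^{\frac{1}{2+\nu}}.
\]
By Lemma \ref{lem4} this is at most $CR_T^{1/2+}$. That exponent is not sharp enough, so we interpolate. We write $\rho|u|=\rho^{1/3}\cdot\rho^{2/3}|u|$ and bound
\[
\|\rho u\|_{L^{3-}}\le C R_T^{\frac{2}{3}}\big(\|\rho^{1/2}u\|_{L^2}\|u\|_{L^\infty\text{-type}}\big)^{1/3}.
\]
Using $\sup_t\|u\|_{H^1}\le C\sup_t A_1\le \exp(CR_T^{1+\alpha})$ from Proposition \ref{lem6} would be too weak. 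Instead I would use Lemma \ref{lem4} with the $\nu$-loss absorbed, aiming for the $\frac{2+\beta}{3}$ exponent, i.e. $|\Psi|\le C R_T^{(2+\beta)/3}$. \emph{This bookkeeping (extracting exactly $\frac{2+\beta}{3}$, which is $<\beta$ iff $\beta>1$) is where I am least certain.} If it resists, the fallback is $|\Psi|\le C(\varepsilon)R_T^{1/2+\beta/4+\varepsilon}$, using $\nu^{-1/2}\sim R_T^{\beta/4}$ as in Lemma \ref{lem10}; this suffices since it is also $<\beta$.

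\textbf{Closing the density bound.} Choosing $\alpha=\varepsilon$ small and using $\beta>4/3$ makes every exponent strictly less than $\beta$, indeed $1+\beta/4<\beta\iff\beta>4/3$. Young's inequality then gives $R_T\le C$. The main obstacle is precisely the sharp control of the commutator $J_1$ (already handled by Lemma \ref{lem10}) together with the careful tracking of powers in $\Psi$.

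\textbf{Remaining bounds in \eqref{lem11-1}.} With $R_T\le C$, Proposition \ref{lem6} gives $\sup_t\log A_1^2\le C$, hence $\sup_t\|u\|_{H^1}\le C$. Returning to \eqref{6-9} with $R_T$ bounded, we get $\frac{d}{dt}A_1^2+A_2^2\le CA_1^4$, and Gronwall with $\int_0^TA_1^2\le C$ gives $\int_0^TA_2^2\le C$. Then \eqref{5-5} controls $\int_0^T(\|F\|_{H^1}^2+\|\omega\|_{H^1}^2)dt$. Finally, $\|\sqrt\rho\dot u\|_{L^2}\le A_2+C\|\sqrt\rho\nabla\Phi\|_{L^2}\le A_2+C$ by Lemma \ref{lem2}, which completes \eqref{lem11-1}.
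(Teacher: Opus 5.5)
Your proposal is correct and is essentially the paper's proof. It uses the same transport identity for $\theta(\rho)$, the pulled-back Green representation with \eqref{J}, and Lemmas \ref{lem9}--\ref{lem10} together with \eqref{F-boundaryestimate-2} to reach $R_T^\beta\le C(\varepsilon)R_T^{\kappa}$ with $\kappa<\beta$, and then \eqref{6-9}, \eqref{5-5} and Gronwall for the remaining bounds. Your fallback $|\Psi|\le CR_T^{1/2+\beta/4}$ is exactly what the paper's H\"older step \eqref{11-3} yields before it rounds $\frac{1+\nu}{2+\nu}\le\frac23$ in $R_T^{(1+\beta/2)\frac{1+\nu}{2+\nu}}$ to get $\frac{2+\beta}{3}$; your first attempt $|\Psi|\le C\|\rho u\|_{L^{2+\nu}}$ had simply dropped the kernel factor $\nu^{-1/2}\sim R_T^{\beta/4}$. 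Your use of Lemma \ref{lem2} for $J_2$ and for $\int\rho|u|^2|x-y|^{-1}dy$ is a harmless sharpening of the paper's cruder $R_T$-powers.
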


    \begin{proof}
    	
    	For $\theta(\rho)$ defined in \eqref{theta}, equations \eqref{origin}, \eqref{Fdefinition}, and \eqref{omegadefinition} give
    	\begin{equation}\label{11-1}
    		\frac{D}{Dt} \theta(\rho) = - 2\mu \mbox{div}u - \rho^{\beta}\mbox{div}u = -(2\mu + \lambda{\rho})\mbox{div}u = -F-P \leq -F,
    	\end{equation} 
    	where we used that $P=\rho^{\gamma}\geq 0$.
    	
    	This, together with Lemma \ref{lem8}, \eqref{J}, \eqref{F-boundaryestimate}, and Lemma \ref{lem9}, gives
    	\begin{align}\label{11-2}
    		\frac{D}{Dt} \theta(\rho) & \leq -\frac{D}{Dt} \int_{\Omega} \pl_{y_j} \tilde{N}(x,y) \rho u_j(y) \mbox{d}y + \max_{x \in \bar{\Omega}} | \int_{\partial\Omega} \frac{\pl \tilde{N}}{\pl n} (x,y) F(y) \mbox{d}S_y| \nonumber\\[2mm]
    		&+ \max_{x \in \bar{\Omega}} |\int_{\partial\Omega} \tilde{N}(x,y) n^{\bot}\cdot \nabla (Au\cdot n^{\bot}) \mbox{d}S_y| + |J_1|+|J_2|\nonumber\\[2mm]
    		& \leq  -\frac{D}{Dt} \int_{\Omega} \pl_{y_j} \tilde{N}(x,y) \rho u_j(y) \mbox{d}y + \max_{x \in \bar{\Omega}} | \int_{\partial\Omega} \frac{\pl \tilde{N}}{\pl n} (x,y) F(y) \mbox{d}S_y|\nonumber\\[2mm]
    		&+ \max_{x \in \bar{\Omega}} |\int_{\partial\Omega} \tilde{N}(x,y) n^{\bot}\cdot \nabla (Au\cdot n^{\bot}) \mbox{d}S_y| +\max_{x \in \bar{\Omega}} \int_{\Omega} \frac{\rho |u|^2(y)}{|x-y|} \mbox{d}y \nonumber\\[2mm]
    		& + \max_{x \in \bar{\Omega}} \int_{\Omega} \frac{|u(x)-u(y)|}{|x-y|^2}\rho |u|(y) \mbox{d}y + \max_{x \in \bar{\Omega}} | \int_{\Omega} \pl_{y_j} \tilde{N}(x,y) \rho \pl_{y_j}\Phi \mbox{d}y|.
    	\end{align}
    	
    	On the one hand, a direct calculation gives, for $\nu = R_T^{-\frac{\beta}{2}}$ as in Lemma \ref{lem4},
    	\begin{align}\label{11-3}
    		& | \int_{\Omega} \pl_{y_j} \tilde{N}(x,y) \rho u_j(y) \mbox{d}y| \leq C \int_{\Omega} |x-y|^{-1} \rho (y) |u(y)| \mbox{d}y\nonumber\\[2mm]
    		& \leq C ( \int_{\Omega} |x-y|^{-\frac{2+\nu}{1+\nu}} \mbox{d}y)^{\frac{1+\nu}{2+\nu}}( \int_{\Omega} \rho^{2+\nu} |u|^{2+\nu} \mbox{d}y)^{\frac{1}{2+\nu}} \leq C \nu^{-\frac{1+\nu}{2+\nu}}R_T^{\frac{1+\nu}{2+\nu}}(\int_{\Omega} \rho |u|^{2+\nu} \mbox{d}y)^{\frac{1}{2+\nu}}\nonumber\\[2mm]
    		& \leq C R_T^{(1+\frac{\beta}{2})\frac{1+\nu}{2+\nu}} \leq C R_T^{\frac{2+\beta}{3}},
    	\end{align}
    	where the first inequality follows from the definition of $\tilde{N}$ and Lemma \ref{lem-cauchy-riemann}. We use H$\ddot{o}$lder's inequality in the second line, the arguments in \eqref{10-3} and \eqref{10-4} in the third line, and \eqref{lem4-1} in the last inequality.
    	
    	Similarly, one obtains
    	\begin{align}\label{11-4}
    		& | \int_{\Omega} \pl_{y_j} \tilde{N}(x,y) \rho \pl_{y_j}\Phi \mbox{d}y| \leq  C R_T^{\frac{2+\beta}{3}},
    	\end{align}
    	where we used Lemma \ref{lem2}.
    	
    	On the other hand, Sobolev's inequality leads to
    	\begin{equation}\label{11-5}
    		\int_{\Omega} \frac{\rho |u|^2(y)}{|x-y|} \mbox{d}y \leq R_T( \int_{\Omega} |x-y|^{-\frac{3}{2}} \mbox{d}y )^{\frac{2}{3}}( \int_{\Omega} |u|^6\mbox{d} y)^{\frac{1}{3}} \leq CR_T\|u\|^2_{H^1},
    	\end{equation}
    	which, together with \eqref{4-1} for $\|u\|_{H^1}$ and Lemma \ref{lem1}, gives
    	\begin{equation}\label{11-6}
    		\int_{0}^{T} \max_{x \in \bar{\Omega}}\int_{\Omega} \frac{\rho |u|^2(y)}{|x-y|} \mbox{d}y \mbox{d}t \leq C R_T. 
    	\end{equation}
    	
    	Finally, integrating \eqref{11-2} with respect to $t$ and using \eqref{F-boundaryestimate-2} for the boundary term, Lemma \ref{lem10}, \eqref{11-3}, \eqref{11-4}, and \eqref{11-6}, we obtain
    	\begin{equation}\label{11-7}
    		R_T^{\beta} \leq C(\epsilon) R_T^{\max\{1+\frac{\beta}{4}+3\epsilon,\frac{2+\beta}{3},1+\epsilon\}}.
    	\end{equation}
    	
    	Since $\beta> \frac{4}{3}$, this in particular implies by choosing $\epsilon>0$ sufficiently small
    	\begin{equation}\label{11-8}
    		\sup_{0 \leq t \leq T} \|\rho\|_{L^{\infty}} \leq C,
    	\end{equation}
    	which, together with \eqref{6-9}, \eqref{4-2}, \eqref{5-5}, and Gronwall's inequality, completes the proof of Lemma \ref{lem11}.

    \end{proof}

    \section{A Priori Estimates (II): Lower and Higher Order Estimates} \label{section4}
    
    Having established the upper bound for the density, we turn to the lower and higher order estimates. The arguments are similar to those in \cite{HL2016}, but the boundary terms require additional care. We therefore focus on these terms and draw on ideas from \cite{HL2016,CL2023}. Following \cite{CL2023}, we first establish an estimate of Poincar\'{e} type to handle the boundary conditions. Its proof relies on the slip condition $(u \cdot n)|_{\pl \Omega}=0$.

    \begin{lem}\label{lem12}
    	For $p \geq 1$, there exist positive constants $C_1(p,\Omega)$ and $C_2(\Omega)$ such that
    	\begin{equation}\label{lem12-1}
    		\|\dot{u}\|_{L^p} \leq C_1 ( \|\nabla \dot{u}\|_{L^2} + \|\nabla u\|_{L^2}^2 ),
    	\end{equation}
    	\begin{equation}\label{lem12-2}
    		\|\tilde{u}\|_{L^p} \leq C_1 ( \|\nabla \dot{u}\|_{L^2} + \|\nabla u\|_{L^2}^2 +1),
    	\end{equation}
    	and
    	\begin{equation}\label{lem12-3}
    		\|\nabla \dot{u}\|_{L^2} \leq C_2( \|\mbox{div}\dot{u}\|_{L^2} + \|\mbox{curl}\dot{u}\|_{L^2} + \|\nabla u\|_{L^4}^2 ),
    	\end{equation}
    	\begin{equation}\label{lem12-4}
    		\|\nabla \tilde{u}\|_{L^2} \leq C_2( \|\mbox{div}\dot{u}\|_{L^2} + \|\mbox{curl}\dot{u}\|_{L^2} + \|\nabla u\|_{L^4}^2 +1).
    	\end{equation}
    \end{lem}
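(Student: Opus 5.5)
The key observation is that $\dot u$ is not tangential on $\partial\Omega$, but it is tangential up to a quadratic correction. Differentiating $u\cdot n=0$ along particle paths, and using that $n$ is time independent and $u$ is tangent, gives on $\partial\Omega$
\[
\dot u\cdot n=\frac{D}{Dt}(u\cdot n)-u\cdot\big((u\cdot\nabla)n\big)=-u\cdot\big((u\cdot\nabla)n\big).
\]
I would therefore split $\dot u$ into a tangential part plus a quadratic correction. Set $v:=\dot u+(u\cdot\nabla n)\cdot u$, using the fixed smooth extension of $n$. Then $v\cdot n=0$ on $\partial\Omega$ (up to the choice of sign convention in the extension), and $v-\dot u$ is pointwise bounded by $C|u|^2$.

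\textbf{Proof of \eqref{lem12-1}.} Apply Lemma \ref{lem-poincare} and Lemma \ref{lem-poincare2} to the tangential field $v$. Together with Sobolev embedding this gives $\|v\|_{L^p}\le C\|\nabla v\|_{L^2}$. Since $|\nabla v-\nabla\dot u|\le C(|u||\nabla u|+|u|^2)$, we get
\[
\|v\|_{L^p}\le C\|\nabla\dot u\|_{L^2}+C\|u\|_{L^4}\|\nabla u\|_{L^4}+C\|u\|_{L^4}^2 .
\]
Writing $\dot u=v-(u\cdot\nabla n)\cdot u$ and using $\|u\|_{L^{2p}}\le C\|\nabla u\|_{L^2}$ (Lemma \ref{lem-poincare}, $u\cdot n=0$) then yields \eqref{lem12-1}. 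The only non-routine point is the term $\|u\|_{L^4}\|\nabla u\|_{L^4}$, which is not controlled by $\|\nabla u\|_{L^2}^2$.

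\textbf{Avoiding that term (main obstacle).} I expect this to be the hardest step. Following \cite{CL2023}, I would avoid differentiating the correction. Decompose $\dot u=v_1+v_2$, where $v_2=\nabla\zeta$ and $\zeta$ solves the Neumann problem $\Delta\zeta=0$, $\partial_n\zeta=\dot u\cdot n=-u\cdot\nabla n\cdot u$. This datum is compatible because $\int_{\partial\Omega}\dot u\cdot n\,ds=\int_\Omega \mathrm{div}\dot u\,dx$; if that integral is nonzero, adjust $\zeta$ by a smooth fixed field carrying the mean. Neumann $L^p$ theory and the trace theorem (Lemma \ref{theorem-trace}) then bound $\|\nabla\zeta\|_{L^p}$ by a trace norm of $|u|^2$. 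That norm is controlled via $\|\,|u|^2\|_{W^{1,q}}$ with $q<2$ close to $2$, which needs only $\|u\|_{L^r}\|\nabla u\|_{L^2}\le C\|\nabla u\|_{L^2}^2$. The field $v_1$ is tangential, so Lemma \ref{lem-poincare2} and Sobolev embedding give $\|v_1\|_{L^p}\le C\|\nabla v_1\|_{L^2}\le C(\|\nabla\dot u\|_{L^2}+\|\nabla v_2\|_{L^2})$. Here $\|\nabla v_2\|_{L^2}$ is bounded again by boundary data in $H^{1/2}$, i.e. by $\|\,|u|^2\|_{H^1}$. This brings back $\|u\nabla u\|_{L^2}$, which I would accept in the form $\|\nabla u\|_{L^4}^2$ for \eqref{lem12-3}. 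For \eqref{lem12-1} I would instead measure $v_2$ only in $W^{1,q}$ with $q<2$, and use $W^{1,q}\hookrightarrow L^p$ for the specified range of $p$.

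\textbf{Proof of \eqref{lem12-3}.} Apply Lemma \ref{lem-div-curl} with $s=1$ to $\dot u$:
\[
\|\nabla\dot u\|_{L^2}\le C\big(\|\mathrm{div}\,\dot u\|_{L^2}+\|\mathrm{curl}\,\dot u\|_{L^2}+|\dot u\cdot n|_{H^{1/2}(\partial\Omega)}+\|\dot u\|_{L^2}\big).
\]
Lemma \ref{theorem-trace} bounds the boundary term by $C\|\,|u|^2\|_{H^1}\le C(\|u\|_{L^4}^2+\|u\nabla u\|_{L^2})\le C\|\nabla u\|_{L^4}^2$; the last step uses $\|u\|_{L^\infty}\le C\|\nabla u\|_{L^4}$, from $W^{1,4}$ embedding and Poincaré for tangential fields. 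For $\|\dot u\|_{L^2}$, use \eqref{lem12-1} with $p=2$ and absorb via a compactness/interpolation argument, or equivalently run the split above at the $L^2$ level.

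\textbf{Proofs of \eqref{lem12-2} and \eqref{lem12-4}.} These follow by adding $\nabla\Phi$, since $\tilde u=\dot u-\nabla\Phi$. Elliptic regularity for $\Delta\Phi=\rho-b$ with Neumann data, together with Lemma \ref{lem11} ($\rho$ bounded) and $b\in L^\infty$, gives $\|\nabla\Phi\|_{W^{1,p}}\le C$. This accounts for the $+1$ in both estimates.
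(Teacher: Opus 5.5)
Your ingredients are right: a quadratic normal correction that makes $\dot u$ tangential, and the fact that $|u||\nabla u|$ is controlled by $\|\nabla u\|_{L^2}^2$ only in $L^q$ with $q<2$. But two steps fail as written, and the Neumann decomposition is an unnecessary detour.

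\textbf{The $v_1$ step in \eqref{lem12-1}.} You still estimate $\|v_1\|_{L^p}\le C\|\nabla v_1\|_{L^2}\le C(\|\nabla\dot u\|_{L^2}+\|\nabla v_2\|_{L^2})$. But $\|\nabla v_2\|_{L^2}=\|\nabla^2\zeta\|_{L^2}$ requires $H^{1/2}$ Neumann data, i.e.\ $\|u\nabla u\|_{L^2}$, which is exactly the term you set out to avoid. Measuring only $v_2$ in $W^{1,q}$ does not repair this; $v_1$ must also be estimated in $W^{1,q}$, via the tangential Poincar\'e inequality in $L^q$. Once you do that, the harmonic field is superfluous. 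Apply the $L^q$ tangential Poincar\'e inequality directly to $v=\dot u+\big(u\cdot(u\cdot\nabla)n\big)n$; you need the factor $n$, since as written you add a scalar to a vector. Then use $\|v-\dot u\|_{W^{1,q}}\le C(\|u\|_{L^{2q}}^2+\|u\|_{L^{2q/(2-q)}}\|\nabla u\|_{L^2})\le C\|\nabla u\|_{L^2}^2$. This is precisely the paper's proof, with $q=3/2$ and the correction $(u\cdot n^{\bot})(u\cdot\nabla)n^{\bot}$. The paper also never lets $q\to2$. The weak $W^{1,3/2}$ measurement is used only for the zeroth-order quantity, $\|\dot u\|_{L^2}\le C\|\dot u\|_{W^{1,3/2}}\le C(\|\nabla\dot u\|_{L^2}+\|\nabla u\|_{L^2}^2)$. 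Every $p$ then follows from $H^1\hookrightarrow L^p$, with $\|\nabla\dot u\|_{L^2}$ supplying the integrability.

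\textbf{The absorption step in \eqref{lem12-3}.} Absorbing the $\|\dot u\|_{L^2}$ term of Lemma \ref{lem-div-curl} by means of \eqref{lem12-1} with $p=2$ is circular. It puts $C\,C_1\|\nabla\dot u\|_{L^2}$ back on the right-hand side with no small factor. No interpolation removes it, because bounding a lower-order norm by $\mbox{div}$ and $\mbox{curl}$ alone needs the tangential condition and simple connectivity. The compactness option you mention is the right one, but it must be applied to a tangential field, and it is already available as Lemma \ref{lem-important}. Applied to $v$, it gives $\|\nabla\dot u\|_{L^2}\le C(\|\mbox{div}\,\dot u\|_{L^2}+\|\mbox{curl}\,\dot u\|_{L^2}+\|\,|u||\nabla u|+|u|^2\|_{L^2})$. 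Then $\|\,|u||\nabla u|\|_{L^2}\le\|u\|_{L^4}\|\nabla u\|_{L^4}\le C\|\nabla u\|_{L^4}^2$ closes it. Your split at the $L^2$ level also works if you use Lemma \ref{lem-important} for $v_1$, where $\mbox{curl}\,v_1=\mbox{curl}\,\dot u$ exactly. Again, though, the harmonic piece buys nothing over the explicit correction.

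Your treatment of \eqref{lem12-2} and \eqref{lem12-4} through the Neumann bound on $\nabla\Phi$ is fine.
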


    \begin{proof}
    	
    	First, by the definition $n^{\bot}=(n_2,-n_1)$ and the boundary condition $(u\cdot n)|_{\pl \Omega}=0$, we have
    	\begin{equation}\label{12-1}
    		u= ( u \cdot n^{\bot} )n^{\bot} \quad \mbox{on} \quad \pl \Omega,
    	\end{equation}
    	Thus, on the boundary $\pl \Omega$,
    	\begin{equation}\label{12-2}
    		\dot{u} \cdot n = (u\cdot \nabla )u \cdot n= - (u \cdot \nabla )n \cdot u = -(u \cdot n^{\bot})( u \cdot \nabla )n \cdot n^{\bot}= (u \cdot n^{\bot})(u \cdot \nabla)n^{\bot} \cdot n,
    	\end{equation}
    	which gives
    	\begin{equation}\label{12-3}
    		(\dot{u} - (u \cdot n^{\bot})(u \cdot \nabla)n^{\bot})\cdot n=0.
    	\end{equation}
    	
    	Applying \eqref{12-3} and Poincar\'{e}'s inequality (Lemma \ref{lem-poincare}), we obtain
    	\begin{equation}\label{12-4}
    		\|\dot{u} - (u \cdot n^{\bot})(u \cdot \nabla)n^{\bot}\|_{L^{\frac{3}{2}}} \leq C\|\nabla(\dot{u} - (u \cdot n^{\bot})(u \cdot \nabla)n^{\bot})\|_{L^{\frac{3}{2}}},
    	\end{equation}
    	and hence
    	\begin{equation}\label{12-5}
    		\|\dot{u} \|_{L^{\frac{3}{2}}} \leq C( \|\nabla \dot{u}\|_{L^{\frac{3}{2}}} + \|\nabla u\|_{L^2}^2 ),
    	\end{equation}
    	where we used
    	\begin{equation}\label{12-6}
    		\|(u \cdot n^{\bot})(u \cdot \nabla)n^{\bot}\|_{L^{\frac{3}{2}}} \leq C \|u\|_{L^3}^2\leq C(\|u\|_{L^2}^2 + \|\nabla u\|_{L^2}^2) \leq C \|\nabla u\|_{L^2}^2,
    	\end{equation}
    	by Lemma \ref{lem-poincare} and Lemma \ref{lem-poincare2}. Similarly, the same argument and H$\ddot{o}$lder's inequality give $\|\nabla((u \cdot n^{\bot})(u \cdot \nabla)n^{\bot})\|_{L^{\frac{3}{2}}} \leq C\|\nabla u\|_{L^2}\|u\|_{L^6}\leq C\|\nabla u\|_{L^2}^2$.
    	
    	Combining this with the Sobolev embedding theorem, we obtain, for $p \geq 1$,
    	\begin{align}\label{12-7}
    		\|\dot{u}\|_{L^p} &\leq C( \|\dot{u}\|_{L^2} + \|\nabla \dot{u}\|_{L^2} )\nonumber\\[2mm]
    		& \leq C ( \|\dot{u} \|_{L^{\frac{3}{2}}} + \|\nabla \dot{u}\|_{L^{\frac{3}{2}}}) + C\|\nabla \dot{u}\|_{L^2} \leq C( \|\nabla \dot{u} \|_{L^2} + \|\nabla u\|_{L^2}^2),
    	\end{align}
    	which completes the proof of \eqref{lem12-1}.
    	
    	Similarly, Lemma \ref{lem2} gives
    	\begin{align}\label{12-8}
    		\|\tilde{u}\|_{L^p} &\leq C( \|\dot{u}\|_{L^p} + \|\nabla \Phi\|_{L^p} )\nonumber\\[2mm]
    		& \leq C( \|\nabla \dot{u} \|_{L^2} + \|\nabla u\|_{L^2}^2+1),
    	\end{align}
    	which is \eqref{lem12-2}.
    	
    	Finally, \eqref{lem12-3} follows directly from \eqref{12-3} and Lemma \ref{lem-div-curl}. The proof is completed by combining \eqref{lem12-3} with
       	\begin{equation}\label{12-9}
    		\|\nabla \tilde{u}\|_{L^2} \leq C( \|\nabla \dot{u}\|_{L^2} + \|\Delta\Phi\|_{L^2} )
    	\end{equation}
    	where $\|\Delta \Phi\|_{L^2} \leq C(\|\rho\|_{L^2}+1)\leq C$ follows from the estimates for $\rho$ and $b(x)$.

    \end{proof}

    We now derive the lower order a priori estimates.
    
    \begin{lem}\label{lem13}
    	There is a positive constant $C$ depending only on  $\Omega,T,\mu,\beta, \gamma$, $\|b(x)\|_{L^{\infty}}$, $\|\rho_0\|_{L^{\infty}}$, and $\|u_0\|_{H^1}$ such that
    	\begin{equation}\label{lem13-1}
    		\sup_{0 \leq t \leq T} \sigma \int_{\Omega} \rho |\tilde{u}|^2 \mbox{d}x + \int_{0}^{T} \sigma \|\nabla \tilde{u}\|^2_{L^2} \mbox{d}t \leq C,
    	\end{equation}
    	with $\sigma(t):= \min \{ 1,t\}$. Moreover, for any $p\in [1,\infty)$, there is a positive constant $C(p)$ depending only on  $p,\Omega,T,\mu,\beta, \gamma, \|\rho_0\|_{L^{\infty}}$, and $\|u_0\|_{H^1}$ such that
    	\begin{equation}\label{lem13-2}
    		\sup_{0 \leq t \leq T} \sigma \|\nabla u\|_{L^p}^2 \leq C(p).
    	\end{equation}
    \end{lem}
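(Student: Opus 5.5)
We follow the Hoff-type weighted estimate in the form of \cite{HL2016,CL2023}, now written for the modified material derivative $\tilde u=\dot u-\nabla\Phi$. Lemma \ref{lem11} already gives $\rho\le C$, $\sup_t\|u\|_{H^1}\le C$ and $\int_0^T(\|F\|_{H^1}^2+\|\omega\|_{H^1}^2+\|\sqrt\rho\dot u\|_{L^2}^2)dt\le C$. Consequently Lemma \ref{lem5} reduces, for every $p>2$, to
\[
\|\nabla u\|_{L^p}\le C(p)\big(\|\sqrt\rho\tilde u\|_{L^2}^{1-2/p}+1\big).
\]
Likewise Lemma \ref{lem2} gives $\|\nabla\Phi\|_{W^{1,p}}\le C(p)$.

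\textbf{Step 1: the momentum equation for $\tilde u$.} We write the momentum equation as $\rho\tilde u=\nabla F+\mu\nabla^\perp\omega$ and apply the operator $\sigma\tilde u^j[\partial_t+\mathrm{div}(u\,\cdot)]$ to its $j$-th component. We then integrate over $\Omega$ and use $\rho_t+\mathrm{div}(\rho u)=0$. On the left this produces
\[
\tfrac12\tfrac{d}{dt}\int\sigma\rho|\tilde u|^2-\tfrac12\sigma'\int\rho|\tilde u|^2+\int\sigma\rho\,\tilde u\cdot\partial_t\nabla\Phi .
\]
The correction involving $\nabla\Phi_t$ is controlled by the Neumann problem $\Delta\Phi_t=-\mathrm{div}(\rho u)$ with $\partial_n\Phi_t=0$, which gives $\|\nabla\Phi_t\|_{L^p}\le C\|\rho u\|_{L^p}\le C$. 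On the right, the terms $\int\sigma\tilde u^j(\partial_t\partial_jF+\mathrm{div}(u\partial_jF))$ and the analogous $\omega$-terms are integrated by parts exactly as in \cite{HL2016}. This produces the good terms
\[
-\int\sigma(2\mu+\lambda)(\mathrm{div}\tilde u)^2\quad\text{and}\quad -\mu\int\sigma(\mathrm{curl}\,\tilde u)^2 .
\]
To obtain them we use $F_t+u\cdot\nabla F=-(2\mu+\lambda)\mathrm{div}\dot u+\ldots$, expressed through $\mathrm{div}\tilde u$ plus $\Delta\Phi=\rho-b$. We also use the identities \eqref{6-1}, \eqref{6-2}. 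The remainders are cubic terms of the type $\int\sigma|\nabla u|^2|\nabla\tilde u|$, $\int\sigma|F||\nabla u||\nabla\tilde u|$, and lower-order Poisson terms. By H\"older, Young and the bound above, they are bounded by
\[
\varepsilon\sigma\|\nabla\tilde u\|_{L^2}^2+C\sigma\|\nabla u\|_{L^4}^4+C\sigma(\|F\|_{H^1}^2+\|\omega\|_{H^1}^2+1).
\]

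\textbf{Step 2: boundary terms (main obstacle).} Since $\tilde u\cdot n\neq0$, the integrations by parts leave boundary integrals $\int_{\partial\Omega}\sigma F_t\,\tilde u\cdot n$, $\int_{\partial\Omega}\sigma\omega_t\,\tilde u\cdot n^\perp$ and their convective analogues. We handle them as in \cite{CL2023}. By \eqref{12-2} and $\nabla\Phi\cdot n=0$,
\[
\tilde u\cdot n=-u\cdot\nabla n\cdot u \quad\text{on }\partial\Omega,
\]
which is quadratic in $u$. The time derivative is moved off $F$ by writing $\int_{\partial\Omega}F_t(u\cdot\nabla n\cdot u)=\frac{d}{dt}\int_{\partial\Omega}F(u\cdot\nabla n\cdot u)-\ldots$. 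The remaining terms, which contain $u_t=\dot u-u\cdot\nabla u$, are estimated by Proposition \ref{lem-boundary-key} and the trace Lemma \ref{theorem-trace}; this gives bounds like $C\|\nabla\tilde u\|_{L^2}(\|\nabla u\|_{L^4}^2+\|F\|_{H^1}+1)$. On the other hand $\omega=-Au\cdot n^\perp$ on the boundary, so $\omega_t$ turns into a tangential time derivative. As in \eqref{6-8-1}, that term becomes an exact derivative plus terms controlled by $\|\nabla\tilde u\|_{L^2}\|u\|_{H^1}^2$. The boundary time-derivative terms are then absorbed via $\sigma|\int_{\partial\Omega}F\,u\cdot\nabla n\cdot u|\le C\sigma\|F\|_{H^1}\|u\|_{H^1}^2\le\frac14\sigma\|\sqrt\rho\tilde u\|_{L^2}^2+C$.

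\textbf{Step 3: closing the estimate.} By Lemma \ref{lem12}, $\|\nabla\tilde u\|_{L^2}$ is bounded by $\|\mathrm{div}\tilde u\|_{L^2}+\|\mathrm{curl}\,\tilde u\|_{L^2}+\|\nabla u\|_{L^4}^2+1$, up to the constant $C_2$. Since $\|\nabla u\|_{L^4}^4\le C(\|\sqrt\rho\tilde u\|_{L^2}^2+1)$ by the $p=4$ bound, collecting Steps 1 and 2 yields
\[
\tfrac{d}{dt}\Big(\sigma\|\sqrt\rho\tilde u\|_{L^2}^2-\text{bdry}\Big)+\sigma\|\nabla\tilde u\|_{L^2}^2\le C(1+\|\sqrt\rho\tilde u\|_{L^2}^2)(\sigma\|\sqrt\rho\tilde u\|_{L^2}^2+1)+C\|\sqrt\rho\tilde u\|_{L^2}^2 .
\]
Here $\sigma'\le1$ and $\int_0^T\|\sqrt\rho\tilde u\|_{L^2}^2dt\le C$ by Lemma \ref{lem11} and the $L^p$ bound on $\nabla\Phi$. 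Gronwall's inequality then gives \eqref{lem13-1}. Finally, \eqref{lem13-2} follows by multiplying the $\|\nabla u\|_{L^p}$ bound by $\sigma$:
\[
\sigma\|\nabla u\|_{L^p}^2\le C(p)\big((\sigma\|\sqrt\rho\tilde u\|_{L^2}^2)^{1-2/p}+1\big)\le C(p).
\]
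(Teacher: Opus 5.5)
Your proposal follows essentially the same route as the paper. Both arguments apply $\tilde u^j[\partial_t+\mathrm{div}(u\,\cdot)]$ to $\rho\tilde u=\nabla F+\mu\nabla^\perp\omega$, use $\tilde u\cdot n=-u\cdot\nabla n\cdot u$ on $\partial\Omega$ to move the time derivative off $F$, treat the $\omega$ boundary term through the Navier-slip condition, and close with Lemma \ref{lem12}, $\|\nabla u\|_{L^4}^4\le C(\|\sqrt\rho\tilde u\|_{L^2}^2+1)$ and Gronwall after weighting by $\sigma$. Two small corrections: no $\nabla\Phi_t$ term appears on the left, since the operator applied to $\rho\tilde u^j$ gives exactly $\rho\dot{\tilde u}^j$. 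Also, once $\omega_t$ is combined with $u\cdot\nabla\omega$ into $\dot\omega=-\frac{D}{Dt}(Au\cdot n^\perp)$, the $\omega$ boundary term gives the nonpositive term $-\mu\int_{\partial\Omega}A(\tilde u\cdot n^\perp)^2\,ds$ plus lower-order remainders, not an exact time derivative, and this only helps.
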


    \begin{proof}
    	
    	First, we apply $\tilde{u}_j[\frac{\pl}{\pl t}+ \mbox{div}(u \cdot)]$ to $\eqref{origin}^j_2$, sum over $j$, and integrate over $\Omega$. Integrating by parts and using the boundary conditions \eqref{boundary1} and \eqref{boundary2}, we obtain
    	\begin{align}\label{13-1}
    		\frac{1}{2} \frac{d}{dt} \int_{\Omega} \rho |\tilde{u}|^2 \mbox{d}x & = \int_{\Omega} (\tilde{u} \cdot \nabla F_t + \tilde{u}_j \mbox{div}(\pl_jF u)) \mbox{d}x\nonumber\\[2mm]
    		& + \mu \int_{\Omega} ( \tilde{u} \cdot \nabla^{\bot}\omega_t + \tilde{u}_j \mbox{div}( u (\nabla^{\bot}\omega)_j ) ) \mbox{d}x=:\tilde{J}_1+\tilde{J}_2,
    	\end{align}
    	where we have used $\eqref{origin}^2$, namely $\pl_t \rho + \mbox{div}( \rho u)=0$.
    	
    	We estimate $\tilde{J}_1$ and $\tilde{J}_2$ separately. First, integrating by parts and using the boundary conditions $(u \cdot n)|_{\pl \Omega}=0$ and $(\nabla \Phi\cdot n)|_{\pl \Omega}=0$, we obtain
    	\begin{align}\label{13-2}
    		\tilde{J}_1 &= \int_{\Omega} (\tilde{u} \cdot \nabla F_t + \tilde{u}_j \mbox{div}(\pl_jF u)) \mbox{d}x = \int_{\Omega}( \tilde{u} \cdot \nabla F_t + \tilde{u}_j ( u \cdot \nabla \pl_j F) + \tilde{u} \cdot \nabla F \mbox{div}u )\mbox{d}x\nonumber\\[2mm]
    		& = \int_{\Omega}( \tilde{u} \cdot \nabla F_t + \tilde{u} \cdot \nabla ( u \cdot \nabla F) + \tilde{u} \cdot \nabla F \mbox{div}u - \tilde{u} \cdot \nabla u \cdot \nabla F )\mbox{d}x\nonumber\\[2mm]
    		& = \int_{\partial\Omega}(F_t + u \cdot \nabla F )(u \cdot \nabla u \cdot n) \mbox{d}s - \int_{\Omega} (F_t + u \cdot \nabla F )\mbox{div}\tilde{u} \mbox{d}x + \int_{\Omega}( \tilde{u} \cdot \nabla F \mbox{div}u - \tilde{u} \cdot \nabla u \cdot \nabla F )\mbox{d}x\nonumber\\[2mm]
    		& = \int_{\partial\Omega}(F_t + u \cdot \nabla F )(u \cdot \nabla u \cdot n) \mbox{d}s + \int_{\Omega}( \tilde{u} \cdot \nabla F \mbox{div}u - \tilde{u} \cdot \nabla u \cdot \nabla F )\mbox{d}x- \int_{\Omega}(2\mu + \lambda(\rho))|\mbox{div}\tilde{u}|^2\mbox{d}x\nonumber\\[2mm]
    		&+ \int_{\Omega}(2\mu + \lambda(\rho))( \pl_iu_j\pl_ju_i\mbox{div}\tilde{u} + \Delta\Phi\mbox{div}\tilde{u} )\mbox{d}x + \int_{\Omega}( \lambda'(\rho)\rho |\mbox{div}u|^2- \gamma P \mbox{div}u )\mbox{div}\tilde{u}\mbox{d}x\nonumber\\[2mm]
    		&\leq \int_{\partial\Omega}(F_t + u \cdot \nabla F )(u \cdot \nabla u \cdot n) \mbox{d}s - \mu \int_{\Omega}|\mbox{div}\tilde{u}|^2\mbox{d}x + C\int_{\Omega}|\tilde{u}||\nabla F ||\nabla u|\mbox{d}x + C\|\nabla u\|_{L^4}^4+C,
    	\end{align}
    	where we have used H$\ddot{o}$lder's inequality and Lemma \ref{lem2}, which gives $\|\Delta\Phi\|_{L^p}\leq C$ and $\|\rho\|_{L^p}\leq C$ for $p \geq 1$.
    	
    	In deriving \eqref{13-2}, we use the following identity, obtained by direct calculation:
    	\begin{align}\label{13-3}
    		F_t+ u \cdot \nabla F &= \lambda'(\rho) \rho_t \mbox{div}u + (2\mu + \lambda(\rho)) \mbox{div}u_t + u \cdot \nabla((2\mu +\lambda(\rho)) \mbox{div}u )+ P_t + u \cdot \nabla P\nonumber\\[2mm]
    		&= (2\mu +\lambda(\rho))( \mbox{div}u_t + u \cdot \nabla \mbox{div}u) - \lambda'(\rho) \rho|\mbox{div}u|^2+ \gamma P \mbox{div}u \nonumber\\[2mm]
    		& = (2\mu + \lambda(\rho))\mbox{div}\tilde{u} - (2\mu + \lambda(\rho))\pl_iu_j\pl_ju_i+ (2\mu + \lambda(\rho))\Delta \Phi- \lambda'(\rho) \rho|\mbox{div}u|^2+ \gamma P \mbox{div}u,
    	\end{align}
    	where
    	\begin{equation}\label{13-4}
    		\mbox{div}\tilde{u} = \mbox{div}u_t + \pl_iu_j\pl_ju_i+ u \cdot \nabla \mbox{div}u - \Delta\Phi.
    	\end{equation}
    	
    	For the first term on the right-hand side of \eqref{13-1}, we have
    	\begin{align}\label{13-5}
    		\int_{\partial\Omega}F_t(u \cdot \nabla u \cdot n) \mbox{d}s& =- \frac{d}{dt}\int_{\partial\Omega}F(u \cdot \nabla n \cdot u) \mbox{d}s + \int_{\partial\Omega} (F(u_t \cdot \nabla n \cdot u)+F(u \cdot \nabla n \cdot u_t)) \mbox{d}s\nonumber\\[2mm]
    		& = - \frac{d}{dt}\int_{\partial\Omega}F(u \cdot \nabla n \cdot u) \mbox{d}s + \int_{\partial\Omega} ( F(\tilde{u} \cdot\nabla n \cdot u + u \cdot \nabla n \cdot \tilde{u}) \nonumber\\[2mm]
    		&- F((u\cdot \nabla u)\cdot\nabla  n \cdot u + u \cdot \nabla n \cdot(u \cdot \nabla u)) + F(\nabla \Phi \cdot \nabla n \cdot u + u \cdot \nabla n \cdot \nabla \Phi) ) \mbox{d}s.
    	\end{align}
    	
    	For the second term in the second line, we apply the trace theorem, Poincar\'{e}'s inequality (Lemma \ref{lem-poincare2}), Lemma \ref{lem12} for $\|\tilde{u}\|_{H^1}$, \eqref{5-6} for $\|F\|_{H^1}$, and Lemma \ref{lem11} to obtain
    	\begin{align}\label{13-6}
    	     &\int_{\partial\Omega} ( F(\tilde{u} \cdot\nabla n \cdot u + u \cdot \nabla n \cdot \tilde{u}) \mbox{d}s \leq C \|u\|_{H^1}\|\tilde{u}\|_{H^1}\|F\|_{H^1}\nonumber\\[2mm]
    	     & \leq C \|\nabla u\|_{L^2}(\|\nabla \tilde{u}\|_{L^2} + \|\nabla u\|_{L^2}^2 )( \|\sqrt{\rho} \tilde{u}\|_{L^2} + \|\nabla u\|_{L^2} )\nonumber\\[2mm]
    	     & \leq \epsilon \|\nabla \tilde{u}\|^2_{L^2} + C(\epsilon)( \|\sqrt{\rho}\tilde{u}\|_{L^2}^2+1 ),
    	\end{align}
    	where we used the estimates of $\|\rho\|_{L^{\infty}}$ and $\|\nabla \Phi\|_{L^p}$ for $p\geq 1$.
    	
    	For the first term in the third line, we use \eqref{12-3} to obtain
    	\begin{align}\label{13-7}
    		&|\int_{\partial\Omega} F(u\cdot \nabla u)\cdot\nabla  n \cdot u  \mbox{d}s|= |\int_{\partial\Omega} F(u\cdot n^{\bot}) n^{\bot} \cdot\nabla u_i \pl_i n_j u_j \mbox{d}s|= | \int_{\Omega}\nabla^{\bot} \cdot ( F(u\cdot n^{\bot}) \nabla u_i \pl_i n_j u_j) \mbox{d}x|\nonumber\\[2mm]
    		& = | \int_{\Omega}\nabla u_i \cdot \nabla^{\bot}( F(u\cdot n^{\bot}) \pl_i n_j u_j) \mbox{d}x|\leq C \int_{\Omega} |\nabla u|( |F||u|^2 + F|u||\nabla u|+ |u|^2|\nabla F| )\nonumber\\[2mm]
    		& \leq C\|\nabla u\|_{L^4}( \|F\|_{L^4}\|u\|_{L^4}^2 + \|F\|_{L^4}\|u\|_{L^4}\|\nabla u\|_{L^4}+ \|\nabla F\|_{L^2}\|u\|_{L^8}^2 )\nonumber\\[2mm]
    		&\leq C (\|\nabla u\|_{L^4}^4 + \|F\|_{H^1}^2+1)\leq C( \|\nabla u\|_{L^4}^4+ \|\sqrt{\rho}\tilde{u}\|_{L^2}^2+1 ).
    	\end{align}

    	Similarly, for the last term we have
    	\begin{align}\label{13-8}
    		& | \int_{\partial\Omega} F(\nabla \Phi \cdot \nabla n \cdot u + u \cdot \nabla n \cdot \nabla \Phi)  \mbox{d}s| \leq C( \|\nabla u\|_{L^4}^4+ \|\sqrt{\rho}\tilde{u}\|_{L^2}^2+1 ),
    	\end{align}
    	where we used Lemma \ref{lem2} for $\|\nabla \Phi\|_{L^p}$ and $\|\Delta\Phi\|_{L^p}$ for $p\geq 1$.
    	
    	Then, we obtain
    	\begin{align}\label{13-9}
    		\int_{\partial\Omega}F_t(u \cdot \nabla u \cdot n) \mbox{d}s& \leq - \frac{d}{dt} \int_{\partial\Omega} F(u \cdot \nabla n \cdot u)\mbox{d}s + \epsilon\|\nabla \tilde{u}\|_{L^2}^2\nonumber\\[2mm]
    		&+ C(\epsilon)( \|\nabla u\|_{L^4}^4+ \|\sqrt{\rho}\tilde{u}\|_{L^2}^2+1 ).
    	\end{align}
    	
    	Returning to \eqref{13-2} for $\tilde{J}_1$, we estimate the remaining second term in the last line as follows:
    	\begin{align}\label{13-10}
    		&\int_{\partial\Omega} u \cdot \nabla F (u \cdot \nabla u \cdot n)\mbox{d}s = \int_{\partial\Omega} (u \cdot n^{\bot})\cdot \nabla F (\tilde{u} \cdot n)\mbox{d}s = \int_{\Omega} \nabla^{\bot}\cdot ((u \cdot n^{\bot}) \nabla F (\tilde{u} \cdot n))\mbox{d}x \nonumber\\[2mm]
    		&= \int_{\Omega} \nabla F \cdot \nabla^{\bot}((u \cdot n^{\bot}) (\tilde{u} \cdot n))\mbox{d}x \leq C \int_{\Omega}|\nabla F|(|\tilde{u}||\nabla u|+|\nabla \tilde{u}||u|)\mbox{d}x \nonumber\\[2mm]
    		&\leq C \|\nabla F\|_{L^2}( \|\tilde{u}\|_{L^4}\|\nabla u\|_{L^4}+ \|\nabla \tilde{u}\|_{L^2}\|\nabla u\|_{L^4} ) \leq \epsilon\|\nabla \tilde{u}\|_{L^2}^2 + C(\epsilon)(1+ \|\sqrt{\rho}\tilde{u}\|_{L^2}^2)(\|\nabla u\|_{L^4}^4+1).
    	\end{align}

    	Combining the preceding estimates, we obtain
    	\begin{align}\label{13-11}
    		\tilde{J}_1& \leq -\mu \int_{\Omega} | \mbox{div}\tilde{u}|^2\mbox{d}x - \frac{d}{dt} \int_{\partial\Omega} F(u \cdot \nabla n \cdot u)\mbox{d}s \nonumber\\[2mm]
    		& + C \epsilon\|\nabla \tilde{u}\|_{L^2}^2 + C(\epsilon)(1+ \|\sqrt{\rho}\tilde{u}\|_{L^2}^2)(\|\nabla u\|_{L^4}^4+1).
    	\end{align}

    	Next, we check that
    	\begin{align}\label{13-12}
    		\tilde{J}_2& = \mu \int_{\Omega} ( \tilde{u} \cdot \nabla^{\bot}\omega_t + \tilde{u}_j \mbox{div}( u (\nabla^{\bot}\omega)_j ) ) \mbox{d}x\nonumber\\[2mm]
    		& =  \mu \int_{\Omega} ( \tilde{u} \cdot \nabla^{\bot}\omega_t + \tilde{u} \cdot \nabla^{\bot}(\mbox{div}(u \omega))- \tilde{u}_j \pl_k( (\nabla^{\bot}u_k)_j \omega) ) \mbox{d}x\nonumber\\[2mm]
    		&= - \mu \int_{\Omega} |\mbox{curl} \tilde{u}|^2 \mbox{d}x + \mu \int_{\Omega} \mbox{curl}\tilde{u} \mbox{curl}(u \cdot \nabla u- \nabla \Phi) \mbox{d}x- \mu \int_{\Omega} \mbox{curl}\tilde{u} \mbox{div}(u \omega)\mbox{d}x \nonumber\\[2mm]
    		&- \mu \int_{\Omega} \tilde{u}_j \pl_k( (\nabla^{\bot}u_k)_j \omega)  \mbox{d}x + \mu \int_{\partial\Omega} (\tilde{u} \cdot n^{\bot}) ( \omega_t + \mbox{div}u \omega + u \cdot \nabla \omega ) \mbox{d} s.
    	\end{align}
    	
    	For the boundary term, we use the boundary condition \eqref{boundary1} to obtain
    	\begin{align}\label{13-12-1}
    		& \mu \int_{\partial\Omega} (\tilde{u} \cdot n^{\bot}) ( \omega_t + \mbox{div}u \omega + u \cdot \nabla \omega ) \mbox{d} s \nonumber\\[2mm]
    		& = \mu \int_{\partial\Omega} ((\tilde{u} \cdot n^{\bot}) \dot{\omega} + \frac{1}{2 \mu +\lambda } (\tilde{u} \cdot n^{\bot})( F+ P )(-A u \cdot n^{\bot})) \mbox{d} s \nonumber\\[2mm]
    		&= \mu \int_{\partial\Omega} (-A(\tilde{u} \cdot n^{\bot})^2 - (\tilde{u} \cdot n^{\bot})(A\nabla  \Phi \cdot n^{\bot} + A u \cdot( u \cdot \nabla) n^{\bot}+ (u \cdot \nabla) A (u \cdot n^{\bot})) \nonumber\\[2mm]
    		& + \frac{1}{2 \mu +\lambda } (\tilde{u} \cdot n^{\bot})( F+ P )(-A u \cdot n^{\bot})) \mbox{d} s \nonumber\\[2mm]
    		& \leq  - \mu \int_{\partial\Omega} A(\tilde{u} \cdot n^{\bot})^2 \mbox{d} s  + C \int_{\partial\Omega} ((|F|+1) |\tilde{u}| |u| + (|u|^2+|\nabla \Phi|) |\tilde{u}| ) \mbox{d}s \nonumber\\[2mm]
    		& \leq - \mu \int_{\partial\Omega} A(\tilde{u} \cdot n^{\bot})^2 \mbox{d} s  + C ((\|F\|_{H^1}+1) \| \tilde{u}\|_{H^1} \|u\|_{H^1} + (\|u\|_{H^1}^2+\|\nabla \Phi\|_{H^1}) \|\tilde{u}\|_{H^1} ) \nonumber\\[2mm]
    		& \leq - \mu \int_{\partial\Omega} A(\tilde{u} \cdot n^{\bot})^2 \mbox{d} s  + \epsilon\|\nabla \tilde{u}\|^2_{L^2} + C(\epsilon)\|\nabla u\|_{L^4}^4 + C(\epsilon)\|\sqrt{\rho}\tilde{u}\|^2_{L^2} + C(\epsilon).
    	\end{align}

    	A direct calculation, using integration by parts and H$\ddot{o}$lder's inequality, gives
    	\begin{align}\label{13-13}
    	    &- \mu \int_{\Omega} |\mbox{curl} \tilde{u}|^2 \mbox{d}x + \mu \int_{\Omega} \mbox{curl}\tilde{u} \mbox{curl}(u \cdot \nabla u- \nabla \Phi) \mbox{d}x\nonumber\\[2mm]
    	    &- \mu \int_{\Omega} \mbox{curl}\tilde{u} \mbox{div}(u \omega)\mbox{d}x - \mu \int_{\Omega} \tilde{u}_j \pl_k( (\nabla^{\bot}u_k)_j \omega)  \mbox{d}x \nonumber\\[2mm]
    	    &= - \mu \int_{\Omega} |\mbox{curl} \tilde{u}|^2 \mbox{d}x + \mu \int_{\Omega} \mbox{curl}\tilde{u} (\nabla^{\bot}u)^T : \nabla u \mbox{d}x\nonumber\\[2mm]
    	    &- \mu \int_{\Omega} \mbox{curl}\tilde{u} \mbox{div}u \omega\mbox{d}x + \mu \int_{\Omega} \pl_k \tilde{u}_j  (\nabla^{\bot}u_k)_j \omega)  \mbox{d}x - \mu \int_{\partial\Omega} \tilde{u}_j (\nabla^{\bot} u_k)_j \omega n_k \mbox{d}s\nonumber\\[2mm]
    	    & \leq - \mu \int_{\Omega} |\mbox{curl} \tilde{u}|^2 \mbox{d}x + \epsilon\|\nabla \tilde{u}\|^2_{L^2} + C(\epsilon)\|\nabla u\|_{L^4}^4,
    	\end{align}
    	where the last inequality uses the following estimate for the boundary term:
    	\begin{align}\label{13-13-1}
    		&- \mu \int_{\partial\Omega} \tilde{u}_j (\nabla^{\bot} u_k)_j \omega n_k \mbox{d}s = - \mu \int_{\partial\Omega} \omega ((\tilde{u} \cdot n )((\nabla^{\bot} u_k n_k) \cdot n) + (\tilde{u} \cdot \tau )((\nabla^{\bot} u_k n_k) \cdot \tau) )\mbox{d}s \nonumber\\[2mm]
    		& = - \mu \int_{\partial\Omega} (\omega \tilde{u}_n (u \cdot \pl_{\tau} n ) + \omega \tilde{u}_{\tau}(\mbox{div}u - \pl_{\tau}(u \cdot n^{\bot})))\mbox{d}s \nonumber\\[2mm]
    		& \leq - \mu \int_{\partial\Omega} (-A (u \cdot n^{\bot}) \tilde{u}_n (u \cdot \pl_{\tau} n ) + A (u \cdot n^{\bot}) \tilde{u}_{\tau} \pl_{\tau}(u \cdot n^{\bot}))\mbox{d}s + \epsilon\|\nabla \tilde{u}\|^2_{L^2} + C(\epsilon)\|\nabla u\|_{L^4}^4 \nonumber\\[2mm]
    		& \leq - \mu \int_{\partial\Omega} A (u \cdot n^{\bot}) \tilde{u}_{\tau} \pl_{\tau}(u \cdot n^{\bot})\mbox{d}s + \epsilon\|\nabla \tilde{u}\|^2_{L^2} + C(\epsilon)\|\nabla u\|_{L^4}^4 \nonumber\\[2mm]
    		& \leq \epsilon\|\nabla \tilde{u}\|^2_{L^2} + C(\epsilon)\|\nabla u\|_{L^4}^4,
    	\end{align}
    	Here we use the trace theorem (Proposition \ref{lem-boundary-key}), which gives, for any $u$ satisfying the boundary condition $(u \cdot n)|_{\pl \Omega}=0$,
    	\begin{align}
    		\int_{\partial\Omega} |u \cdot \nabla f| \mbox{d} s \leq C \|f\|_{H^1}\|u\|_{H^1}.
    	\end{align}
    	
    	The second equality in \eqref{13-13-1} follows from the identities
    	\begin{align}
    		& (\nabla^{\bot} u_k n_k) \cdot n = - \pl_{\tau} u \cdot n = -\pl_{\tau}(u \cdot n) + u \cdot \pl_{\tau}n= u \cdot \pl_{\tau}n,\nonumber\\[2mm]
    		& (\nabla^{\bot} u_k n_k) \cdot \tau = \pl_n u \cdot n = \mbox{div} u - \pl_{\tau} u \cdot \tau = \mbox{div} u - \pl_{\tau} (u \cdot n^{\bot}) + u \cdot \pl_{\tau} n^{\bot}\nonumber\\[2mm]
    		& = \mbox{div} u - \pl_{\tau} (u \cdot n^{\bot}) + \frac{1}{2}(u \cdot n^{\bot})\pl_{\tau} |n^{\bot}|^2 = \mbox{div} u - \pl_{\tau} (u \cdot n^{\bot}),
    	\end{align}
    	where we used $u = (u \cdot n^{\bot})n^{\bot}$ along the boundary and $|n^{\bot}|^2=1$.
    	
    	In the third line of \eqref{13-13-1}, we have also used \eqref{13-12-1}, since the term $\int_{\pl \Omega} \mu \omega \tilde{u}_{\tau}\mbox{div} u$ has already been estimated in \eqref{13-12-1}.

    	Substituting the estimate for $\tilde{J}_1$ and \eqref{13-13} for $\tilde{J}_2$ into \eqref{13-1} gives
    	\begin{align}\label{13-14}
    		& \frac{1}{2} \frac{d}{dt} \int_{\Omega} \rho |\tilde{u}|^2 \mbox{d}x + \mu \int_{\Omega}( |\mbox{div}\tilde{u}|^2+ | \mbox{curl}\tilde{u}|^2 ) \mbox{d}x \nonumber\\[2mm]
    		& \leq - \frac{d}{dt}\int_{\partial\Omega} F(u \cdot \nabla n \cdot u) \mbox{d}s + C\epsilon\|\nabla \tilde{u}\|_{L^2}^2 + C(\epsilon)( 1+\|\sqrt{\rho}\tilde{u}\|_{L^2}^2)^2,
    	\end{align}
    	where we have used
    	\begin{equation}\label{13-15}
    		\|\nabla u\|_{L^4}^4 \leq C\|\sqrt{\rho} \tilde{u}\|_{L^2}^2 +C,
    	\end{equation}
    	which follows from Lemma \ref{lem5} and Lemma \ref{lem11}. Combining this with Lemma \ref{lem12} and choosing $\epsilon$ sufficiently small, we obtain
    	\begin{equation}\label{13-16}
    		\frac{d}{dt} \int_{\Omega} \rho |\tilde{u}|^2 \mbox{d}x + C_2^{-1}\mu \int_{\Omega}( |\mbox{div}\tilde{u}|^2+ | \mbox{curl}\tilde{u}|^2 ) \mbox{d}x\leq - \frac{d}{dt}\int_{\partial\Omega} 2F(u \cdot \nabla n \cdot u) \mbox{d}s +C( 1+\|\sqrt{\rho}\tilde{u}\|_{L^2}^2)^2.
    	\end{equation}

    	Multiplying \eqref{13-16} by $\sigma$ and using \eqref{6-8} to estimate $\int_{\partial\Omega} F(u \cdot \nabla n \cdot u)\mbox{d}s$, together with Lemma \ref{lem11} and Gronwall's inequality, yields \eqref{lem13-1}.
    	
    	Finally, it follows from Lemma \ref{lem5}, Lemma \ref{lem11}, and \eqref{lem13-1} that for $p \geq 2$,
    	\begin{equation}\label{13-17}
    		\sigma \|\nabla u\|_{L^p}^2 \leq C(p)\sigma\|\sqrt{\rho}\tilde{u}\|_{L^2}^2+ C(p) \leq C(p),
    	\end{equation}
    	which completes the proof of \eqref{lem13-2}.

    \end{proof}

    \begin{lem}\label{lem14}
    	Assume that \eqref{betagamma} holds. Then for any $p>2$, there is a positive constant $C$ depending only on $\Omega,p,T,\mu,\beta, \gamma$, $\|b(x)\|_{L^{\infty}}$, $\|\rho_0\|_{L^{\infty}}$, and $\|u_0\|_{H^1}$ such that
    	\begin{align}\label{lem14-1}
    		& \int_{0}^{T}( \|\nabla F\|_{L^p} + \|\nabla \omega\|_{L^p}+ \|\rho \tilde{u}\|_{L^p})^{1+\frac{1}{p}} \mbox{d}t + \int_{0}^{T} ( \|F\|_{L^{\infty}}^{\frac{3}{2}}+ \|\omega\|_{L^{\infty}}^{\frac{3}{2}} \mbox{d}t )\nonumber\\[2mm]
    		&+ \int_{0}^{T}t( \|\nabla F\|_{L^p}^2 + \|\nabla \omega\|_{L^p}^2 + \|\tilde{u}\|^2_{H^1} ) \mbox{d}t \leq C,
    	\end{align}
    	and that
    	\begin{equation}\label{lem14-2}
    		\inf_{(x,t)\in \Omega \times (0,T)} \rho(x,t) \geq C^{-1} \inf_{x\in \Omega } \rho_0 (x).
    	\end{equation}
    \end{lem}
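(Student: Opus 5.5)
We follow the scheme of \cite{HL2016,FLL2022}, using that by Lemma \ref{lem11} all quantities are now controlled by $\|\sqrt\rho\tilde u\|_{L^2}$ and $\|\nabla\tilde u\|_{L^2}$. The first step is to apply the elliptic estimate \eqref{5-3} with $k=0$ to the Neumann problem \eqref{5-2} for $F$ and the Dirichlet problem \eqref{omega} for $\omega$. Using $\|\rho\|_{L^\infty}\le C$, this gives
\[
\|\nabla F\|_{L^p}+\|\nabla\omega\|_{L^p}\le C(\|\rho\tilde u\|_{L^p}+\|\nabla u\|_{L^p}).
\]
The boundary data $\rho(u\cdot\nabla)n\cdot u$ and $\mu\partial_\tau(Au_\tau)$ are of lower order, and their contribution is absorbed into $\|\nabla u\|_{L^p}$ and $\|u\|_{W^{1,p}}$. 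We then bound $\|\nabla u\|_{L^p}$ by Lemma \ref{lem5}, which reduces to $C(\|\sqrt\rho\tilde u\|_{L^2}+1)$ since $R_T\le C$. For $\|\rho\tilde u\|_{L^p}$ we interpolate between $L^2$ and $H^1$ via Lemma \ref{lem-poincare} and Lemma \ref{lem12}:
\[
\|\rho\tilde u\|_{L^p}\le C\|\sqrt\rho\tilde u\|_{L^2}^{2/p}\bigl(\|\nabla\tilde u\|_{L^2}+\|\nabla u\|_{L^2}^2+1\bigr)^{1-2/p}+C\|\sqrt\rho\tilde u\|_{L^2}+C.
\]
The $\|\nabla\Phi\|_{L^p}$ part of $\tilde u$ is bounded by Lemma \ref{lem2}. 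For the strict interpolation we use $\tilde u=\dot u-\nabla\Phi$ and apply the estimate to $\dot u$, whose normal trace is controlled by \eqref{12-3}.

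Next comes the time integrability. Lemma \ref{lem11} gives $\int_0^T\|\sqrt\rho\tilde u\|_{L^2}^2\,dt\le C$, and Lemma \ref{lem13} gives $\int_0^T\sigma\|\nabla\tilde u\|_{L^2}^2\,dt\le C$ together with $\sup\sigma\|\sqrt\rho\tilde u\|^2_{L^2}\le C$.
\begin{itemize}
\item For $t\ge1$ we have $\sigma=1$, so $\int t(\|\nabla F\|_{L^p}^2+\|\nabla\omega\|_{L^p}^2+\|\tilde u\|_{H^1}^2)\,dt$ is bounded directly on $[1,T]$. On $(0,1)$ it is controlled since $t=\sigma$.
\item For the exponent $1+\frac1p$ we split $\int_0^T=\int_0^\sigma$. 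Writing $(\|\sqrt\rho\tilde u\|^{2/p}\|\nabla\tilde u\|^{1-2/p})^{1+1/p}$, we insert the weight $t^{\pm}$ and apply Hölder. The key point is the exponent count: the integrand is bounded by $t^{-\frac12(1+\frac1p)}(t\|\nabla\tilde u\|^2)^{\cdots}(\cdots)$, with total time singularity $t^{-a}$, $a=\frac{(p-1)(p+1)}{2p^2}+\frac{1}{2p}\cdot\ldots<1$. This is integrable; the computation is exactly as in \cite{HL2016}.
\end{itemize}

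For the $L^\infty$ bounds on $F$ and $\omega$, we use the Gagliardo–Nirenberg inequality
\[
\|F\|_{L^\infty}\le C\|F\|_{L^2}^{\frac{p-2}{2p-2}}\|\nabla F\|_{L^p}^{\frac{p}{2p-2}}+C\|F\|_{L^2}.
\]
Choosing, say, $p=4$, the resulting power $\frac32\cdot\frac{p}{2p-2}$ is at most $1+\frac1p$ after Hölder with the bounded $\|F\|_{L^2}$ (from Lemma \ref{lem11}), or alternatively splits into the weighted $t$-integral and the $t^{-1/2}$-type integrable singularity. Then $\int_0^T\|F\|_{L^\infty}^{3/2}\,dt\le C$, and the same holds for $\omega$, whose boundary value $-Au\cdot n^\perp$ is bounded in $H^1$.

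The lower bound \eqref{lem14-2} comes from the same identity as \eqref{11-1}, read in the other direction:
\[
\frac{D}{Dt}\theta(\rho)=-F-P\ge-\|F\|_{L^\infty}-C.
\]
Integrating along particle paths gives $\theta(\rho(x,t))\ge\theta(\rho_0(x_0))-C$, using $\int_0^T\|F\|_{L^\infty}\,dt\le C$ from the $L^{3/2}$ bound. Since $\theta(\rho)=2\mu\log\rho+\frac1\beta(\rho^\beta-1)$ and $\rho$ is bounded above, this gives $\log\rho\ge\log\rho_0-C$, i.e.\ $\rho\ge C^{-1}\inf\rho_0$.

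The main obstacle is the first step: the boundary contributions in the $W^{1,p}$ estimate for $F$, notably the tangential term $\mu\partial_\tau(Au_\tau)$, must be treated as a divergence-form datum so that only $\|u\|_{W^{1,p}}$ appears. Getting the Hölder exponents in time to close at exactly $1+\frac1p$ is the other delicate point.
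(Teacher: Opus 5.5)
Your strategy is the paper's own, the Huang--Li scheme: \eqref{5-3} reduces $\|\nabla F\|_{L^p}+\|\nabla\omega\|_{L^p}$ to $\|\rho\tilde u\|_{L^p}+\|\nabla u\|_{L^p}$. Then come an interpolation of $\|\rho\tilde u\|_{L^p}$ combined with the weighted bounds of Lemma \ref{lem13}, and Gagliardo--Nirenberg for the $L^\infty$ bounds (your check $\frac{3p}{4(p-1)}\le 1+\frac1p$ is fine). Finally, transport along particle paths gives \eqref{lem14-2}; using $\theta(\rho)$ there is equivalent to integrating $\frac{D}{Dt}\log\rho=-\mbox{div}u$ with $|\mbox{div}u|\le C(|F|+1)$.

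However, the interpolation inequality you display, with exponent exactly $2/p$ on $\|\sqrt\rho\tilde u\|_{L^2}$, is not valid. Lemma \ref{lem-poincare} applied to $\tilde u$ produces $\|\tilde u\|_{L^2}^{2/p}$, and with vacuum this is not controlled by $\|\sqrt\rho\tilde u\|_{L^2}$. Applying it to $\rho\tilde u$ would need $\nabla\rho$, which is unavailable: the constant may not depend on $\|\nabla\rho_0\|$. In fact the homogeneous form $\|\rho v\|_{L^p}\le C\|\sqrt\rho v\|_{L^2}^{2/p}\|v\|_{H^1}^{1-2/p}$ is false. Take an unbounded $v\in H^1$ in two dimensions and $\rho=\mathbf{1}_{E_M}$ with $E_M=\{M\le v\le 2M\}$; the ratio of the two sides is at least $cM^{1-2/p}\to\infty$. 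The lower-order terms in your display do not rescue it, as one sees by scaling $\tilde u$. Getting exactly $2/p$ would require $\tilde u\in L^\infty$.

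The fix is what the paper does: interpolate by H\"older between $L^2$ and $L^{p^2}$,
\[
\|\rho\tilde u\|_{L^p}\le\|\rho\tilde u\|_{L^2}^{\theta}\,\|\rho\tilde u\|_{L^{p^2}}^{1-\theta},\qquad \theta=\frac{2(p-1)}{p^2-2}<\frac{2}{p},
\]
and bound $\|\rho\tilde u\|_{L^{p^2}}\le C\|\tilde u\|_{H^1}\le C(\|\nabla\tilde u\|_{L^2}+1)$ via Lemma \ref{lem12} and Lemma \ref{lem11}. Then $(1-\theta)(1+\frac1p)=\frac{(p-2)(p+1)}{p^2-2}<1$. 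Young's inequality on the three factors $t\|\nabla\tilde u\|_{L^2}^2$, $\|\sqrt\rho\tilde u\|_{L^2}^2$ and a power of $t$ leaves the weight $t^{-1+\frac{2}{p^3-p^2-2p+2}}$, which is integrable near $t=0$. This is the precise version of your unspecified exponent $a$, and with this change the rest of your argument closes.

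One smaller correction: the datum $\rho(u\cdot\nabla)n\cdot u$ cannot be absorbed into $\|\nabla u\|_{L^p}$, since $\rho$ has no trace. It equals $\rho\tilde u\cdot n$ and belongs to the divergence-form datum $\int_\Omega\rho\tilde u\cdot\nabla\varphi\,dx$. That is why \eqref{5-3} only involves $\|\rho\tilde u\|_{L^p}$ and $\|\nabla u\|_{L^p}$.
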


    \begin{proof}

    	First, using Lemma \ref{lem12}, the bound for $\|\rho\|_{L^{\infty}}$ from Lemma \ref{lem11}, and Lemma \ref{lem-poincare} to estimate $\|\rho \tilde{u}\|_{L^p}$, we obtain
    	\begin{align}\label{14-1}
    		\|\rho \tilde{u}\|_{L^p} & \leq C \|\rho \tilde{u}\|_{L^2}^{\frac{2(p-1)}{p^2-2}}\|\tilde{u}\|_{H^1}^{\frac{p(p-2)}{p^2-2}}\leq C \|\rho \tilde{u}\|_{L^2} + C \|\rho \tilde{u}\|_{L^2}^{\frac{2(p-1)}{p^2-2}}\|\nabla \tilde{u}\|_{L^2}^{\frac{p(p-2)}{p^2-2}},
    	\end{align}
    	which, together with Lemma \ref{lem13} and \eqref{13-15}, implies that
    	\begin{align}\label{14-2}
    		& \int_{0}^{T} ( \|\rho \tilde{u}\|_{L^p}^{1+\frac{1}{p}} + t \|\tilde{u}\|^2_{H^1} ) \mbox{d}t \nonumber\\[2mm]
    		& \leq C \int_{0}^{T}( \|\sqrt{\rho}\tilde{u}\|_{L^2}^2 + t\|\nabla \tilde{u}\|_{L^2}^2 + t^{-1+\frac{2}{p^3-p^2-2p+2}} )\mbox{d}t \leq C.
    	\end{align}

    	The Gagliardo-Nirenberg inequality, \eqref{5-4}, and Lemma \ref{lem11} then yield
    	\begin{align}\label{14-3}
    		\|F\|_{L^{\infty}} + \|\omega\|_{L^{\infty}} & \leq C(\|F\|_{L^2}+ \|\omega\|_{L^2} + \|F\|_{L^2}^{\frac{p-2}{2(p-1)}}\|\nabla F\|_{L^p}^{\frac{p}{2(p-1)}}+ \|\omega\|_{L^2}^{\frac{p-2}{2(p-1)}}\|\nabla \omega\|_{L^p}^{\frac{p}{2(p-1)}})\nonumber\\[2mm]
    		& \leq C(\|\rho \tilde{u}\|_{L^p}^{\frac{p}{2(p-1)}}+1),
    	\end{align}
    	which, together with \eqref{14-2}, gives \eqref{lem14-1}.
    	
    	Finally, \eqref{lem14-2} is a direct consequence of \eqref{lem14-1} and $\eqref{origin}^1$.

    \end{proof}

    Having established the lower order a priori estimates, we turn to the higher order estimates. Since no additional boundary terms need to be controlled, we follow \cite{HL2016} to derive the remaining a priori estimates. For completeness, we outline the proof.

    \begin{lem}\label{lem15}
    	Assume that \eqref{betagamma} holds. Then, for $q>2$ as in Theorem \ref{thm1}, there is a constant $\tilde{C}$ depending only on $\Omega,T,q,\mu,\beta, \gamma$, $\|b(x)\|_{W^{1,q}}$, $\|\rho_0\|_{W^{1,q}}$, and $\|u_0\|_{H^1}$ such that
    	\begin{align}\label{lem15-1}
    		& \sup_{0 \leq t \leq T} ( \|\rho \|_{W^{1,q}}+ t\|u\|_{H^2}^2 )+ \int_{0}^{T}( \|\nabla^2 u\|_{L^q}^{1+\frac{q}{1}} + t\|\nabla^2 u\|_{L^q}^2+ t\|u_t\|^2_{H^1}) \mbox{d}t \leq \tilde{C}.
    	\end{align}
    \end{lem}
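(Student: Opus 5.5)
We follow the Huang--Li scheme \cite{HL2016}: a log-type Gronwall argument for $\nabla\rho$ driven by the Beale--Kato--Majda inequality (Lemma \ref{lem-nablau}), and then time-weighted $H^2$ estimates for $u$ from the elliptic system \eqref{5-2}--\eqref{omega}. Throughout we use the bounds already obtained: $\|\rho\|_{L^\infty}\le C$ (Lemma \ref{lem11}), $\|\nabla\Phi\|_{L^p}+\|\Delta\Phi\|_{L^p}\le C$ (Lemma \ref{lem2}), and Lemmas \ref{lem13}--\ref{lem14}. The only change from the Navier--Stokes case is that $\rho\dot u$ is replaced by $\rho\tilde u$ and the force $\rho\nabla\Phi$ is added. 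Since $\rho\in L^\infty$, the quantity $\|\nabla^2\Phi\|_{L^q}$ is controlled by $\|\rho\|_{L^q}+\|b\|_{L^q}$ through Neumann elliptic regularity, so $\nabla\Phi$ behaves like a lower-order source.

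\textbf{Step 1: gradient of the density.} We differentiate the mass equation, multiply by $|\nabla\rho|^{q-2}\nabla\rho$ and integrate; the boundary term vanishes because $u\cdot n=0$. This gives
\[
\frac{d}{dt}\|\nabla\rho\|_{L^q}\le C(1+\|\nabla u\|_{L^\infty})\|\nabla\rho\|_{L^q}+C\|\nabla^2u\|_{L^q}.
\]
To control $\|\nabla^2u\|_{L^q}$ we write $\mathrm{div}\,u=(F+P)/(2\mu+\lambda)$ and use Lemma \ref{lem-div-curl} in its $W^{1,q}$ form. The boundary data for $\omega$ and $u\cdot n$ are harmless. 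This yields
\[
\|\nabla^2u\|_{L^q}\le C\big(\|\nabla F\|_{L^q}+\|\nabla\omega\|_{L^q}+(\|F\|_{L^\infty}+1)\|\nabla\rho\|_{L^q}+\|\nabla u\|_{L^q}\big).
\]
Next we apply Lemma \ref{lem-nablau}. Because $\lambda(\rho)$ is bounded, $\|\mathrm{div}\,u\|_{L^\infty}+\|\omega\|_{L^\infty}\le C(\|F\|_{L^\infty}+\|\omega\|_{L^\infty}+1)$. With this bound, $\|\nabla u\|_{L^\infty}$ is controlled by $(1+\|F\|_{L^\infty}+\|\omega\|_{L^\infty})\log(e+\|\nabla\rho\|_{L^q}+\|\nabla F\|_{L^q}+\|\nabla\omega\|_{L^q})$. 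For $f=e+\|\nabla\rho\|_{L^q}$ we then obtain
\[
\frac{d}{dt}\log f\le C\,g(t)\,(1+\log f),
\]
where $g=1+\|F\|_{L^\infty}+\|\omega\|_{L^\infty}+\|\nabla F\|_{L^q}+\|\nabla\omega\|_{L^q}+\|\rho\tilde u\|_{L^q}$. By \eqref{5-3}, $\|\nabla F\|_{L^q}+\|\nabla\omega\|_{L^q}\le C(\|\rho\tilde u\|_{L^q}+\|\nabla u\|_{L^q})$. By \eqref{lem14-1}, $g\in L^1(0,T)$, since $L^{1+1/q}$ and $L^{3/2}$ in time both embed into $L^1$. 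Gronwall's inequality therefore bounds $\log\log f$, which gives $\sup_t\|\rho\|_{W^{1,q}}\le\tilde C$. Substituting back, $\int_0^T\|\nabla^2u\|_{L^q}^{1+1/q}dt\le\tilde C$. This is the intended exponent; the display $1+\frac q1$ in the statement is presumably a typo for $1+\frac1q$.

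\textbf{Step 2: weighted estimates.} Lemma \ref{lem-div-curl} with $s=2$, together with \eqref{5-4}, gives
\[
\|u\|_{H^2}\le C(\|\nabla F\|_{L^2}+\|\nabla\omega\|_{L^2}+\|\nabla\rho\|_{L^q}\|\nabla u\|_{L^{2q/(q-2)}}+1)\le C(\|\sqrt\rho\tilde u\|_{L^2}+1)+C\|\nabla u\|_{L^{p}}.
\]
The weighted bounds $\sigma\|\sqrt\rho\tilde u\|_{L^2}^2\le C$ from Lemma \ref{lem13} and $\sigma\|\nabla u\|_{L^p}^2\le C$ from \eqref{lem13-2} then give $t\|u\|_{H^2}^2\le\tilde C$, first on $(0,1]$ and trivially afterwards. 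Similarly, $t\|\nabla^2u\|_{L^q}^2$ is bounded by $t(\|\rho\tilde u\|_{L^q}^2+\|\nabla\rho\|_{L^q}^2\|F\|_{L^\infty}^2+1)$. This is integrable because $t\|\tilde u\|_{H^1}^2\in L^1$ by \eqref{lem14-1}, and because $t\|F\|_{L^\infty}^2\le C t\|\rho\tilde u\|_{L^q}^{q/(q-1)}+C$ by \eqref{14-3}. Finally, $u_t=\tilde u+\nabla\Phi-u\cdot\nabla u$, so
\[
t\|\nabla u_t\|_{L^2}^2\le Ct(\|\nabla\tilde u\|_{L^2}^2+\|\nabla^2\Phi\|_{L^2}^2+\|u\|_{L^\infty}^2\|\nabla^2u\|_{L^2}^2+\|\nabla u\|_{L^4}^4).
\]
Each term is integrable by the preceding bounds, together with $\|u\|_{L^\infty}\le C\|u\|_{H^2}^{1/2}$ and $\|\nabla^2\Phi\|_{L^2}\le C(\|\rho\|_{L^2}+1)$.

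The main obstacle is Step 1, namely closing the log-Gronwall loop. We must make sure that Lemma \ref{lem-nablau} applies with the Navier-slip data, and that the coupling term $\|F\|_{L^\infty}\|\nabla\rho\|_{L^q}$ appears only linearly with an $L^1$-in-time coefficient. If the $L^{3/2}$ integrability of $\|F\|_{L^\infty}$ proves insufficient, we would instead bound $\|F\|_{L^\infty}$ through \eqref{14-3} by $\|\rho\tilde u\|_{L^q}^{q/(2(q-1))}$ and use \eqref{lem14-1} directly.
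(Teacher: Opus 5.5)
Your proposal is correct and is essentially the paper's argument. Both run a transport estimate for the density gradient, close it with the Beale--Kato--Majda inequality (Lemma \ref{lem-nablau}) and a log-Gronwall argument, and then derive the time-weighted $H^2$ and $u_t$ bounds from \eqref{lem13-2} and \eqref{lem14-1}.

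The only structural difference is the quantity being transported. The paper works with $\Psi=(2\mu+\lambda(\rho))\nabla\rho$, so that $\rho\nabla F$ rather than $\rho\nabla\mbox{div}u$ appears in the transport equation. Your direct differentiation of the mass equation instead produces the extra term $(\|F\|_{L^\infty}+1)\|\nabla\rho\|_{L^q}$, which is harmless.

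One small correction to the log-Gronwall step. Expanding $\log(e+\|\nabla^2u\|_{L^q})$ does not literally give $\frac{d}{dt}\log f\le Cg(1+\log f)$. A cross term $(1+\|F\|_{L^\infty}+\|\omega\|_{L^\infty})\log(e+\|\rho\tilde u\|_{L^q}+\|\nabla u\|_{L^q}+\|F\|_{L^\infty})$ remains. It is integrable, but not from $g\in L^1$ alone: you need integrability beyond $L^1$, e.g.\ $\|F\|_{L^\infty}+\|\omega\|_{L^\infty}\in L^{3/2}(0,T)$ or $\|\rho\tilde u\|_{L^q}\in L^{1+1/q}(0,T)$. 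This is precisely what the paper's source term $\tilde C\|\rho\tilde u\|_{L^q}^{1+1/q}$ in \eqref{15-7} accounts for.
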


    \begin{proof}
    	
    	First, set $\Psi=(\Psi_1, \Psi_2)$, where $\Psi_i := (2\mu +\lambda(\rho))\pl_i \rho(i=1,2)$. It follows from $\eqref{origin}^1$ that $\Psi_i$ satisfies
    	\begin{equation}\label{15-1}
    		\pl_t \Psi_i + ( u \cdot \nabla )\Psi_i + ( 2\mu +\lambda(\rho))\nabla \rho \cdot \pl_i u+ \rho \pl_i F+ \rho \pl_i P +\Psi_i \mbox{div}u=0.
    	\end{equation}
    	
    	For $q>2$, multiplying \eqref{15-1} by $|\Psi|^{q-2}\Psi_i$ and integrating over $\Omega$ gives
    	\begin{align}\label{15-2}
    		& \int_{\Omega} \frac{1}{q} \frac{\pl }{\pl t} |\Psi|^q + \frac{1}{q}(u \cdot \nabla)|\Psi|^q + (2\mu +\lambda(\rho))(\Psi \cdot \nabla)u \cdot \nabla \rho |\Psi|^{q-2}\nonumber\\[2mm]
    		& + \rho \Psi \cdot \nabla F|\Psi|^{q-2}+ \rho \Psi \cdot \nabla P|\Psi|^{q-2} + \mbox{div}u|\Psi|^q \mbox{d}x=0,
    	\end{align}
    	Integrating by parts and using the boundary condition \eqref{boundary1} to eliminate the boundary term, we obtain
    	\begin{equation}\label{15-3}
    		\frac{d}{dt} \|\Psi\|_{L^q} \leq \tilde{C}( 1+\|\nabla u\|_{L^{\infty}} )\|\Psi\|_{L^q} + \tilde{C}\|\nabla F\|_{L^q},
    	\end{equation}
    	where we used the estimates for $\rho$ (Lemma \ref{lem2}).
    	
    	Next, using the standard $L^p$ estimate for the elliptic system with boundary condition \eqref{boundary1}, together with \eqref{14-3} for $\|\mbox{div} u\|_{L^{\infty}}$ and \eqref{5-4} for $\|\nabla F\|_{L^q}$ and $\|\nabla \omega\|_{L^q}$, we obtain
    	\begin{align}\label{15-4}
    		\|\nabla^2 u\|_{L^q} & \leq \tilde{C}( \|\nabla \mbox{div}u\|_{L^q} + \|\nabla \omega\|_{L^q} )\nonumber\\[2mm]
    		& \leq \tilde{C}( \|\nabla(2\mu +\lambda(\rho))\mbox{div}u\|_{L^q} + \|\mbox{div}u\|_{L^{\infty}}\|\nabla \rho\|_{L^q} + \|\nabla \omega\|_{L^q} )\nonumber\\[2mm]
    		& \leq \tilde{C}( \|\mbox{div}u\|_{L^{\infty}} +1)\|\nabla \rho\|_{L^q} + \tilde{C}\|\nabla F\|_{L^q}+ \tilde{C}\|\nabla \omega\|_{L^q}\nonumber\\[2mm]
    		& \leq \tilde{C}(\|\rho \tilde{u}\|_{L^q}+1)\|\nabla \rho\|_{L^q} + \tilde{C}\|\rho \tilde{u}\|_{L^q}.
    	\end{align}
    	
    	Lemma \ref{lem-nablau} and the preceding estimate then give
    	\begin{align}\label{15-5}
    		\|\nabla u\|_{L^{\infty}} & \leq \tilde{C}( \|\mbox{div}u\|_{L^{\infty}} + \|\omega\|_{L^{\infty}} )\log (e+\|\nabla^2 u \|_{L^q}) + \tilde{C}\|\nabla u\|_{L^2} + \tilde{C}\nonumber\\[2mm]
    		& \leq \tilde{C}(1+\|\rho \tilde{u}\|_{L^q})\log (e+\|\nabla \rho\|_{L^q}+ \|\rho \tilde{u}\|_{L^q}) + \tilde{C}.
    	\end{align}
    	
    	Noting that
    	\begin{equation}\label{15-6}
    		2\mu \|\nabla \rho\|_{L^q} \leq \|\Psi\|_{L^q} \leq \tilde{C}\|\nabla \rho\|_{L^q},
    	\end{equation}
    	we substitute \eqref{15-5} into \eqref{15-3} and use \eqref{5-4} to estimate $\|\nabla F\|_{L^q}$, obtaining
    	\begin{equation}\label{15-7}
    		\frac{d}{dt} \log (e+\|\Psi\|_{L^q})= \frac{\frac{d}{dt}\|\Psi\|_{L^q}}{e+\|\Psi\|_{L^q}} \leq \tilde{C}(1+\|\rho \tilde{u}\|_{L^q})\log (e+\|\Psi\|_{L^q}) + \tilde{C}\|\rho \tilde{u}\|_{L^q}^{1+\frac{1}{q}},
    	\end{equation}
    	which, together with Gronwall's inequality, \eqref{5-4}, Lemma \ref{lem14}, and \eqref{15-6}, yields
    	\begin{equation}\label{15-8}
    		\sup_{0 \leq t \leq T}\|\nabla \rho\|_{L^q} \leq \tilde{C}.
    	\end{equation}
    	
    	Combining \eqref{15-4}, \eqref{15-8}, and Lemma \ref{lem14} gives
    	\begin{equation}\label{15-9}
    		\int_{0}^{T}( \|\nabla^2u \|_{L^q}^{1+\frac{1}{q}} + t \|\nabla^2 u\|_{L^q}^2 )\mbox{d}t \leq \tilde{C}.
    	\end{equation}
    	
    	Finally, using Lemma \ref{lem12} for $\|\tilde{u}\|_{H^1}$, \eqref{13-15} for $\|\nabla u\|_{L^4}$, Lemma \ref{lem2} for $\|\nabla \Phi\|_{H^1}\leq \|\rho\|_{L^2}+\|\nabla \Phi\|_{L^2}\leq C$, Lemma \ref{lem11} for $\sup_{0 \leq t \leq T}\|u\|_{H^1}$, Lemma \ref{lem13} for $t\|\nabla \dot{u}\|$, and \eqref{15-9}, we obtain
    	\begin{align}\label{15-10}
    		\int_{0}^{T} t \|u_t\|^2_{H^1} \mbox{d}t & \leq \tilde{C} \int_{0}^{T}t( \|\tilde{u}\|_{H^1}^2 + \|u \cdot \nabla u\|_{H^1}^2 + \|\nabla \Phi\|_{H^1}^2)\mbox{d}t \nonumber\\[2mm]
    		& \leq \tilde{C} \int_{0}^{T}t( 1+ \|\nabla \dot{u}\|_{L^2}^2 + \|\nabla u\|_{L^4}^4 + \|u\|^2_{H^1}\|\nabla u\|_{L^q}^2 )\mbox{d}t \leq \tilde{C}.
    	\end{align}
    	
    	Using Lemma \ref{lem11} for $\sup_{0 \leq t \leq T}\|\nabla u\|_{L^2}$, \eqref{5-4} for $\|\nabla F\|_{L^2} + \|\nabla \omega\|_{L^2}$, and \eqref{15-8}, we obtain
    	\begin{align}\label{15-11}
    		\|\nabla^2 u\|_{L^2} & \leq \tilde{C}( \|\nabla \mbox{div}u\|_{L^2} + \|\nabla \omega\|_{L^2} )\nonumber\\[2mm]
    		& \leq \tilde{C}( \|\nabla (2\mu +\lambda(\rho))\mbox{div}u\|_{L^2}+ \|\mbox{div}u\|_{L^{\frac{2q}{q-2}}}\|\nabla \rho\|_{L^q} + \|\nabla \omega\|_{L^2} )\nonumber\\[2mm]
    		& \leq \tilde{C}\|\nabla F\|_{L^2} + \tilde{C}\|\nabla \omega\|_{L^2} + \tilde{C} + \tilde{C}\|\nabla u\|_{L^2}^{\frac{q-2}{q}}\|\nabla^2 u\|_{L^2}^{\frac{2}{q}}\nonumber\\[2mm]
    		& \leq \frac{1}{2}\|\nabla^2u\|_{L^2} + \tilde{C} \|\rho \tilde{u}\|_{L^2} + \tilde{C},
    	\end{align}
    	which, together with Lemma \ref{lem13}, gives
    	\begin{equation}\label{15-12}
    		\sup_{0 \leq t \leq T}t \|\nabla^2 u\|^2_{L^2}\leq \tilde{C}.
    	\end{equation}
    	
    	Combining the preceding estimates with Lemma \ref{lem11} yields \eqref{lem15-1} and completes the proof of Lemma \ref{lem15}.

    \end{proof}

    \section{Proofs of Theorems \ref{thm1} and \ref{thm2}}\label{section5}

    With these estimates established, Theorem \ref{thm1} can be proved by adapting the arguments of [\cite{FLL2022}, Theorem 1.1] and [\cite{HL2016}, Theorem 1.1]. We first establish the global existence of a strong solution $(\rho,u,\Phi)$ under the assumptions that \eqref{betagamma} holds and $(\rho_0,u_0,\Phi_0)$ satisfies \eqref{initialvalue}. The proof follows that of [\cite{HL2016}, Proposition 5.1], with minor modifications.

    \begin{prop}\label{prop1}
    	Assume that \eqref{betagamma} holds and that $(\rho_0,u_0,\Phi_0)$ satisfies \eqref{initialvalue}. Then there exists a unique strong solution $(\rho,u ,\nabla \Phi)$ to \eqref{origin}-\eqref{boundary2} in $\Omega\times(0,\infty)$ satisfying \eqref{localexistence} for any $T \in (0,\infty)$. In addition, for any $q>2$, $(\rho,u,\nabla \Phi)$ satisfies \eqref{lem15-1} with a positive constant $C$ depending only on $\Omega,T,q,\mu,\beta, \gamma$, $\|b(x)\|_{W^{1,q}}$, $\|\rho_0\|_{W^{1,q}}$, and $\|u_0\|_{H^1}$.
    \end{prop}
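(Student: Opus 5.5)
The argument is the standard continuation argument based on Lemma \ref{lem-localexistence} and the a priori bounds of Sections \ref{section3}--\ref{section4}. Lemma \ref{lem-localexistence} gives a unique strong solution on some $(0,T_1)$ with the regularity \eqref{localexistence} and the lower bound \eqref{localinf}. Let
\[
T^*:=\sup\{T: \text{a strong solution with the regularity \eqref{localexistence} exists on } \Omega\times(0,T)\},
\]
and suppose, for contradiction, that $T^*<\infty$. On every $(0,T)$ with $T<T^*$, Lemmas \ref{lem1}--\ref{lem15} hold with constants depending only on $T^*$ and the data. This uses that \eqref{initialvalue} implies $\rho_0\in W^{1,q}\cap L^\infty$, $u_0\in H^1$ and $b\in W^{1,q}$ for $q\in(2,6)$ by Sobolev embedding. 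These lemmas give the uniform upper bound of $\rho$ (Lemma \ref{lem11}), the lower bound \eqref{lem14-2} (positive, since $\inf\rho_0>0$), $\sup_t\|\rho\|_{W^{1,q}}$, and the time-weighted bounds \eqref{lem15-1}.

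To restart the local theory at some $t_0$ close to $T^*$, we need bounds on $\|\rho\|_{H^2}$, $\|u\|_{H^2}$ and $\|\nabla\Phi\|_{H^3}$ that are uniform up to $T^*$. We would derive them as in \cite{HL2016}, Proposition 5.1. The first step is the estimate of $\|\nabla^2\rho\|_{L^2}$: differentiate the equation for $\Psi=(2\mu+\lambda)\nabla\rho$ once more and run a Gronwall argument. This needs $\int_0^{T^*}\|\nabla u\|_{L^\infty}\,dt<\infty$, which follows from \eqref{15-5} and \eqref{15-9}. It also needs $\|\nabla^2F\|_{L^2}$, which \eqref{5-3} with $k=1$ controls by $\|\rho\tilde u\|_{H^1}+\|\nabla u\|_{H^1}$. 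In that bound the extra piece $\|\rho\nabla\Phi\|_{H^1}$ is controlled by $\|\rho\|_{H^1}\|\nabla\Phi\|_{W^{1,\infty}}$, and $\|\nabla\Phi\|_{W^{1,\infty}}\le C\|\nabla\Phi\|_{W^{2,q}}$ comes from Neumann elliptic regularity for $\Delta\Phi=\rho-b$ together with $\rho,b\in W^{1,q}$.

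Next we would bound $\sup_t\|\nabla u\|_{H^1}$ on $(t_0,T^*)$ by testing the time derivative of the momentum equation against $u_t$. The only new terms are those involving $\nabla\Phi_t$. From $\Delta\Phi_t=-\mbox{div}(\rho u)$ with $\partial_n\Phi_t=0$ (compatible since $u\cdot n=0$), part (2) of Lemma \ref{elliptic} gives $\|\nabla\Phi_t\|_{L^p}\le C\|\rho u\|_{L^p}$, so these terms are bounded. The time weights $\sigma$ are harmless because $t_0>0$. Finally, $\nabla\Phi\in C([0,T];H^3)$ follows from elliptic regularity for $\Delta\Phi=\rho-b$ once $\rho\in H^2$ is known, assuming $b\in H^2$.

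With all of these norms bounded uniformly, Lemma \ref{lem-localexistence} applied from $t_0$ gives an existence time independent of $t_0$. This extends the solution past $T^*$ and contradicts maximality. Uniqueness follows from the uniqueness part of the local theory on each subinterval. The last claim, that \eqref{lem15-1} holds with constants depending only on $\|\rho_0\|_{W^{1,q}}$, $\|u_0\|_{H^1}$ and $\|b\|_{W^{1,q}}$, is exactly Lemma \ref{lem15}.

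The main obstacle is the $H^2$-level estimate for $\rho$. There the Poisson coupling enters through $\nabla^2(\rho\nabla\Phi)$ in the elliptic bound for $F$, and a boundary term appears in the Neumann problem \eqref{5-2} at one derivative higher. I would handle that boundary term with the $W^{k,p}$ estimate \eqref{5-3}, which needs $\partial\Omega\in C^{2,1}$, and handle $\nabla^2\Phi$ by elliptic regularity, which converts it into $\nabla\rho$.
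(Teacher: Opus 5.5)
Your continuation argument is correct, but it removes the initial-layer time weights differently from the paper.

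\textbf{The paper's route.} The paper integrates \eqref{13-16} over $(0,T)$ without the weight $\sigma$. To do so it uses $u_0\in H^2$, $\inf\rho_0>0$, and the initial value \eqref{p1-5} of $\sqrt{\rho}\tilde u$. This gives \eqref{p1-6} and then \eqref{p1-7}, in particular $\int_0^T\|\nabla^2F\|_{L^2}^2\,dt\le\hat C$. The $\nabla\Psi$-Gronwall \eqref{p1-13} is then run from $t=0$, with constants depending only on the data, $\inf\rho_0$ and $T^*$.

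\textbf{Your route.} You work on $[t_1,T^*)$ for a fixed $t_1>0$. There $\sigma\ge\min\{1,t_1\}$ turns the weighted bounds \eqref{lem13-1}, \eqref{lem14-1} and \eqref{15-12} into unweighted ones. You start the Gronwall from $\|\nabla\Psi(t_1)\|_{L^2}$, which is finite because the local solution lies in $C([0,t_1];H^2)$.

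\textbf{Comparison.} Your approach avoids the initial layer and any use of $\sqrt{\rho}\tilde u(0)$. The price is a non-quantitative bound, depending on the solution at $t_1$. That is all the contradiction with maximality of $T^*$ needs. The paper's approach gives bounds in terms of the data alone.

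\textbf{Points to fix or tidy.}
\begin{itemize}
\item You should state explicitly that the $\nabla^2\rho$-Gronwall starts at $t_1>0$, not at $0$. Starting at $0$ would require $\int_0^{T^*}\|\nabla^2F\|_{L^2}\,dt<\infty$. The weighted bound $\int_0^T t\|\tilde u\|_{H^1}^2\,dt\le C$ does not provide this, since $\int_0^1 t^{-1}\,dt=\infty$.
\item Your separate step of testing the differentiated momentum equation against $u_t$ is unnecessary. Lemma \ref{lem15}, via \eqref{15-12}, already gives $\sup_t t\|\nabla^2u\|_{L^2}^2\le\tilde C$, hence a uniform bound on $\|u\|_{H^2}$ over $[t_1,T^*)$.
\item At this level only $\nabla(\rho\nabla\Phi)$ enters, through \eqref{5-3} with $k=1$, not $\nabla^2(\rho\nabla\Phi)$.
\item In two dimensions $H^2\subset W^{1,q}$ for every finite $q$, so there is no need to restrict to $q<6$.
\end{itemize}
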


    \begin{proof}
    	
    	The standard local existence result, Lemma \ref{lem-localexistence}, ensures that the problem \eqref{origin}-\eqref{boundary2} with initial data $(\rho_0,u_0,\Phi_0)$ has a unique local solution $(\rho,u,\nabla \Phi)$ defined up to a positive time $T_0$, which may depend on $\inf_{x\in \Omega }\rho_0(x)$, and satisfying \eqref{localexistence} and \eqref{localinf}. We set
    	
    	\begin{equation}\label{p1-1}
    		T^*= \sup\{ T| \sup_{0 \leq t \leq T} (\|(\rho,u)\|_{H^2}+\|\nabla \Phi\|_{H^1}) < \infty \}.
    	\end{equation}
    	
    	Clearly, $T^* \geq T_0$. If $T^*< \infty$, we claim that there exists a positive constant $\hat{C}$ which may depend on $b(x)$, $T^*$ and $\inf_{x\in \Omega } \rho_0(x)$ such that, for all $0<T<T^*$,
    	\begin{equation}\label{p1-2}
    		\sup_{0 \leq t \leq T}\|\rho\|_{H^2} \leq \hat{C}.
    	\end{equation}
    	
    	This together with \eqref{lem15-1} contradicts \eqref{p1-1}. Thus, we have
    	\begin{equation}\label{p1-3}
    		T^*=\infty.
    	\end{equation}
    	
    	It remains to prove \eqref{p1-2}. First, standard calculations and \eqref{lem15-1} imply that, for any $T \in (0,T^*)$,
    	\begin{equation}\label{p1-4}
    		\inf_{(x,t)\in \Omega \times (0,T)} \rho(x,t) \geq \inf_{x\in \Omega } \rho_0(x) \exp^{-\int_{0}^{T}\|\mbox{div}u\|_{L^{\infty}}\mbox{d}t} \geq \hat{C}^{-1},
    	\end{equation}
    	Here and below, $\hat{C}$ denotes a generic positive constant depending on $\inf_{x\in \Omega } \rho_0(x)$ and $T^*$ but independent of $T$. In view of \eqref{initialvalue}, we define
    	\begin{equation}\label{p1-5}
    		\sqrt{\rho}\tilde{u}(x,t=0) = \rho_0^{-\frac{1}{2}}( \mu \Delta u_0+ \nabla ((\mu + \lambda(\rho_0))\mbox{div}u_0)- \nabla P(\rho_0)).
    	\end{equation}
    	
    	Integrating \eqref{13-16} with respect to $t$ over $(0,T)$ and using \eqref{initialvalue}, Lemma \ref{lem11}, and \eqref{p1-5}, we obtain
    	\begin{equation}\label{p1-6}
    		\sup_{0 \leq t \leq T} \int_{\Omega} \rho |\tilde{u}|^2 \mbox{d}x + \int_{0}^{T} \|\nabla \dot{u}\|^2_{L^2}\mbox{d}t \leq \hat{C}.
    	\end{equation}

    	Combining this with \eqref{15-11} for $\|\nabla^2 u\|_{L^2}$ and \eqref{lem15-1}, and recalling that $\Delta F= \mbox{div}( \rho \tilde{u})$ and $\mu \Delta \omega= \nabla^{\bot} \cdot (\rho \tilde{u})$, we obtain
    	\begin{align}\label{p1-7}
    		& \sup_{0 \leq t \leq T}( \|\nabla^2 u\|_{L^2} + \|\nabla F\|_{L^2}+ \|\nabla \omega\|_{L^2}) + \int_{0}^{T}( \|\nabla^2 F\|_{L^2}^2 + \|\nabla^2 \omega\|_{L^2}^2 )\mbox{d}t \nonumber\\[2mm]
    		& \leq \hat{C} \sup_{0 \leq t \leq T} \|\rho \tilde{u}\|_{L^2} + \hat{C}\int_{0}^{T} \|\nabla(\rho \tilde{u})\|_{L^2}^2 \mbox{d}t \nonumber\\[2mm]
    		& \leq \hat{C} + \hat{C}\int_{0}^{T}( \|\nabla \rho\|_{L^q}^2\|\tilde{u}\|_{L^{\frac{2q}{q-2}}}^2 + \|\nabla \tilde{u}\|_{L^2}^2 )\mbox{d}t \leq \hat{C} + \hat{C}\int_{0}^{T} \|\tilde{u}\|_{H^1}^2\mbox{d}t \leq \hat{C},
    	\end{align}
    	where we have used \eqref{5-3} with $k=1$ and $p=2$. The additional terms involving $u$ and $\tilde{u}$ are controlled by \eqref{p1-6}, Lemma \ref{lem15}, and \eqref{15-11}.

    	Next, applying $\nabla$ to \eqref{15-1} gives
    	\begin{equation}
    		\pl_t \Psi_i + ( u \cdot \nabla )\Psi_i + ( 2\mu +\lambda(\rho))\nabla \rho \cdot \pl_i u+ \rho \pl_i F+ \rho \pl_i P +\Psi_i \mbox{div}u=0,
    	\end{equation}
    	multiplying the resulting identity by $\nabla \Psi_i$, integrating by parts, and using \eqref{p1-4} and \eqref{p1-7}, we obtain
    	\begin{align}\label{p1-8}
    		\frac{d}{dt}\|\nabla \Psi\|_{L^2} &\leq \hat{C}(1+\|\nabla u\|_{L^{\infty}})( 1+\|\nabla \Psi\|_{L^2}+\|\nabla \rho\|_{L^4}^2+ \|\nabla^2 \rho\|_{L^2} )\nonumber\\[2mm]
    		&+ \hat{C}\| |\nabla \rho||\nabla^2u|\|_{L^2} + \hat{C}\| |\nabla \rho||\nabla F|\|_{L^2} + \hat{C}\|\nabla^2 F\|_{L^2}.
    	\end{align}
    	
    	The identity $\Psi_i=(2\mu +\lambda(\rho))\pl_i\rho$, together with Lemma \ref{lem15}, yields
    	\begin{align}\label{p1-9}
    		\|\nabla^2 \rho\|_{L^2}+\|\nabla \rho\|_{L^4}^2 & \leq \hat{C}\|\nabla \Psi\|_{L^2}+ \hat{C}\|\nabla \rho\|_{L^4}^2 \leq \hat{C}\|\nabla \Psi\|_{L^2}+ \hat{C} \|\nabla \rho\|_{L^{l}}^{\frac{l}{2}}\|\nabla^2 \rho\|_{L^2}^{\frac{4-l}{2}} \nonumber\\[2mm]
    		& \leq \hat{C}\|\nabla\Psi\|_{L^2} + \frac{1}{2}\|\nabla^2 \rho\|_{L^2}+\hat{C},
    	\end{align}
    	where we have chosen $l=\min\{4,q\}$ and used Poincar\'{e}'s inequality (Lemma \ref{lem-poincare}).

    	H$\ddot{o}$lder's inequality, Lemma \ref{lem-poincare}, Lemma \ref{lem15}, and \eqref{p1-7} then give
    	\begin{align}\label{p1-10}
    		\| |\nabla \rho||\nabla^2u|\|_{L^2} + \| |\nabla \rho||\nabla F|\|_{L^2} &\leq \hat{C}\|\nabla \rho\|_{L^q}\|\nabla^2 u\|_{L^2}^{1-\frac{2}{q}}\|\nabla^3 u\|_{L^2}^{\frac{2}{q}} + \hat{C}\|\nabla \rho\|_{L^q}\|\nabla F\|_{L^2}^{1-\frac{2}{q}}\|\nabla^2 F\|_{L^2}^{\frac{2}{q}}\nonumber\\[2mm]
    		& \leq \hat{C}(\epsilon) \epsilon\|\nabla^3 u \|_{L^2} + \hat{C}\|\nabla^2 F\|_{L^2}.
    	\end{align}
    	
    	Moreover, the $L^2$ estimate for the elliptic system gives
    	\begin{align}\label{p1-11}
    		\|\nabla^3 u\|_{L^2} &\leq \hat{C}\|\nabla^2 \mbox{div} u\|_{L^2}+ \hat{C}\|\nabla^2 \omega\|_{L^2}\nonumber\\[2mm]
    		& \leq \hat{C}\|\nabla^2 F\|_{L^2} + \hat{C}\|\nabla^2 \rho\|_{L^2}+ \hat{C}\|\nabla \rho\|_{L^4}^2+ \hat{C}\||\nabla \rho||\nabla^2 u|\|_{L^2} \nonumber\\[2mm]
    		&+ \hat{C}( \|\nabla^2 \rho\|_{L^2} + \|\nabla \rho\|_{L^4}^2 )\|\nabla u\|_{L^{\infty}} + \hat{C}\|\nabla^2 \omega\|_{L^2}. 
    	\end{align}
    	
    	Substituting this and \eqref{p1-9} into \eqref{p1-10} shows
    	\begin{align}\label{p1-12}
    		& \|\nabla^2 \rho\|_{L^2}+\|\nabla \rho\|_{L^4}^2 \leq \hat{C}\|\nabla \Psi\|_{L^2}+ \hat{C}\|\nabla \rho\|_{L^4}^2\nonumber\\[2mm]
    		& \leq \hat{C}\|\nabla^2 F\|_{L^2} + \hat{C}\|\nabla^2 \omega\|_{L^2} + \hat{C}(1+\|\nabla \Psi\|_{L^2})(1+\|\nabla u\|_{L^{\infty}}),
    	\end{align}
    	which together with \eqref{p1-8} gives
    	\begin{equation}\label{p1-13}
    		\frac{d}{dt}\|\nabla \Psi\|_{L^2} \leq \hat{C}\|\nabla^2 F\|_{L^2} + \hat{C}\|\nabla^2 \omega\|_{L^2} + \hat{C}(1+\|\nabla \Psi\|_{L^2})(1+\|\nabla u\|_{L^{\infty}}).
    	\end{equation}
    	
    	This combined with \eqref{p1-7} and Gronwall's inequality yields
    	\begin{equation}\label{p1-14}
    		\sup_{0 \leq t \leq T} \|\nabla \Psi\|_{L^2} \leq \hat{C},
    	\end{equation}
    	which, together with \eqref{p1-9}, implies \eqref{p1-2}. To complete the proof of Proposition \ref{prop1}, we combine the preceding estimates with the relation between $\rho$ and $\Phi$, namely,
    	\begin{equation}\label{p1-15}
    		\Delta \Phi = \rho - b(x),
    	\end{equation}
    	where $b(x) \in H^2$, and apply standard Neumann elliptic regularity with the boundary condition $(\nabla \Phi \cdot n)|_{\pl \Omega}=0$ to obtain
    	\begin{equation}\label{p1-16}
    		\sup_{0 \leq t \leq T}\|\nabla \Phi\|_{H^3}\leq C\sup_{0 \leq t \leq T}\|\Delta \Phi\|_{H^2} \leq C\sup_{0 \leq t \leq T}(\|\rho\|_{H^2}+\|b(x)\|_{H^2}) \leq \hat{C}.
    	\end{equation}

    \end{proof}

    \subsection{Proof of Theorem \ref{thm1}}
    
    The proof is based on Proposition \ref{prop1} and a standard approximation argument. Since the Neumann problem for the Poisson equation requires the compatibility condition
    \[
    \int_{\Omega}\rho_0\,dx=\int_{\Omega}b(x)\,dx,
    \]
    the approximation has to preserve the total charge.
    
    Let $b^\delta\in C^\infty(\overline{\Omega})$ be such that
    \[
    b^\delta>0,
    \qquad
    b^\delta\to b
    \quad\mbox{in } W^{1,q}(\Omega)
    \quad\text{as }\delta\to0.
    \]
    Let $\mathcal S_\delta\rho_0$ be a non-negative smooth approximation of
    $\rho_0$ in $W^{1,q}(\Omega)$ and set
    \[
    \widetilde\rho_0^\delta
    :=\mathcal S_\delta\rho_0+\delta,
    \qquad
    \rho_0^\delta
    :=
    \frac{\displaystyle\int_\Omega b^\delta\,dx}
    {\displaystyle\int_\Omega\widetilde\rho_0^\delta\,dx}
    \widetilde\rho_0^\delta .
    \]
    Then
    \[
    \rho_0^\delta>0,
    \qquad
    \int_\Omega\rho_0^\delta\,dx
    =
    \int_\Omega b^\delta\,dx,
    \]
    and
    \[
    \rho_0^\delta\to\rho_0
    \quad\mbox{in }W^{1,q}(\Omega).
    \]
    We also choose
    \[
    u_0^\delta\in H^2(\Omega)
    \]
    satisfying the boundary condition \eqref{boundary1} such that
    \[
    u_0^\delta\to u_0
    \quad\mbox{in }H^1(\Omega),
    \]
    and put
    \[
    m_0^\delta=\rho_0^\delta u_0^\delta.
    \]
    Finally, let $\Phi_0^\delta$ be the zero-mean solution of
    \[
    \begin{cases}
    	\Delta\Phi_0^\delta=\rho_0^\delta-b^\delta,
    	&x\in\Omega,\\[1mm]
    	\partial_n\Phi_0^\delta=0,
    	&x\in\partial\Omega,\\[1mm]
    	\displaystyle\int_\Omega\Phi_0^\delta\,dx=0.
    \end{cases}
    \]
    By the Neumann elliptic estimates,
    \[
    \nabla\Phi_0^\delta\to\nabla\Phi_0
    \quad\mbox{in }W^{2,q}(\Omega).
    \]
    
    For every fixed $\delta>0$, Proposition \ref{prop1} yields a unique global
    strong solution
    \[
    (\rho^\delta,u^\delta,\Phi^\delta)
    \]
    corresponding to the smooth data
    $(\rho_0^\delta,u_0^\delta,\Phi_0^\delta)$ and the doping profile
    $b^\delta$. Notice that all the a priori estimates established in
    Sections 3 and 4 depend only on the norms appearing in Theorem \ref{thm1},
    and, in particular, are independent of
    \[
    \inf_{x\in\Omega}\rho_0^\delta(x).
    \]
    Moreover, the above approximations may be chosen such that
    \[
    \sup_{\delta>0}
    \left(
    \|\rho_0^\delta\|_{W^{1,q}}
    +\|u_0^\delta\|_{H^1}
    +\|b^\delta\|_{W^{1,q}}
    \right)<\infty.
    \]
    Consequently, the estimates obtained in Sections 3 and 4 are uniform
    with respect to $\delta$. By the standard compactness argument as in
    \cite{FLL2022,HL2016}, passing to a subsequence if necessary,
    we may let $\delta\to0$ and obtain a global strong solution
    $(\rho,u,\nabla \Phi)$ satisfying all the regularity properties stated in
    Theorem \ref{thm1}.
    
    It remains to discuss uniqueness. We follow the argument of
    Germain \cite{G2011}. The only additional term compared with
    the compressible Navier--Stokes system is the Poisson force. Let $(\rho_i,u_i,\Phi_i)$, $i=1,2$, be two solutions with the same initial data, choose the zero-mean representatives \(\Phi_i\), and set
    \[
    \delta\rho=\rho_1-\rho_2,\qquad
    \delta u=u_1-u_2,\qquad
    \delta \Phi= \Phi_1- \Phi_2.
    \]
    Then
    \[
    \begin{cases}
    	\Delta\delta\Phi=\delta\rho,
    	&x\in\Omega,\\
    	\partial_n\delta\Phi=0,
    	&x\in\partial\Omega,\\
    	\displaystyle\int_\Omega\delta\Phi\,dx=0,
    \end{cases}
    \]
    and hence the Neumann elliptic estimate gives
    \[
    \|\nabla\delta\Phi\|_{L^2}
    \le C\|\delta\rho\|_{L^2}.
    \]
    Furthermore,
    \[
    \begin{aligned}
    	\rho_1\nabla\Phi_1-\rho_2\nabla\Phi_2
    	&=
    	\delta\rho\,\nabla\Phi_1
    	+\rho_2\nabla\delta\Phi .
    \end{aligned}
    \]
    Therefore, using the uniform bounds for $\rho_i$ and $\nabla\Phi_i$, we obtain, for every $\varepsilon>0$,
    \[
    \begin{aligned}
    	\left|
    	\int_\Omega
    	(\rho_1\nabla\Phi_1-\rho_2\nabla\Phi_2)
    	\cdot\delta u\,dx
    	\right|
    	\le
    	\varepsilon\|\nabla\delta u\|_{L^2}^2
    	+
    	C_\varepsilon
    	\left(
    	\|\delta\rho\|_{L^2}^2
    	+\|\delta u\|_{L^2}^2
    	\right).
    \end{aligned}
    \]
    Thus the Poisson coupling gives only a lower-order contribution to the
    difference energy inequality and can be absorbed into the same Gronwall argument as in \cite{G2011}. This establishes uniqueness and completes the proof of Theorem \ref{thm1}.

    \subsection{Proof of Theorem \ref{thm2}}
    
   As above, we approximate the initial data and the doping profile by a family of smooth positive approximations that preserve the total charge:
    \[
    (\rho_0^\delta,u_0^\delta,b^\delta)
    \]
    In particular,
    \[
    \rho_0^\delta>0,
    \qquad
    \int_\Omega\rho_0^\delta\,dx
    =
    \int_\Omega b^\delta\,dx,
    \]
    and
    \[
    \rho_0^\delta\to\rho_0\quad\mbox{in }L^p(\Omega)
    \quad\text{for every }1\le p<\infty,
    \]
    while
    \[
    u_0^\delta\to u_0\quad\mbox{in }H^1(\Omega),
    \qquad
    b^\delta\to b\quad\mbox{in }W^{1,q}(\Omega).
    \]
    Let $(\rho^\delta,u^\delta,\nabla \Phi^\delta)$ be the corresponding global
    strong solutions furnished by Theorem \ref{thm1}. The lower-order estimates required for the compactness argument, which are independent of \(\|\rho_0^\delta\|_{W^{1,q}}\), are uniform in \(\delta\). Hence, using the standard compactness arguments in \cite{FLL2022,HL2016}, we may extract a subsequence such that, for every finite $T>0$ and $1\le p<\infty$,
    \[
    \rho^\delta\to\rho
    \quad\mbox{strongly in }C([0,T];L^p(\Omega)),
    \]
    together with the weak and weak-star convergences required to pass to the limit, as in \cite{FLL2022,HL2016}.
    
    It remains only to verify the convergence of the Poisson coupling.
    We normalize
    \[
    \int_\Omega\Phi^\delta(x,t)\,dx
    =
    \int_\Omega\Phi(x,t)\,dx=0.
    \]
    Since
    \[
    \Delta(\Phi^\delta-\Phi)
    =
    (\rho^\delta-\rho)-(b^\delta-b),
    \qquad
    \partial_n(\Phi^\delta-\Phi)=0,
    \]
    the Neumann elliptic estimate implies, for every $1<p<\infty$,
    \[
    \|\nabla(\Phi^\delta-\Phi)\|_{W^{1,p}}
    \le
    C_p
    \left(
    \|\rho^\delta-\rho\|_{L^p}
    +\|b^\delta-b\|_{L^p}
    \right)
    \longrightarrow0.
    \]
    In particular,
    \[
    \nabla\Phi^\delta\to\nabla\Phi
    \quad\mbox{strongly in }
    C([0,T];W^{1,p}(\Omega))
    \]
    for every finite $p$. Consequently,
    \[
    \begin{aligned}
    	\|\rho^\delta\nabla\Phi^\delta
    	-\rho\nabla\Phi\|_{L^1(\Omega)}
    	&\le
    	\|\rho^\delta-\rho\|_{L^2}
    	\|\nabla\Phi^\delta\|_{L^2}
    	+
    	\|\rho\|_{L^2}
    	\|\nabla(\Phi^\delta-\Phi)\|_{L^2}\\
    	&\longrightarrow0
    \end{aligned}
    \]
    uniformly on every bounded time interval. Thus the Poisson force
    passes to the limit in the sense of distributions. All the remaining
    nonlinear terms can be treated exactly as in
    \cite{FLL2022,HL2016}. We therefore obtain a global weak solution
    $(\rho,u,\nabla \Phi)$ satisfying all the properties stated in
    Theorem \ref{thm2}.

	\vspace{2mm}

	{\bf Acknowledgements}
	This research is supported by National Natural Science Foundation of
	China (No. 12221001, No. 12371225, No. 12571242).
	
	{\bf Data availability}  Data sharing is not applicable to this article as no data sets were generated or analysed during the current study.\\
	
	{\bf Declarations}\\

	{\bf Conflict of interest}
	The authors declare that they have no conflict of interest. The authors also declare that this manuscript has not been published previously and will not be submitted elsewhere while it is under consideration.

\end{document}